\newtheorem{theorem}{Theorem}[section]
\newtheorem{lemma}[theorem]{Lemma}
\newtheorem{proposition}[theorem]{Proposition}
\newtheorem{corollary}[theorem]{Corollary}
\newtheorem*{weinsteinconjecture}{Weinstein Conjecture}
\newtheorem*{claim}{Claim}
\newtheorem*{question}{Question}
\theoremstyle{definition}
\newtheorem{definition}[theorem]{Definition}
\newtheorem{example}[theorem]{Example}
\theoremstyle{remark}
\newtheorem{remark}[theorem]{Remark}
\numberwithin{equation}{section}
\newcommand{\eqdef}{\;{:=}\;}
\renewcommand{\frak}{\mathfrak}
\newcommand{\C}{{\mathbb C}}
\newcommand{\Q}{{\mathbb Q}}
\newcommand{\R}{{\mathbb R}}
\newcommand{\Z}{{\mathbb Z}}
\newcommand{\op}{\operatorname}
\newcommand{\dbar}{\overline{\partial}}
\newcommand{\Spinc}{\op{Spin}^c}
\newcommand{\SO}{\op{SO}}
\newcommand{\SU}{\op{SU}}
\newcommand{\U}{\op{U}}
\newcommand{\Spin}{\op{Spin}}
\newcommand{\End}{\op{End}}
\newcommand{\Ker}{\op{Ker}}
\newcommand{\tensor}{\otimes}
\newcommand{\bpm}{\begin{pmatrix}}
\newcommand{\epm}{\end{pmatrix}}
\newcommand{\mc}[1]{{\mathcal #1}}
\begin{document}

\title[Taubes's proof of the Weinstein conjecture]{Taubes's proof of
  the Weinstein conjecture in dimension three}
\author{Michael Hutchings}
\address{Mathematics Department, 970 Evans Hall, University of
  California, Berkeley CA 94720}
\email{hutching@math.berkeley.edu}
\thanks{Partially supported by NSF grant DMS-0806037}

\subjclass[2000]{57R17,57R57,53D40}

\date{}

\begin{abstract}
  Does every smooth vector field on a closed three-manifold, for
  example the three-sphere, have a closed orbit?  No, according to
  counterexamples by K.\ Kuperberg and others.  On the other hand
  there is a special class of vector fields, called Reeb vector
  fields, which are associated to contact forms.  The
  three-dimensional case of the Weinstein conjecture asserts that
  every Reeb vector field on a closed oriented three-manifold has a
  closed orbit.  This conjecture was recently proved by Taubes using
  Seiberg-Witten theory.  We give an introduction to the Weinstein
  conjecture, the main ideas in Taubes's proof, and the bigger picture
  into which it fits.
\end{abstract}

\maketitle

Taubes's proof of the Weinstein conjecture is the culmination of a
large body of work, both by Taubes and by others.  In an attempt to
make this story accessible to nonspecialists, much of the present
article is devoted to background and context, and Taubes's proof
itself is only partially explained.  Hopefully this article will help
prepare the reader to learn the full story from Taubes's paper
\cite{tw}.  More exposition of this subject (which was invaluable in
the preparation of this article) can be found in the online video
archive from the June 2008 MSRI hot topics workshop \cite{msri}, and
in the article by Auroux \cite{auroux}.

Below, in \S\ref{sec:SWC}--\S\ref{sec:CS3D} we introduce the statement
of the Weinstein conjecture and discuss some examples.  In
\S\ref{sec:strategy}--\S\ref{sec:CCH} we discuss a natural strategy
for approaching the Weinstein conjecture, which proves it in many but
not all cases, and provides background for Taubes's work.  In
\S\ref{sec:BigPicture} we give an overview of the big picture
surrounding Taubes's proof of the Weinstein conjecture.  Readers who
already have some familiarity with the Weinstein conjecture may wish
to start here.  In \S\ref{sec:SW3D}--\S\ref{sec:SWF} we recall
necessary material from Seiberg-Witten theory.  In \S\ref{sec:OTP} we
give an outline of Taubes's proof, and in \S\ref{sec:MD} we explain
some more details of it.  To conclude, in \S\ref{sec:BWC} we discuss
some further results and open problems related to the Weinstein
conjecture.

\section{Statement of the Weinstein conjecture}
\label{sec:SWC}

The Weinstein conjecture asserts that certain vector fields must have
closed orbits.  Before stating the conjecture at the end of this
section, we first outline its origins.  This is discussion is only
semi-historical, because only a sample of the relevant works will be
cited, and not always in chronological order.

\subsection{Closed orbits of vector fields}

Let $Y$ be a closed manifold (in this article all manifolds and all
objects defined on them are smooth unless otherwise stated), and let
$V$ be a vector field on $Y$.  A {\em closed orbit\/} of $V$ is a map
\[
\gamma:\R/T\Z\to Y
\]
for some $T>0$, satisfying the ordinary
differential equation
\[
\frac{d\gamma(t)}{dt}=V(\gamma(t)).
\]

Given a closed manifold $Y$, one can ask: Does every vector field on
$Y$ have a closed orbit?  If the Euler characteristic $\chi(Y)\neq 0$,
then by the Poincar\'{e}-Hopf index theorem every vector field on $Y$
has zeroes, which give rise to constant closed orbits.  In this
article we will mainly focus on the case where $Y$ is
three-dimensional.  Then $\chi(Y)=0$, which implies that the signed
count of zeroes of a generic vector field is zero; and it is
relatively easy to cancel these in order to construct a vector field
on $Y$ with no zeroes.  But understanding nonconstant closed orbits is
much harder.

Although for some special three-manifolds such as the 3-torus it is
easy to construct vector fields with no closed orbit, already for
$Y=S^3$ the question of whether all vector fields have closed orbits
is very difficult.  It turns out that the answer is no.  Examples of
vector fields on $S^3$ with no closed orbit, with increasing degrees
of regularity, were constructed by P.\ Schweizer ($C^1$), J.\ Harrison
($C^2$), K.\ Kuperberg ($C^\infty$), G.\ Kuperberg and K.\ Kuperberg
(real analytic), and G.\ Kuperberg ($C^1$ and volume preserving), see
\cite{schweizer,harrison,kkuperberg,kuperbergs,gkuperberg}.  Moreover,
as explained in \cite{kuperbergs}, such vector fields exist on any
3-manifold.  The constructions of these examples involve modifying a
given vector field by inserting a suitable ``plug'' in a neighborhood
of a point on a closed orbit, which destroys that closed orbit without
creating new ones.  The fact that one can do this indicates that to
gain any control over closed orbits, one needs to make some additional
assumption on the vector field.

\subsection{Hamiltonian vector fields}

A very important class of vector fields, originating in classical
mechanics, are Hamiltonian vector fields.  We briefly recall the
definition of these.

Let $(M^{2n},\omega)$ be a symplectic manifold.  This means that
$\omega$ is a closed $2$-form on $M$ such that $\omega^n\neq 0$
everywhere, or equivalently $\omega$ defines a nondegenerate bilinear
form on the tangent space $T_xM$ at each point $x\in M$. The basic
example of a symplectic manifold is $\R^{2n}$ with coordinates
$x_1,\ldots,x_n,y_1,\ldots,y_n$ and the standard symplectic form
\[
\omega_{std}=\sum_{i=1}^ndx_i\wedge dy_i.
\]
We will drop the subscript `$std$' when it is understood.  Darboux's
theorem asserts that any symplectic manifold $(M^{2n},\omega)$ is
locally equivalent to $(\R^{2n},\omega_{std})$.  More precisely, for
any $x\in M$ there exists a neighborhood $U$ of $x$ in $M$, an open
set $V\subset\R^{2n}$, and a symplectomorphism
$\phi:(U,\omega)\stackrel{\simeq}{\to} (V,\omega_{std})$, ie a
diffeomorphism $\phi:U\stackrel{\simeq}{\to} V$ such that
$\phi^*\omega_{std}=\omega$.

If $H:M\to\R$ is a smooth function, the associated {\em
  Hamiltonian vector field\/} is the vector field $X_H$ on $M$
characterized by
\[
\omega(X_H,\cdot) = dH.
\]
The vector field $X_H$ exists and is unique by the nondegeneracy
of $\omega$.

For example, if $M=\R^{2n}$ with the standard symplectic
form, then
\[
X_H = \sum_{i=1}^n\left(\frac{\partial H}{\partial
    y_i}\frac{\partial}{\partial x_i} - \frac{\partial H}{\partial
    x_i}\frac{\partial}{\partial y_i}\right).
\]
So a trajectory $(x(t),y(t))$ of the vector field $X_H$ satisfies the
equations
\[
\frac{d x_i}{dt} = \frac{\partial H}{\partial y_i}, \quad
\quad \frac{d y_i}{d t} = -\frac{\partial H}{\partial x_i}.
\]
These are the equations of classical mechanics, where the $x_i$'s are
position coordinates, the $y_i$'s are momentum coordinates, and $H$ is
the Hamiltonian, or energy function.

Conservation of energy in classical mechanics translates to the fact
that on a general symplectic manifold, the flow of $X_H$ preserves
$H$, because
\[
X_H(H)=dH(X_H)=\omega(X_H,X_H)=0.
\]
Therefore if $E\in\R$ is a regular value of $H$, then the level
set $H^{-1}(E)$ is a $(2n-1)$-dimensional submanifold of $M$, which we
will call a ``regular level set'', and $X_H$ is a smooth vector field
on it.

Must the Hamiltonian vector field $X_H$ have a closed orbit on every
regular level set?  It turns out that under favorable circumstances
the answer is ``almost every''.  For example, suppose $M=\R^{2n}$ with
the standard symplectic form.  Then one has the following:

\begin{theorem}[Hofer-Zehnder, Struwe, 1990]
\label{thm:AEE}
Let $H:\R^{2n}\to\R$ be a proper smooth function.  Then the vector
field $X_H$ has a closed orbit on $H^{-1}(E)$ for almost every
$E\in\R$ such that $H^{-1}(E)\neq\emptyset$.
\end{theorem}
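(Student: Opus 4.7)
The plan is to prove Theorem~\ref{thm:AEE} via the Hofer--Zehnder symplectic capacity, which converts the existence of closed orbits on level sets into a quantitative constraint on the size of bounded domains in $\R^{2n}$. For a bounded open set $U\subset\R^{2n}$, call a smooth compactly supported function $F:\R^{2n}\to[0,\infty)$ with $\op{supp}(F)\subset U$ \emph{admissible} on $U$ if $F$ attains its maximum on a nonempty open subset of $U$ and every nonconstant periodic orbit of $X_F$ has period strictly greater than $1$. Define
\[
c_{HZ}(U)\eqdef\sup\{\max F\mid F\text{ admissible on }U\}.
\]
The deep input, which I would take as a black box, is the Hofer--Zehnder theorem that $c_{HZ}(U)<\infty$ for every bounded open $U\subset\R^{2n}$ (in fact $c_{HZ}(B^{2n}(r))=\pi r^2$), proved via a variational argument for the symplectic action functional on the loop space.

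Granting finiteness, almost-existence follows by a measure-theoretic contradiction due to Struwe. Because $H$ is proper, for each $R>0$ the compact set $H^{-1}([-R,R])$ is contained in some bounded open $U_R\subset\R^{2n}$, so $c_{HZ}(U_R)<\infty$. By Sard's theorem it suffices to show that the set
\[
A_R\eqdef\{E\in[-R,R]: E\text{ is a regular value of }H\text{ and }X_H\text{ has no closed orbit on }H^{-1}(E)\}
\]
has Lebesgue measure zero. Suppose for contradiction that $|A_R|>0$. I would construct admissible Hamiltonians of the form $F=\chi\cdot f(H)$, where $\chi$ is a fixed cutoff with compact support in $U_R$ and $\chi\equiv 1$ on $H^{-1}([-R,R])$, and $f:\R\to[0,\infty)$ is a bump to be chosen. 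Where $\chi\equiv 1$ one has $X_F=f'(H)\,X_H$, so a nonconstant closed orbit of $X_F$ on the level $\{H=E\}$ is a time-rescaled nonconstant closed orbit of $X_H$ on $H^{-1}(E)$ -- hence exists only if $E\notin A_R$, with period $T_{X_H}(E)/|f'(E)|$. Choosing $f$ so that $f'$ is concentrated on $A_R$ and $|f'|$ is uniformly small on the (small) complement where closed $X_H$-orbits may have short period forces every nonconstant closed $X_F$-orbit to have period $>1$. The positive measure of $A_R$ then lets $\max f$ be made arbitrarily large, contradicting $c_{HZ}(U_R)<\infty$.

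The main obstacle is this last construction. The set $A_R$ is only measurable, so one cannot literally impose $f'\equiv 0$ off $A_R$. Instead one uses inner regularity of Lebesgue measure to take a compact $K\subset A_R$ with $|K|>|A_R|/2$, then builds a smooth $f$ whose derivative is supported in a small open neighborhood $W\supset K$ and satisfies a uniform bound $|f'|\leq C$ on $W\setminus A_R$, where $C$ is dictated by the minimum period of closed $X_H$-orbits on nearby non-$A_R$ levels. A scaling argument -- shrinking the ``excess'' region $W\setminus K$ relative to $K$ while rescaling $f$ -- allows $\max f\to\infty$ without violating admissibility. Once this delicate balance is achieved, Sard's theorem together with $c_{HZ}(U_R)<\infty$ for all $R$ forces $A_R$ to be null, which is exactly Theorem~\ref{thm:AEE}.
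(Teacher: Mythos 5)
The paper does not actually prove Theorem~\ref{thm:AEE}; it quotes it with a reference to \cite{hofer-zehnder} and only remarks that the original proof is variational. So the comparison is with the standard argument in the literature. Your overall architecture --- take finiteness of $c_{HZ}$ of bounded sets as the analytic black box, dispose of critical values by Sard, and reduce to showing that the set $A_R$ of regular values without closed orbits is null --- is exactly the right division of labor (it is the split between Hofer--Zehnder and Struwe reflected in the attribution). The problem is in the one step you flag as delicate, and your proposed fix does not work.

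You want a smooth $f$ with $f'$ large on a compact $K\subset A_R$ of positive measure and $|f'|\le C$ on $W\setminus A_R$, where $C$ is a \emph{fixed constant} coming from a lower bound on periods of closed $X_H$-orbits on the good levels. But $f'$ is continuous, and a positive-measure set $A_R$ may perfectly well have empty interior (a fat Cantor set of energy values is not excluded by anything you have assumed). In that case $W\setminus A_R$ is \emph{dense} in $W$, so $|f'|\le C$ on $W\setminus A_R$ forces $|f'|\le C$ on all of $W$, hence $\max f\le C\,|W|\le 2CR$: no contradiction. Shrinking $|W\setminus K|$ and rescaling cannot help, because the obstruction is density, not measure. (Conversely, if $A_R$ contained an interval you would need none of this: $F=f(H)$ with $f$ supported in that interval has no nonconstant periodic orbits at all.)

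What rescues the argument --- and is really the content of Struwe's contribution --- is that the admissibility constraint off $A_R$ is not a fixed constant but $|f'(E)|<\tau(E)$, where $\tau(E)$ is the minimal period of a nonconstant closed $X_H$-orbit on $H^{-1}(E)$, and one must prove that $\tau(E)\to\infty$ as $E$ approaches a regular value in $A_R$. This is a compactness statement: orbits on levels $E_j\to E_0$ with uniformly bounded periods subconverge by Arzel\`a--Ascoli to a closed orbit on $H^{-1}(E_0)$, which is nonconstant because periods of nonconstant orbits in the compact region $H^{-1}([-R,R])$ are uniformly bounded below (e.g.\ by $2\pi/\op{Lip}(X_H)$) and $E_0$ is a regular value --- contradicting $E_0\in A_R$. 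Granting the resulting lower semicontinuity of $\tau$, which equals $+\infty$ on a set of positive measure, one can approximate $\tau$ from below by smooth functions of arbitrarily large integral and then run your contradiction. Equivalently, and more standardly, one considers the monotone function $s\mapsto c_{HZ}(\{H<s\})$, invokes Lebesgue's theorem that it is differentiable almost everywhere, and at each regular value of differentiability produces closed orbits on nearby levels with periods bounded by the derivative, passing to the limit as above. Either way, this limiting argument is the heart of the proof and is missing from your proposal.
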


The original proof, see the exposition in \cite{hofer-zehnder}, uses
variational methods to find critical points of the ``symplectic
action'' functional on the loop space, which are the desired closed
orbits.  Using Floer theory for the symplectic action functional
(different versions of Floer theory will be introduced later in this
article), this result can be extended to some other symplectic
manifolds; for a survey of some of these developments see
\cite{ginzburg}.

However there are some symplectic manifolds to which this theorem does
not extend.  To give a simple example due to Zehnder \cite{zehnder},
consider the 4-torus $T^4=(\R/2\pi\Z)^4$ with coordinates
$x_1,x_2,x_3,x_4$ and the nonstandard symplectic form
\[
\omega = dx_1dx_2 + \varepsilon dx_2dx_3 + dx_3dx_4,
\]
where $\varepsilon$ is an irrational constant.  If $H=\sin x_4 $,
then
\[
X_H=\cos x_4\left(\frac{\partial}{\partial x_3} +
  \varepsilon\frac{\partial}{\partial x_1}\right).
\]
Since $\varepsilon$ is irrational, this vector field has no closed
orbits except where $\cos x_4=0$.  Thus no regular level set
of $H$ contains a closed orbit of $X_H$.

In addition, the word ``almost'' cannot be removed from
Theorem~\ref{thm:AEE}.  Namely, Ginzburg-G\"{u}rel
\cite{ginzburg-gurel} proved that there is a proper $C^2$ Hamiltonian
$H$ on $\R^4$ with a regular level set on which the vector field $X_H$
has no closed orbit.  Moreover $C^\infty$ examples are known in
$\R^{2n}$ for $n>2$, see the references in \cite{ginzburg}.  So even
for Hamiltonian vector fields on $\R^{2n}$ one needs a further
assumption to guarantee the existence of closed orbits.

\subsection{Weinstein's conjecture}
\label{sec:OSWC}

To see what kind of assumption might be appropriate, let $Y$ be a
hypersurface in a symplectic manifold $(M,\omega)$ which is a regular
level set of a Hamiltonian $H:M\to\R$.  Observe that the existence of
a closed orbit of $X_H$ on $Y$ depends only on the hypersurface $Y$
and not on the Hamiltonian $H$.  For suppose $K:M\to\R$ is another
Hamiltonian which also has $Y$ as a regular level set.  Then
$dK|_Y=fdH|_Y$ for some nonvanishing function $f:Y\to\R$, so
$X_K=fX_H$ on $Y$.  Thus the periodic orbits of $X_K$ and $X_H$ on $Y$
differ only in their parametrizations.  In fact one can describe the
existence problem for periodic orbits on $Y$ without referring to a
Hamiltonian at all by noting that the Hamiltonian vector field on $Y$
always lives in the rank $1$ subbundle
\[
L_Y\eqdef \Ker(\omega|_Y)\subset TY,
\]
called the {\em charateristic foliation\/}.  Thus the existence of a
closed orbit on $Y$ for any Hamiltonian having it as a regular level
set is equivalent to the existence of a closed curve tangent to
$L_Y$, sometimes called a {\em closed characteristic\/}.

Under what circumstances must a hypersurface in a symplectic manifold
have a closed characteristic?  In the late 1970's, Weinstein
\cite{weinstein78} showed that in $\R^{2n}$ with the standard
symplectic form, if $Y$ is a convex compact hypersurface, then $Y$ has
a closed characteristic.  Rabinowitz \cite{rabinowitz} showed that the
above statement holds with ``convex'' replaced by ``star-shaped'',
meaning transverse to the radial vector field
\begin{equation}
\label{eqn:RVF}
\rho \eqdef \frac{1}{2}\sum_{i=1}^n(x_i\partial_{x_i} + y_i\partial_{y_i}).
\end{equation}
Now the existence of a closed characteristic is invariant under
symplectomorphisms of $\R^{2n}$, while the star-shaped condition is
not.  This suggests that one should look for a more general,
symplectomorphism invariant condition on the hypersurface $Y$ that might
guarantee the existence of a closed characteristic.  Weinstein
\cite{weinstein79} proposed such a condition as follows.

\begin{definition}
  A hypersurface $Y$ in a symplectic manifold $(M,\omega)$ is of {\em
    contact type\/} if there is a 1-form $\lambda$ on $Y$ such that
  $d\lambda = \omega|_Y$, and $\lambda(v)\neq 0$ for all nonzero $v\in
  L_Y$.
\end{definition}

This condition is
clearly invariant under symplectomorphisms of $(M,\omega)$.

If $Y$ is a star-shaped hypersurface in $\R^{2n}$, then $Y$ is of
contact type, because one can take
\[
\lambda = \frac{1}{2}\sum_{i=1}^n(x_idy_i-y_idx_i)|_Y.
\]
To see why $\lambda(v)\neq 0$ for nonzero $v\in L_Y$, note that
nondegeneracy of $\omega$ implies that $\omega(v,w)\ne 0$ for any
vector $w$ transverse to $Y$.  The star-shaped condition says that
$\rho$ is transverse to $Y$, so $\omega(\rho,v)\neq 0$; on the other
hand it follows from the above formulas that
$\omega(\rho,v)=\lambda(v)$.

More generally, a {\em Liouville vector field\/} on a symplectic
manifold $(M,\omega)$ is a vector field $\rho$ such that the Lie
derivative $\mathcal{L}_\rho\omega=\omega$.  The radial vector field
\eqref{eqn:RVF} is an example of a Liouville vector field.  It turns
out that a hypersurface $Y$ in $(M,\omega)$ is of contact type if and
only if there exists a Liouville vector field defined in a
neighborhood of $Y$ which is transverse to $Y$, see eg \cite[Prop.\
3.58]{mcduff-salamon}.  In particular, given such a vector field
$\rho$, the contact type condition is fulfilled by $\lambda =
\omega(\rho,\cdot)|_Y$.

One could now conjecture that if $Y$ is a compact hypersurface of
contact type in a symplectic manifold $(M,\omega)$, then $Y$ has a
closed characteristic.  This is essentially what Weinstein conjectured
in \cite{weinstein79}.

\subsection{Contact forms}
\label{sec:CF}

In fact one can remove the symplectic manifold $(M,\omega)$ from the
discussion as follows.  Let $Y$ be an oriented manifold of dimension
$2n-1$.  A {\em contact form\/} on $Y$ is a $1$-form $\lambda$ on $Y$
such that
\[
\lambda\wedge(d\lambda)^{n-1}>0
\]
everywhere.  A contact form $\lambda$ determines a vector field $R$ on
$Y$, called the {\em Reeb vector field\/}, characterized by
\[
d\lambda(R,\cdot)=0, \quad\quad \lambda(R)=1.
\]

In the definition of a contact type hypersurface, one can replace the
condition ``$\lambda(v)\neq 0$ for all nonzero $v\in L_Y$'' by the
condition ``$\lambda$ is a contact form''.  Note that for such a contact
form, the associated Reeb vector field is tangent to the
characteristic foliation $L_Y$.

Conversely, any manifold with a contact form $(Y,\lambda)$ arises as a
hypersurface of contact type in an associated symplectic manifold.
Namely, the {\em symplectization\/} of $(Y,\lambda)$ is the manifold
$\R\times Y$ with the symplectic form $\omega=d(e^s\lambda)$, where
$s$ denotes the $\R$ coordinate.  The slice $\{0\}\times Y$ is a
hypersurface of contact type in $\R\times Y$, with $\lambda$
fulfilling the definition of contact type.

In conclusion, the conjecture at the end of \S\ref{sec:OSWC} is
equivalent to the following:

\begin{weinsteinconjecture}
  Let $Y$ be a closed oriented odd-dimensional manifold with a contact
  form $\lambda$.  Then the associated Reeb vector field $R$ has a
  closed orbit.
\end{weinsteinconjecture}

The eventual goal of this article is to explain Taubes's proof of the
Weinstein conjecture in dimension three:

\begin{theorem}[Taubes]
If $Y$ is a closed oriented three-manifold with a contact form, then
the associated Reeb vector field has a closed orbit.
\end{theorem}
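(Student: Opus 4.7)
The plan is to reduce the Weinstein conjecture in dimension three to the existence of solutions of the three-dimensional Seiberg--Witten equations with a well-chosen perturbation, and then to extract a closed Reeb orbit from these solutions by a singular-perturbation analysis. The topological input will be that the monopole Floer homology of any closed oriented three-manifold is nontrivial in appropriate $\Spinc$ structures (Kronheimer--Mrowka); the geometric input will be that, after perturbing the SW equations by a large multiple of the contact form, solutions must concentrate along closed orbits of the Reeb vector field $R$.

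First I would use $\lambda$ to rigidify the geometry of $Y$: pick a Riemannian metric so that $R$ has unit length, is orthogonal to $\xi \eqdef \Ker\lambda$, and so that $d\lambda|_\xi$ calibrates a compatible complex structure $J$ on $\xi$. This determines a canonical $\Spinc$ structure $\frak{s}_\xi$ whose spinor bundle splits as $\mathbb{S} = \underline{\C}\oplus\xi$, with the section $(1,0)$ as a distinguished baseline spinor. For each large parameter $r>0$, consider the perturbed Seiberg--Witten equations on $Y$,
\[
D_A \Psi = 0, \qquad * F_A \;=\; r\bigl(\tau(\Psi)-\lambda\bigr)+\mu,
\]
where $A$ is a $\Spinc$ connection on $\det \mathbb{S}$, $\tau$ is the standard quadratic map from spinors to $i\,T^*Y$, and $\mu$ is a small fixed correction. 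Kronheimer--Mrowka's nonvanishing theorem for monopole Floer homology (in $\frak{s}_\xi$ or a companion $\Spinc$ structure) guarantees that a genuine solution $(A_r,\Psi_r)$ exists for every $r$ in an unbounded set.

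The heart of the argument is the asymptotic analysis of $(A_r,\Psi_r)$ as $r\to\infty$. Writing $\Psi_r=(\alpha_r,\beta_r)$ with respect to the splitting $\mathbb{S}=\underline{\C}\oplus\xi$, I would push through $r$-uniform a priori estimates showing that $|\alpha_r|\to 1$ pointwise away from the zero set $\alpha_r^{-1}(0)$, that $|\beta_r|^2 = O(r^{-1})$, and that $F_{A_r}$ develops a delta-function-like concentration along $\alpha_r^{-1}(0)$. Interpreting this zero set as an integral $1$-current and passing to a subsequence, weak-$*$ compactness produces a closed nonnegative $1$-current $\gamma$ on $Y$ tangent to $\Ker d\lambda$ --- i.e.\ tangent to $R$. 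Any such current, if nonzero, is supported on closed orbits of $R$, and decomposing it into elementary pieces yields the desired closed Reeb orbit.

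The principal obstacle is the nonlinear PDE analysis just indicated: the pointwise, integral, and concentration estimates on $(A_r,\Psi_r)$ that survive the limit $r\to\infty$ are delicate and constitute the technical core of Taubes's paper, resting on Bochner--Weitzenb\"ock identities, maximum-principle arguments, and careful control of the Dirac operator on tubes of diameter $r^{-1/2}$. A closely related difficulty is the final step, ruling out the vacuum case $\gamma=0$: the Floer-theoretic setup must be used more sharply, via a Chern--Simons/action bound that forces $\int_\gamma\lambda$ to be strictly positive in the limit and thereby prevents the energy from dissolving uniformly or escaping into the $\beta$-component. Once these two analytic steps are in place, the topological and geometric ingredients above combine at once to produce a closed orbit of $R$.
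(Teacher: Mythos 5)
Your overall architecture matches Taubes's: a large-$r$ perturbation of the Seiberg--Witten equations by the contact form, existence of solutions from Kronheimer--Mrowka's nonvanishing theorem, and extraction of Reeb orbits from the concentration of $F_{A_r}$ along $\alpha_r^{-1}(0)$. But there are two genuine gaps, and they sit exactly where the real difficulty lies. First, your assertion that any nonzero closed $1$-current tangent to $R$ is ``supported on closed orbits of $R$'' is false: such a current is essentially an $R$-invariant measure, and an invariant measure need not charge any closed orbit (think of an irrational linear flow on a torus fiber). Without more, the weak-$*$ limit only shows that $R$ is not uniquely ergodic --- which is precisely the weaker conclusion Taubes obtains in the companion setting where the energy hypothesis is dropped. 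To actually close up an orbit one needs a uniform upper bound $\mc{E}(A_{r_n}) \eqdef i\int_Y\lambda\wedge F_{A_{r_n}} \le C$ along the sequence, which bounds the total length of the limit and allows a chain-of-tubes argument: each tube of radius of order $r^{-1/2}$ around the zero set of $\alpha$ carries a definite amount of energy, so the chain of Reeb arcs must eventually return to itself and close up.

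Second, you do not supply the mechanism for that upper energy bound, and it does not come for free from the mere existence of solutions: a priori $\mc{E}(A_r)$ could grow without bound as $r\to\infty$. Taubes's solution is to choose $(A(r),\psi(r))$ by a min-max over cycles representing a fixed nonzero Floer class, so that $\mc{F}(A(r),\psi(r))$ is continuous and piecewise differentiable in $r$ with $\frac{d}{dr}\mc{F}=-\frac{1}{2}\mc{E}$; to combine this with the a priori estimate $|cs(A)|\le c\,r^{2/3}\mc{E}(A)^{4/3}$ to conclude that either the energy is bounded along a subsequence or else $cs$ grows quadratically in $r$; and then to rule out the quadratic alternative by a new spectral-flow estimate bounding $|\deg(A,\psi)-\deg(A_{triv},\psi_{triv})+\frac{1}{4\pi^2}cs(A)|$ by $O(r^{31/16})$, since the Floer grading is independent of $r$. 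In your proposal the ``Chern--Simons/action bound'' is invoked only to rule out the vacuum case $\gamma=0$, which is in fact the easy step (handled by $c_1(E)\ne 0$ forcing zeros of $\alpha$, or by choosing the Floer class in a grading different from that of the trivial solution); the Chern--Simons/spectral-flow input is actually needed for the upper bound on the energy, and without it the argument does not produce a closed orbit.
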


\subsection{Some remarks on Taubes's proof}
Before starting on the long story which follows, let us briefly
comment on what is involved in Taubes's proof, and the history leading
up to it.

First of all, as we will see in \S\ref{sec:basicExamples},
\S\ref{sec:CS3D}, and \S\ref{sec:CCH} below, methods from symplectic
geometry have been used since the 1980's to prove the Weinstein
conjecture in many cases, in both 3 and higher dimensions.  But it is
not currently known if such methods can be used to prove it in all
cases.

Taubes's proof instead uses Seiberg-Witten theory.  As we describe in
\S\ref{sec:BigPicture}, the proof uses part of a 3-dimensional version
of Taubes's ``Seiberg-Witten=Gromov'' (SW=Gr) theorem.  The SW=Gr
theorem relates Seiberg-Witten invariants of symplectic 4-manifolds to
counts of holomorphic curves.  Its 3-dimensional counterpart relates
Seiberg-Witten theory of a 3-manifold with a contact form to closed
orbits of the Reeb vector field (and holomorphic curves in the
3-manifold cross $\R$).

The SW=Gr theorem was proved in the 1990's, and one might wonder why
the 3-dimensional Weinstein conjecture was not also proved around that
time using similar methods.  Part of what was missing was a sufficient
development of Seiberg-Witten Floer homology, the 3-dimensional
counterpart of Seiberg-Witten invariants of 4-manifolds, which has
since been provided by Kronheimer-Mrowka.  In particular
Kronheimer-Mrowka proved a nontriviality result for Seiberg-Witten
Floer homology, which is the key input on the Seiberg-Witten side in
the proof of the Weinstein conjecture.  Finally, as we explain in
\S\ref{sec:OTP}, the 3-dimensional analogue of SW=Gr is not just a
straightforward adaptation of the 4-dimensional story, but rather
involves some nontrivial new ingredients, for example a new result of
Taubes estimating the spectral flow of families of Dirac operators in
terms of the Chern-Simons functional.

What follows now is a somewhat leisurely tour, gradually working
towards the above.  The impatient reader may wish to skip ahead to
\S\ref{sec:BigPicture} for an overview of the big picture surrounding
Taubes's proof, or to \S\ref{sec:OTP} for an outline of the proof
itself.

\subsection{Some terminology}
\label{sec:terminology}

Below, to save space, we usually say ``Reeb orbit'' instead of
``closed orbit of the Reeb vector field''.  Two Reeb orbits
$\gamma,\gamma':\R/T\Z\to Y$ are considered equivalent if they differ
by reparametrization, ie precomposition with a translation of
$\R/T\Z$.  If $\gamma:\R/T\Z\to Y$ is a Reeb orbit and $k$ is a
positive integer, then the $k$-fold {\em iterate\/} of $\gamma$ is the
pullback of $\gamma$ to $\R/kT\Z$.  A Reeb orbit $\gamma$ is embedded
if and only if it is not the $k$-fold iterate of another Reeb orbit
where $k>1$.

\section{Basic examples in $2n-1$ dimensions}
\label{sec:basicExamples}

\subsection{Hypersurfaces in $\R^{2n}$}
\label{sec:hct}

The Weinstein conjecture for compact hypersurfaces of contact type in
$\R^{2n}$ was proved in 1987 by Viterbo \cite{viterbo}.  In fact the
almost-existence result in Theorem~\ref{thm:AEE} is a generalization
of this.  To see why, let $Y$ be a compact hypersurface in $\R^{2n}$
of contact type.  As discussed in \S\ref{sec:OSWC}, there exists a
Liouville vector field $\rho$ defined on a neighborhood of $Y$ which
is transverse to $Y$.  Flowing the hypersurface $Y$ along the vector
field $\rho$ for a small time $\varepsilon$ gives another hypersurface
$Y_\varepsilon$ and a diffeomorphism $Y\simeq Y_\varepsilon$.  Since
$\rho$ is transverse to $Y$, there exists $\delta>0$ such that the
different hypersurfaces $Y_\varepsilon$ for $|\epsilon|<\delta$ are
disjoint and sweep out a neighborhood that can be identified in the
obvious way with $(-\delta,\delta)\times Y$.  Also, the Liouville
condition $\mathcal{L}_\rho\omega=\omega$ implies that the above
diffeomorphism $Y\simeq Y_\varepsilon$ respects the characteristic
foliations.  Thus $Y_\varepsilon$ has a closed characteristic for
either all $\varepsilon$ or none.  On the other hand we can choose a
proper Hamiltonian $H:\R^{2n}\to\R$ such that $H=\varepsilon$ on
$Y_\varepsilon$ and $|H|>\delta$ outside of our neighborhood
$(-\delta,\delta)\times Y$.  Then Theorem~\ref{thm:AEE} implies that
$Y_\varepsilon$ has a closed characteristic for almost every
$\varepsilon$.  Hence there is a closed characteristic for every
$\varepsilon$, and in particular for $\varepsilon=0$.

\subsection{Cotangent bundles}

Another important example of a manifold with a contact form is the
unit cotangent bundle of a Riemannian manifold.

To start, let $Q$ be a smooth manifold.  There is a canonical $1$-form
$\lambda$ on the cotangent bundle $T^*Q$, defined as follows.  Let
$\pi:T^*Q\to Q$ denote the projection.  If $q\in Q$, and if $p\in
T^*_qQ$, then $\lambda:T_{(q,p)}T^*Q\to\R$ is the composition
\[
T_{(q,p)}T^*Q \stackrel{\pi_*}{\longrightarrow} T_qQ
\stackrel{p}{\longrightarrow} \R.
\]
More explicitly, if $q_1,\ldots,q_n$ are local coordinates on a
coordinate patch $U\subset Q$, then one can write any cotangent vector
at a point in $U$ as $p=\sum_{i=1}^np_idq_i$, and this gives local
coordinates $q_1,\ldots,q_n,p_1,\ldots,p_n$ on $\pi^{-1}(U)\subset
T^*Q$.  In these coordinates
\[
\lambda = \sum_{i=1}^np_idq_i.
\]
It follows from this last equation that $d\lambda$ defines a
symplectic form on $T^*Q$.

Now suppose that $Q$ has a Riemannian metric.  This induces a metric
on $T^*Q$, and we consider the unit cotangent bundle
\[
ST^*Q = \{p\in T^*Q \mid |p|=1\}.
\]
The restriction of $\lambda$ to $ST^*Q$ is a contact form.  Indeed
$ST^*Q$ is a hypersurface of contact type in the symplectic manifold
$(T^*Q,d\lambda)$, with transverse Liouville vector field
$\rho=\sum_{i=1}^np_i\partial_{p_i}$, and $\imath_\rho d\lambda =
\lambda$.  It turns out that the associated Reeb vector field agrees
with the geodesic flow, under the identification $T^*Q=TQ$ given by
the metric.  Thus Reeb orbits in $ST^*Q$ are equivalent to closed
geodesics in $Q$.  If $Q$ is compact, then so is $ST^*Q$, so the
Weinstein conjecture is applicable in this case, where it is
equivalent to the classical Lyusternik-Fet theorem asserting that
every compact Riemannian manifold has at least one closed geodesic.

More generally, the above Liouville vector field $\rho$ shows that a
hypersurface $Y\subset T^*Q$ which intersects each fiber transversely
in a star-shaped subset of the fiber is of contact type.  Rabinowitz
proved the existence of closed characteristics for a related class of
hypersurfaces in $\R^{2n}$.  Alan Weinstein points out to me that this
example and the unit cotangent bundle example were important
motivation for his conjecture.

\subsection{Prequantization spaces}

Another general example of manifolds with contact forms is given by
circle bundles, or ``prequantization spaces''.  Let $(\Sigma,\omega)$
be a symplectic manifold of dimension $2n-2$, and suppose that the
cohomology class $-[\omega]/2\pi\in H^2(\Sigma;\R)$ is the image of an
integral class $e\in H^2(\Sigma;\Z)$.  Let $p:V\to\Sigma$ be the
principal $S^1$-bundle with first Chern class $e$. This means that
there is a free $S^1$ action on $V$ whose quotient is $\Sigma$, and
$e\in H^2(\Sigma;\Z)$ is the primary obstruction to finding a section
$\Sigma\to V$.  Let $R$ denote the derivative of the $S^1$ action;
this is a vector field on $V$ which is tangent to the fibers. Since
$\omega$ is a closed form in the cohomology class $-2\pi e$, one can
find a (real-valued) connection $1$-form $\lambda$ on $V$ whose
curvature equals $\omega$.  These conditions mean that $\lambda$ is
invariant under the $S^1$ action, $\lambda(R)=1$, and
$d\lambda=p^*\omega$.  It follows that $\lambda$ is a contact form on
$V$ whose Reeb vector field is $R$.  In particular, the Reeb orbits
are the fibers (which all have period $2\pi$) and their iterates.

The fact that Reeb orbits appear here in $(2n-2)$-dimensional smooth
families is a special feature arising from the symmetry of the
picture.  For a ``generic'' contact form on a manifold $Y$ the Reeb
orbits are isolated, in the sense that if $\gamma$ is a Reeb orbit of
length $T$ which goes through a point $x\in Y$, then there is no other
Reeb orbit through a point close to $x$ with length close to $T$.  For
example, on a circle bundle as above, one can get rid of most of the
Reeb orbits by perturbing the contact form $\lambda$ to
\[
\lambda'=(1+p^*H)\lambda,
\]
where $H:\Sigma\to\R$ is a smooth function.  This is still a contact
form as long as $|H|<1$. The new Reeb vector field is given by
\begin{equation}
\label{eqn:R'}
R' = (1+p^*H)^{-1}R + (1+p^*H)^{-2}\widetilde{X}_H,
\end{equation}
where $X_H$ is the Hamiltonian vector field on $\Sigma$ determined by
$H$, and $\widetilde{X}_H$ denotes its horizontal lift, ie the unique
vector field on $V$ with $p_*\widetilde{X}_H=X_H$ and
$\lambda(\widetilde{X}_H)=0$.  Consequently, for this new contact
form, the only fibers that are Reeb orbits are the fibers over the
critical points of $H$.  On the other hand there may be additional
Reeb orbits that cover closed orbits of $X_H$.  However it follows
from \eqref{eqn:R'} that if $H$ and $dH$ are small, then these Reeb
orbits all have period much greater than $2\pi$.  In any case, the
Weinstein conjecture here asserts that there is no way to eliminate
all of the remaining Reeb orbits without introducing new ones.  The
Weinstein conjecture in this case can be proved using cylindrical
contact homology \cite[\S2.9]{egh}, about which we will have more to
say in \S\ref{sec:CCH}.

\section{More about contact geometry in three dimensions}
\label{sec:CS3D}

We now restrict attention to the three-dimensional case.  Much more is
known about contact geometry in three dimensions than in higher
dimensions, and to gain an appreciation for Taubes's result and its
proof it will help to review some of the basics of this.  For much
more about this subject we refer the reader to \cite{etnyre} and
\cite{geiges}.

\subsection{Contact structures in three dimensions.}

Recall that a contact form on a closed oriented three-manifold $Y$ is
a $1$-form $\lambda$ such that $\lambda\wedge d\lambda>0$ everywhere.
The associated {\em contact structure\/} is the $2$-plane field
$\xi=\Ker(\lambda)$.  This has an orientation induced from the
orientation of $Y$ and the direction of the Reeb vector field.  In
general one defines a contact structure\footnote{Sometimes this is
  called a ``co-oriented contact structure''.  There is also a notion
  of unoriented contact structure in which $\xi$ is not assumed to be
  oriented and is only required to be locally the kernel of a contact
  form.} to be an oriented $2$-plane field which is the kernel of some
contact form as above.  A contact structure is a ``totally
nonintegrable'' $2$-plane field, which means that in a sense it as far
as possible from being a foliation: the kernel of $\lambda$ is a
foliation if and only if $\lambda\wedge d\lambda \equiv 0$.

Different contact forms can give rise to the same contact structure.
To be precise, if $\lambda$ is a contact form, then $\lambda'$ is
another contact form giving rise to the same contact structure if and
only if $\lambda'=f\lambda$ where $f:Y\to\R$ is a positive smooth
function.  For a given contact structure, the Reeb vector field
depends on the choice of contact form, but it is always transverse to
the contact structure.

A (three-dimensional closed) {\em contact manifold\/} is a pair
$(Y,\xi)$ where $Y$ is a closed oriented three-manifold and $\xi$ is a
contact structure on $Y$.  Two contact manifolds $(Y,\xi)$ and
$(Y,\xi')$ are {\em isomorphic\/}, or {\em contactomorphic\/}, if there is an
orientation-preserving diffeomorphism $\phi:Y\to Y'$ such that
$\phi_*$ sends $\xi$ to $\xi'$ preserving the orientiations.  Two
contact structures $\xi$ and $\xi'$ on $Y$ are {\em isotopic\/} if
there is a one-parameter family of contact structures $\{\xi_t\mid
t\in[0,1]\}$ on $Y$ such that $\xi_0=\xi$ and $\xi_1=\xi'$.  Gray's
stability theorem asserts that $\xi$ and $\xi'$ are isotopic if and
only if there is a contactomorphism between them which is isotopic to
the identity.

A version of Darboux's theorem asserts that any contact structure on a
3-manifold is locally isomorphic to the ``standard contact structure''
on $\R^3$, which is the kernel of the contact form
\begin{equation}
\label{eqn:SCS}
\lambda = dz - y\, dx.
\end{equation}
In fact any contact form is locally diffeomorphic to this one.  The
contact structure defined by \eqref{eqn:SCS} is invariant under
translation in the $z$ direction.  The contact planes are horizontal
along the $x$ axis, but rotate as one moves in the $y$ direction; the
total rotation angle as $y$ goes from $-\infty$ to $+\infty$ is $\pi$.
The Reeb vector field associated to $\lambda$ is simply
\[
R=\partial_z.
\]
In particular there are no Reeb orbits.  Of course this does not
contradict the Weinstein conjecture since $\R^3$ is not compact, but
it does indicate that any proof of the Weinstein conjecture will need to use
global considerations.

\subsection{Tight versus overtwisted}
\label{sec:overtwisted}

Define an {\em overtwisted disk\/} in a contact
3-manifold $(Y,\xi)$ to be a smoothly embedded closed disk $D\subset
Y$ such that for each $y\in\partial D$ we have $T_y D =
\xi_y$.  A contact 3-manifold is called {\em
  overtwisted\/} if it contains an overtwisted disk; otherwise it is
called {\em tight\/}.

An example of an overtwisted contact structure on $\R^3$ is the kernel
of the contact form given in cylindrical coordinates by
\[
\lambda = \cos r \, dz + r \sin r \, d\theta.
\]
This contact structure is invariant under translation in the $z$
direction.  Here the contact planes are horizontal on the $z$ axis,
but rotate infinitely many times as one moves out from the $z$ axis
along a horizontal ray.  An overtwisted disk is given by a horizontal
disk of radius $r$, where $r$ is a positive number such that $\sin r = 0$.

On the other hand, the standard contact structure defined by
\eqref{eqn:SCS} is tight, although this is less trivial to prove.
More generally, $(Y,\xi)$ is called (strongly symplectically) {\em
  fillable\/} if there is a compact symplectic 4-manifold $(X,\omega)$
with boundary $\partial X=Y$ as oriented manifolds, and a contact form
$\lambda$ for $(Y,\xi)$, such that $d\lambda=\omega|_Y$.  (In
particular $Y$ is a hypersurface of contact type in $X$, and an
associated Liouville vector field points out of the boundary of $X$.)
A fundamental theorem in the subject asserts that any fillable contact
structure is tight \cite{eft}.

\subsection{Simple examples}
\label{sec:SE}

To get some examples of contact forms on 3-manifolds, recall
from \S\ref{sec:OSWC} that any star-shaped hypersurface $Y$ in $\R^4$
has a contact form
\[
\lambda = \frac{1}{2}\sum_{i=1}^2(x_idy_i-y_idx_i)|_Y.
\]
The resulting contact structure on $Y\simeq S^3$ is tight, because it
is filled by the solid region that $Y$ bounds in $\R^4$. A theorem of
Eliashberg asserts that all tight contact structures on $S^3$ are
isotopic to this one.

If $Y$ is the unit sphere in $\R^4$, then the Reeb vector field is
tangent to the fibers of the Hopf fibration $S^3\to S^2$.  In
particular, there is a family of Reeb orbits parametrized by $S^2$.
This is in fact a special case of the circle bundle example that we
considered previously, and in particular it is not ``generic''.  If
one replaces the sphere with the ellipsoid
\begin{equation}
\label{eqn:ellipsoid}
\frac{x_1^2+y_1^2}{a_1} + \frac{x_2^2+y_2^2}{a_2} = 1
\end{equation}
where $a_1,a_2$ are positive real numbers, then the Reeb vector field
is given by
\[
R = 2\sum_{i=1}^2 a_i^{-1}\left(x_i\partial_{y_i} - y_i\partial_{x_i}\right).
\]
This vector field rotates in the $x_i,y_i$ plane at angular speed
$2a_i^{-1}$.  Thus if $a_1/a_2$ is irrational, then there are just two
embedded Reeb orbits, namely the circles $x_1=y_1=0$ and $x_2=y_2=0$.
The Weinstein conjecture says that we can not further modify the
contact form to eliminate these two remaining orbits without
introducing new ones.

We next consider some examples of contact forms on the 3-torus $T^3$.
Write $T^3=(\R/2\pi\Z)^3$ with coordinates $x,y,z$.  For each positive
integer $n$, define a contact form $\lambda_n$ on $T^3$ by
\begin{equation}
\label{eqn:lambdan}
\lambda_n \eqdef \cos(nz) dx + \sin(nz) dy.
\end{equation}
The associated Reeb vector field is given by
\begin{equation}
\label{eqn:ReebTorus}
R_n = \cos(nz) \partial_x + \sin(nz) \partial_y.
\end{equation}
We can regard $T^3$ as a $T^2$-bundle over $S^1$, where $z$ is the
coordinate on $S^1$ and $x,y$ are the fiber coordinates.  The Reeb
vector field is then a linear vector field tangent to each fiber,
whose slope rotates as $z$ increases.  Whenever the slope is rational,
the fiber is foliated by Reeb orbits.  That is, there is a circle of
embedded Reeb orbits for each $z$ such that
$\tan(nz)\in\Q\cup\{\infty\}$.  Again this is a non-generic situation,
and it turns out for any such $z$, one can perturb the contact form so
that the corresponding circle of Reeb orbits disintegrates into just
two Reeb orbits.  We will see in \S\ref{sec:CCH} below that the
contact structures $\xi_n\eqdef \Ker(\lambda_n)$ are pairwise
non-contactomorphic.  Also they are all tight.  Note that $\xi_1$ is
isomorphic to the canonical contact structure on the unit contangent
bundle of $T^2$ with a flat metric.

\begin{remark}
  One can use the above example to illustrate that Reeb vector fields
  are somewhat special.  To see how, note that in general if $R$ is the
  Reeb vector field associated to a contact form $\lambda$ on a
  3-manifold, then $R$ is volume preserving with respect to the volume
  form $\lambda\wedge d\lambda$, because the definition of Reeb vector
  field implies that the Lie derivative $\mathcal{L}_R\lambda=0$.  But
  not every volume-preserving vector field is a Reeb vector field.  In
  fact the Reeb vector field \eqref{eqn:ReebTorus} can easily be
  perturbed to a volume-preserving vector field with no closed orbits
  (which by the Weinstein conjecture cannot be a Reeb vector field).
  Namely, consider the vector field
\[
V = \cos(nz) \partial_x + (\sin(nz)+\varepsilon_1) \partial_y +
\varepsilon_2\partial_z
\]
where $\varepsilon_1,\varepsilon_2\neq 0$ and
$\varepsilon_1/\varepsilon_2$ is irrational.  Suppose that
$(x(t),y(t),z(t))$ is a trajectory of $V$.  Then
$z(t)=z(0)+\varepsilon_2t$, so if this is a closed orbit then the
period must be $2\pi k/\varepsilon_2$ for some positive integer $k$.
But the path $(x(t),y(t))$ moves in the sum of a circular motion with
period $2\pi/(n\varepsilon_2)$ and a vertical motion of speed
$\varepsilon_1$, so we have $(x(2\pi k/\varepsilon_2),y(2\pi
k/\varepsilon_2))=(0,2\pi k\varepsilon_1/\varepsilon_2)$.  Thus our
assumption that $\varepsilon_1/\varepsilon_2$ is irrational implies
that there is no closed orbit.
\end{remark}

\subsection{Classification of overtwisted contact structures}
\label{sec:cot}

A contact structure is a particular kind of oriented 2-plane field,
and if two contact structures are isotopic then the corresponding
oriented 2-plane fields are homotopic.  A remarkable theorem of
Eliashberg implies that for overtwisted contact structures, the
converse is true:

\begin{theorem}[Eliashberg \cite{eliashberg}]
  For any closed oriented 3-manifold $Y$, the inclusion of the set of
  overtwisted contact structures on $Y$ into the set of oriented
  $2$-plane fields on $Y$ is a homotopy equivalence.
\end{theorem}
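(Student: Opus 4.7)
The plan is to prove this as an h-principle statement, most naturally in its parametric form. A map between spaces is a (weak) homotopy equivalence if and only if it induces bijections on all homotopy groups with arbitrary basepoints, so it suffices to show that for each $k\geq 0$, every continuous family of oriented $2$-plane fields on $Y$ parametrized by $D^k$ whose restriction to $\partial D^k$ consists of overtwisted contact structures can be deformed, rel $\partial D^k$, to a family of overtwisted contact structures. The two fundamental cases are $k=0$ (every oriented plane field is homotopic to an overtwisted contact structure) and $k=1$ (two overtwisted contact structures that agree on a neighborhood of some embedded disk and are homotopic as plane fields are isotopic as contact structures); the general $k$ follows by carrying the same constructions through parameters.

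For $k=0$, I would start by choosing a handle decomposition or triangulation of $Y$ and building a contact structure $\xi$ on $Y$ by the Lutz--Martinet procedure: put an explicit contact form on a neighborhood of the $1$-skeleton modeled on the standard form $dz-y\,dx$ of \eqref{eqn:SCS}, then extend across $2$- and $3$-handles using standard model contact forms. A relative version of this argument, carried through a continuous family of plane fields, gives a contact structure $\xi$ in the same homotopy class of oriented $2$-plane fields as the given $\xi_0$. Next, perform a full Lutz twist along a knot $K\subset Y$ transverse to $\xi$: replace $\xi$ on a tubular neighborhood of $K$ by a model that spins the plane field by an extra $2\pi$ along the radial direction. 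This does not change the homotopy class of $\xi$ as an oriented plane field (the full twist is null-homotopic through plane fields), but it inserts an overtwisted disk into the neighborhood of $K$. Thus the resulting contact structure is overtwisted and homotopic to $\xi_0$.

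For $k=1$, given two overtwisted contact structures $\xi_0,\xi_1$ that are homotopic as oriented plane fields, I would isolate an overtwisted disk $D_i\subset (Y,\xi_i)$ for $i=0,1$ and prove a ``uniqueness of overtwisted disk neighborhoods'' lemma: the germ of a contact structure near an overtwisted disk is unique up to contactomorphism. This reduces the problem to the complement of such a neighborhood. On that complement one runs a Gray-stability-style argument, but now with the crucial extra freedom that any failure to produce a contact deformation at some stage can be compensated for by passing defects into the overtwisted disk neighborhood, where they can be absorbed without obstruction. Concretely, a homotopy of plane fields is deformed into an isotopy of contact structures by an inductive construction on a cell decomposition of the complement, using the overtwisted neighborhood as a ``reservoir'' for topological correction terms.

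The main obstacle, and the deepest part of Eliashberg's argument, is this absorption step: formalizing the intuition that an overtwisted disk is a universal flexibility device that lets one convert arbitrary plane-field deformations into genuine contact deformations. This is where Eliashberg's detailed study of the characteristic foliations on surfaces in contact $3$-manifolds is essential, and where his technique parallels Gromov's convex integration philosophy. Once this relative, parametric extension statement is established, the homotopy equivalence follows formally from the long exact sequences of homotopy groups of the pair (oriented plane fields, overtwisted contact structures).
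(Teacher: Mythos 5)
The paper does not prove this theorem; it is quoted from Eliashberg's paper with a pointer to Geiges's book for a detailed exposition, so there is no in-paper argument to compare yours against. Judged on its own, your proposal is an accurate roadmap of how Eliashberg's proof actually goes: reduction to a relative parametric extension statement, Lutz--Martinet constructions plus a full Lutz twist to handle surjectivity on $\pi_0$ without changing the homotopy class of the plane field, normalization of the germ of the contact structure near an overtwisted disk, and then the use of that disk as a ``reservoir of flexibility'' for the isotopy and higher-parametric statements.

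The gap is that the step you yourself flag as ``the main obstacle'' is the entire mathematical content of the theorem, and you give no indication of how it is carried out. Concretely, after cutting out a neighborhood of the $2$-skeleton and of the overtwisted disk, what remains is a ball $B^3$, and one must show that the space of contact structures on $B^3$ which are standard near $\partial B^3$ and admit an overtwisted disk nearby is weakly homotopy equivalent to the corresponding space of plane fields. Eliashberg proves this by classifying (families of) characteristic foliations induced on the concentric spheres foliating the ball, normalizing them to have exactly two singular points and controlling how closed leaves appear and disappear as one moves through the family; the overtwisted disk is what allows one to kill the single obstruction (a certain ``hole'' in the family of foliations) that arises. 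None of this appears in your sketch, and no softer argument (Gray stability plus ``absorbing defects'') is known to substitute for it: Gray stability requires an honest path of contact structures, which is exactly what must be constructed. Also note a small imprecision in your $k=1$ statement: the two overtwisted structures must be homotopic as plane fields \emph{rel} the fixed overtwisted disk (after first arranging, via the uniqueness-of-germs lemma, that they agree near it); without this relative condition the induction over the cell decomposition does not start. So your proposal is a correct outline of the right approach, but as written it defers rather than supplies the proof.
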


A detailed exposition of the proof may be found in \cite{geiges}.  In
particular, this theorem implies that overtwisted contact structures
modulo isotopy are equivalent to homotopy classes of oriented 2-plane
fields.  Note that the latter always exist\footnote{In higher
  dimensions, not as much is known about which manifolds admit contact
  structures.  For example it was only in 2002 that odd-dimensional
  tori were shown to admit contact structures, by Bourgeois
  \cite{bourgeois02}.  In dimensions greater than three, there is a
  homotopy-theoretic obstruction: a closed oriented $(2n-1)$ manifold
  $Y$ admits a (cooriented) contact structure only if the structure
  group of the tangent bundle $TY$ reduces to $U(n-1)$.  I am not
  aware of any further known obstructions.  For some positive results
  in the 5-dimensional case see eg \cite[\S8]{geiges}.}, because an
oriented 3-manifold has trivial tangent bundle.

\begin{remark}
\label{rem:opf}
To prepare for the discussion of Seiberg-Witten theory later, it is
worth saying a bit more here about what the set of homotopy classes of
oriented 2-plane fields on a given closed oriented (connected) $Y$
looks like.  First note that a homotopy class of oriented 2-plane
fields on $Y$ is equivalent to a homotopy class of nonvanishing
sections of $TY$.  In particular, if $\xi$ and $\xi'$ are two oriented
2-plane fields on $Y$, then the primary obstruction to finding a
homotopy between them is an element of $H^2(Y;\Z)$.  If the primary
obstruction vanishes, then it turns out that the remaining obstruction
lives in $\Z/d$, where $d$ denotes the divisibility of
$c_1(\xi)=c_1(\xi')$ in $H^2(Y;\Z)$ mod torsion, which is always an
even integer.
\end{remark}

The classification of tight contact structures is more complicated,
and a subject of ongoing research.  In particular, the map from tight
contact structures to homotopy classes of oriented $2$-plane fields is
in general neither injective nor surjective.  Failure of injectivity
is illustrated by the contact structures $\xi_n$ on $T^3$ in
\S\ref{sec:SE}, which are pairwise non-contactomorphic even though
they all represent the same homotopy class of oriented 2-plane fields.
Failure of surjectivity follows for example from a much stronger
theorem of Colin-Giroux-Honda \cite{colin-giroux-honda}, which asserts
that on any given closed oriented 3-manifold there are only finitely
many homotopy classes of oriented 2-plane fields that contain tight
contact structures (even though there are always infinitely many
homotopy classes of oriented 2-plane fields, as follows from
Remark~\ref{rem:opf} above).

\subsection{Open book decompositions}
\label{sec:OB}

There is a useful classification of all contact three-manifolds, not
just the overtwisted ones, in terms of open book decompositions.

Let $\Sigma$ be a compact oriented connected surface with nonempty
boundary.  Let $\phi:\Sigma\to\Sigma$ be an orientation-preserving
diffeomorphism which is the identity near the boundary.  One can then
define a closed oriented three-manifold
\[
\begin{split}
Y_\phi &\eqdef [0,1]\times\Sigma/\sim,\\
(1,x)&\sim (0,\phi(x)) \quad\forall x\in\Sigma,\\
(t,x)&\sim (t',x) \quad\quad\forall x\in\partial\Sigma,\; t,t'\in[0,1],
\end{split}
\]
called an {\em open book\/}.  The image of a set $\{t\}\times\Sigma$
in $Y_\phi$ is called a {\em page\/}.  The boundaries of the different
pages are all identified with each other, to an oriented link in
$Y_\phi$ called the {\em binding\/}.  The map $\phi$ is called the
{\em monodromy\/} of the open book.  An {\em open book
  decomposition\/} of a closed oriented three-manifold $Y$ is a
diffeomorphism of $Y$ with an open book $Y_\phi$ as above.

\begin{definition}
  A contact structure $\xi$ on a closed three-manifold $Y$ is {\em
    compatible\/} with, or {\em supported\/} by, an open book
  decomposition $Y\simeq Y_\phi$ if $\xi$ is isotopic to a contact
  structure given by a contact form $\lambda$ such that:
\begin{itemize}
\item
The Reeb vector field is tangent to the binding (oriented positively).
\item
The Reeb vector field is transverse to
the interior of each page, intersecting positively.
\end{itemize}
\end{definition}

\begin{example}
The standard contact structure on the unit sphere described in
\S\ref{sec:SE} is compatible with an open book decomposition of $S^3$
in which $\Sigma$ is a disk and $\phi$ is the identity map.
\end{example}

A short argument by Thurston-Winkelnkemper shows that every open book
decomposition has a compatible contact structure.
In fact the compatible contact structure is determined up to isotopy
by the open book.  Moreover, a theorem of Giroux asserts that every
contact structure can be obtained in this way, and two open books
determine isotopic contact structures if and only if they are related
to each other by ``positive stabilizations''.  For more about this see
\cite{giroux,etnyre06}.

\subsection{Some previous results on the 3d Weinstein conjecture}
\label{sec:previous}

There is a long history of work proving the Weinstein conjecture for
various classes of contact three-manifolds.  Often one can further
show that Reeb orbits with certain properties must exist.  To give
just a few examples here:

\begin{theorem}[Hofer \cite{hofer93}]
\label{thm:hoferwc}
Let $(Y,\xi)$ be a closed contact 3-manifold in which $\xi$ is
overtwisted or $\pi_2(Y)\neq 0$.  Then for any contact form with
kernel $\xi$, there exists a {\em contractible\/} Reeb orbit.
\end{theorem}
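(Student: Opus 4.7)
The plan is to follow Hofer's original strategy and produce a contractible Reeb orbit as the asymptotic limit of a finite-energy pseudoholomorphic plane in the symplectization. I would equip $(\R\times Y, d(e^s\lambda))$ with an $\R$-invariant almost complex structure $J$ sending $\partial_s\mapsto R$, preserving $\xi$, and restricting on $\xi$ to a $d\lambda$-compatible complex structure. The key analytic tool is the Hofer energy $E(u)=\sup_\phi\int u^*d(\phi(s)\lambda)$, with $\phi$ ranging over smooth $\phi:\R\to[0,1]$ satisfying $\phi'\geq 0$; finiteness of $E$ simultaneously controls the $d\lambda$-energy of $u$ on slices and the $\lambda$-action of its ends.

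The crucial general fact, also due to Hofer, is that if $u:\C\to\R\times Y$ is a nonconstant $J$-holomorphic map with $E(u)<\infty$, then as $|z|\to\infty$ the projection of $u$ to $Y$ converges (along a suitable sequence of circles, and in fact exponentially in appropriate coordinates) to a closed Reeb orbit $\gamma$. Since the domain is $\C$, this $\gamma$ is automatically nullhomotopic in $Y$, the disk cap being the projected image of the plane. Thus the entire theorem reduces to producing one nonconstant finite-energy $J$-holomorphic plane in $\R\times Y$.

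In the overtwisted case, I would produce such a plane by a Bishop family. Let $D\subset Y$ be an overtwisted disk and regard $\{0\}\times D\subset\R\times Y$ as a totally real boundary condition away from the elliptic singularities of the characteristic foliation on $D$. Bishop's theorem supplies a small $1$-parameter family of embedded $J$-holomorphic disks with boundary on $\{0\}\times D$ emanating from an interior elliptic singularity, and Fredholm theory plus the implicit function theorem extends this family. The family cannot terminate at a smooth disk whose boundary hits $\partial D$, because along $\partial D$ the tangency $T\partial D=\xi|_{\partial D}$ rules out the positive transversality of the holomorphic boundary to the characteristic foliation that is required in the interior. By Gromov compactness the family must therefore break, and an analysis of the breaking configuration yields a nonconstant finite-energy plane whose asymptotic Reeb orbit is contractible (capped together with the original Bishop disks and any bubbles, it bounds a disk in $Y$). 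For the case $\pi_2(Y)\neq 0$, I would use the exactness of $d(e^s\lambda)$, which by Stokes forbids nonconstant closed $J$-holomorphic curves in $\R\times Y$: one sets up a moduli problem of $J$-holomorphic spheres or disks in $\R\times Y$ whose topology is constrained to realize a given nontrivial class of $\pi_2(Y)$, and since no closed realization exists, the moduli must blow up, producing via bubbling the desired finite-energy plane with a contractible asymptotic orbit.

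The main obstacle in both cases is the compactness-and-bubbling analysis needed to extract the plane: proving the asymptotic convergence theorem with exponential decay, classifying the possible limits of a degenerating family of finite-energy curves, and ruling out the unwanted sphere bubbles in $\R\times Y$ that could absorb area without producing a Reeb asymptote. This analytic package, rather than the formal topological picture sketched above, is the technical heart of \cite{hofer93}.
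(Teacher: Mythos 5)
Your outline is essentially Hofer's original argument, which is exactly what the paper points to: it does not prove Theorem~\ref{thm:hoferwc} itself but cites \cite{hofer93}, and its remark that the orbit arises as the end of a punctured holomorphic sphere in $\R\times Y$ with a single positive puncture is precisely your finite-energy plane, obtained from the degenerating Bishop family (overtwisted case) or failed filling of a nontrivial sphere ($\pi_2(Y)\neq 0$ case). Your sketch is correct at the level of detail expected here, with the technical weight rightly placed on the asymptotic convergence and bubbling-off analysis.
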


\begin{theorem}[Abbas-Cieliebak-Hofer \cite{ach}]
\label{thm:ach}
Suppose $(Y,\xi)$ is supported by an open book in which the pages have
genus zero.  Then for any contact form with kernel $\xi$, there exists
a nonempty finite collection of Reeb orbits $\{\gamma_i\}$ with
\[
\sum_i [\gamma_i]=0\in H_1(Y).
\]
\end{theorem}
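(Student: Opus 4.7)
The plan is to follow the strategy of Abbas--Cieliebak--Hofer, in which the pages of the planar open book are lifted to finite-energy pseudoholomorphic curves in the symplectization $(\R \times Y, d(e^s\lambda))$, and the asymptotic Reeb orbits of such a page-like curve furnish the desired homologically trivial collection. The hypothesis that pages have genus zero is what keeps the arithmetic genus of these curves controllable throughout the argument.

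First I would use the Thurston--Winkelnkemper construction to produce an adapted contact form $\lambda_0$ with $\ker \lambda_0 = \xi$, write the given $\lambda$ as $f\lambda_0$ for some positive function $f$, and interpolate via $\lambda_t \eqdef ((1-t)+tf)\lambda_0$. On $\R\times Y$ I would pick a family of $\R$-invariant almost complex structures $J_t$ compatible with $d(e^s\lambda_t)$, mapping $\partial_s$ to $R_{\lambda_t}$ and preserving $\xi$. At $t=0$, each page $\Sigma$ lifts to an embedded finite-energy $J_0$-holomorphic punctured sphere $u_0 : S^2 \setminus \{p_1,\dots,p_k\} \to \R\times Y$ whose positive asymptotic Reeb orbits are the binding components, with multiplicities equal to how often $\partial \Sigma$ wraps around each. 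Since $\Sigma$ is a $2$-chain whose oriented boundary is precisely this collection of binding circles, the asymptotic Reeb orbits of $u_0$ already sum to zero in $H_1(Y)$.

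Next I would consider the parametric moduli space $\M \eqdef \{(t,u) : u \text{ is a } J_t\text{-holomorphic genus zero curve with } k \text{ positive punctures and the prescribed asymptotic homology data}\}$. Standard Fredholm theory and a generic perturbation of the family $J_t$ make $\M$ a manifold, and the SFT compactness theorem of Bourgeois--Eliashberg--Hofer--Wysocki--Zehnder gives properness of the projection $\M \to [0,1]$ modulo holomorphic buildings. Exactness of the symplectization rules out closed sphere bubbling, and the genus zero hypothesis forces every component of every limit building to have genus zero. I would then argue, by cobordism inside $\M$, that the projection is surjective, so at $t=1$ one obtains either a smooth $J_1$-holomorphic curve or a limit building; in either case its outermost positive asymptotic ends form a nonempty finite collection $\{\gamma_i\}$ of Reeb orbits of $R_\lambda$, and conservation of the asymptotic homology class along the deformation yields $\sum_i [\gamma_i] = 0$ in $H_1(Y)$.

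The main obstacle is sustaining the moduli space $\M_t$ through the entire deformation. One must rule out degenerations in which the whole page-like curve ``escapes to infinity'' or collapses into trivial cylinders, and one must control the arithmetic of multiply covered asymptotic orbits so that the asymptotic homology class is preserved under breaking. The genus zero assumption is precisely what allows a Bishop-type family of pseudoholomorphic planes/punctured spheres to be constructed and maintained through the family $\lambda_t$; the delicate transversality and compactness analysis at the punctures, combined with Hofer-energy bounds along the family, is the technical heart of the Abbas--Cieliebak--Hofer argument, and it is why the method is currently limited to the planar case.
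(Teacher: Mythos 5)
The paper itself does not prove this theorem: it is quoted from Abbas--Cieliebak--Hofer \cite{ach}, with only the remark that the orbits arise as the positive asymptotic limits of a punctured holomorphic sphere in $\R\times Y$ with only positive ends. Your sketch does follow the actual strategy of \cite{ach}: realize the pages of the planar open book as finite-energy holomorphic curves for an adapted form $\lambda_0$, interpolate $\lambda_t=((1-t)+tf)\lambda_0$, and carry the moduli space of genus-zero curves with only positive punctures across the family using SFT compactness; the relation $\sum_i[\gamma_i]=0$ then comes for free because the compactified curve (or building) is a $2$-chain in $Y$ whose boundary is exactly the collection of outermost positive asymptotic orbits. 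So the architecture is right, and your observation that a broken limit at $t=1$ still suffices is correct.

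As a proof, however, the proposal has gaps precisely where the technical heart of \cite{ach} lies. First, properness of the projection $\M\to[0,1]$ needs a uniform Hofer-energy bound along the deformation; by Stokes's theorem that energy equals the total $\lambda_t$-action of the positive asymptotic orbits, and since the asymptotic orbits are not fixed as $t$ varies (the binding need not remain a Reeb orbit of $\lambda_t$ for $t>0$), this bound is not automatic and must be established by controlling how the asymptotic action varies along the parametrized moduli space. Second, ``arguing by cobordism inside $\M$ that the projection is surjective'' presupposes transversality not only for the curves in $\M$ but for the boundary strata of its SFT compactification; limit buildings typically contain multiply covered components (e.g.\ branched covers of trivial cylinders over the asymptotic orbits), for which generic perturbation of $J_t$ does not achieve transversality, so the degenerations must be excluded or handled by other means --- this is where the genus-zero hypothesis does its real work, and why the method does not extend to higher-genus pages without modifying the Cauchy--Riemann equation. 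Finally, the input at $t=0$ is itself nontrivial: the interior of a page is merely transverse to the Reeb field, so its lift to $\{0\}\times Y$ is not holomorphic, and the existence of a holomorphic open book for a suitable adapted $(\lambda_0,J_0)$ must be constructed explicitly (near the binding and then globally), not just asserted.
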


We remark that the above theorems find Reeb orbits as ends of
punctured holomorphic spheres in $\R\times Y$ with only positive ends,
cf \S\ref{sec:HC}.  In the first theorem there is only one puncture,
while in the second theorem the number of punctures can be any positive
integer.

Colin and Honda used linearized contact homology (see \S\ref{sec:CCH}
below) to prove the Weinstein conjecture for contact three-manifolds
supported by open books in which the monodromy is periodic, and in
many cases where the monodromy is pseudo-Anosov.  See
\cite{colin-honda} for the precise statement.  In fact, for many
contact structures supported by open books with pseudo-Anosov
monodromy, they proved a much stronger statement: that for any contact
form, there are infinitely many free homotopy classes of loops that
contain an embedded Reeb orbit.

\section{Some strategies for proving the Weinstein conjecture}
\label{sec:strategy}

One naive strategy for proving the Weinstein conjecture might be to
try the following:
\begin{itemize}
\item
Define some kind of count of Reeb orbits with appropriate signs.
\item
Show that this count is a topological invariant.
\item
Calculate this invariant and show that it is nonzero.
\end{itemize}
This strategy is too simple for at least two reasons.  First of all,
often there are actually infinitely many embedded Reeb orbits (see
\S\ref{sec:ILB}), so it is not clear how to obtain a well-defined
count of them.  Second, even if the above difficulty can be overcome,
the signed count might be zero, despite the existence of some Reeb
orbits with opposite signs that cannot be eliminated.  In general, as
one deforms the contact form, pairs of Reeb orbits can be created or
destroyed (more complicated bifurcations such as period-doubling are
also possible), and one needs some way of keeping track of when this
can happen.

A more refined strategy, which avoids the above two problems, is
as follows:
\begin{itemize}
\item Define some kind of chain complex which is generated by Reeb
  orbits, such that, roughly speaking, there are differentials between
  pairs of Reeb orbits that can potentially be destroyed in a
  bifurcation.
\item
Show that the homology of this chain complex is a topological
invariant.
\item
Compute this homology and show that it is nontrivial.
\end{itemize}
It turns out that there does exist a chain complex along these lines
which is sufficient to prove the Weinstein conjecture.  However it is
not the first chain complex that one might think of, and the proof that
it works uses Seiberg-Witten theory.  We will now attempt
to explain this story.

\section{Prototype for a chain complex: Morse homology}

The prototype for the type of chain complex we want to consider is
Morse homology, which we now review.  We will not give any proofs, as
these require a fair bit of analysis; details can be found in
\cite{schwarz}.  There is also an interesting history of the
development of the Morse complex, for which we refer the reader to
\cite{bott}.

\subsection{Morse functions}

Let $X$ be an $n$-dimensional closed smooth manifold and let
$f:X\to\R$ be a smooth function.  A {\em critical point\/} of $f$ is a
point $p\in X$ such that $0=df_p:T_pX\to\R$.  The basic goal of Morse
theory is to relate the critical points of $f$ to the topology of $X$.
A first question is, on a given $X$, what is the minimum number of
critical points that a smooth function $f$ can have?

To make this question easier, we can require that the critical points
of $f$ be ``generic'', in a sense which we now specify.  If $p\in X$
is a critical point of $f:X\to\R$, define the {\em Hessian\/}
\begin{equation}
\label{eqn:HessianPairing}
H(f,p): T_pX\tensor T_pX \longrightarrow \R
\end{equation}
as follows.  Let $\psi:X\to T^*X$ denote the section corresponding to
$df$.  Then $H(f,p)$ is the composition
\[
T_pX \stackrel{d\psi_p}{\longrightarrow} T_{(p,0)}T^*X = T_pX \oplus
T_p^*X \stackrel{\pi}{\longrightarrow} T_p^*X,
\]
where $\pi$ denotes the projection onto the second factor.  To be more
explicit, if $(x_1,\ldots,x_n)$ are local coordinates on $X$ centered
at $p$, then
\[
H(f,p)\left(\frac{\partial}{\partial x_i}, \frac{\partial}{\partial
    x_j}\right) = \frac{\partial^2f}{\partial x_i\partial x_j}.
\]
In particular, $H(f,p)$ is a symmetric bilinear form.  If one chooses
a metric on $X$, then the Hessian can be identified with a
self-adjoint operator
\begin{equation}
\label{eqn:HessianOperator}
H(f,p): T_pX \longrightarrow T_pX.
\end{equation}
The critical point $p$ is said to be {\em nondegenerate\/} if the
Hessian pairing \eqref{eqn:HessianPairing} is nondegenerate, or
equivalently the Hessian operator \eqref{eqn:HessianOperator} does not have
zero as an eigenvalue, or equivalently the graph of $df$ in $T^*X$ is
transverse to the zero section at $(p,0)$.  In particular,
nondegenerate critical points are isolated in $X$.

We say that $f$ is a {\em Morse function\/} if all of its critical
points are nondegenerate.  One can show that ``generic'' smooth
functions are Morse.  More precisely, the set of Morse functions is
open and dense in the set of all smooth functions $f:X\to\R$, with the
$C^\infty$ topology.  We can now ask, what is the minimum possible number of
critical points of a Morse function on $X$?

One could start by counting the critical points with signs.  If $p$ is
a (nondegenerate) critical point of $f$, define the {\em index\/} of
$p$, denoted by $\op{ind}(p)$, to be the maximal dimension of a
subspace on which the Hessian pairing \eqref{eqn:HessianPairing} is
negative definite, or equivalently the number of negative eigenvalues,
counted with multiplicity, of the Hessian operator
\eqref{eqn:HessianOperator}.  For example, a local minimum has index
$0$, and a local maximum has index $n$.  It turns out that an
appropriate sign with which to count an index $i$ critical point is
$(-1)^i$, and the signed count is the Euler characteristic of $X$.
That is, if $c_i(f)$ denotes the number of critical points of index
$i$, then
\[
\sum_{i=0}^n (-1)^ic_i(f) = \chi(X).
\]
One can prove this by choosing a metric on $X$, applying the
Poincar\'{e}-Hopf index theorem to the resulting gradient
vector field $\nabla f$, and checking that the sign of the zero of
$\nabla f$ at an index $i$ critical point is $(-1)^i$.  In
particular, the number of critical points is at least $|\chi(X)|$.
But if $\chi(X)=0$ then this tells us nothing.

\subsection{The Morse complex}
\label{sec:MC}

One way to obtain better lower bounds on the number of critical points
is to consider ``gradient flow lines'' between critical points and
package these into a chain complex.  Here is how this works.

Choose a metric $g$ on $X$ and use it to define the gradient vector
field $\nabla f$.  If $p$ and $q$ are critical points, a (downward)
{\em gradient flow line\/} from $p$ to $q$ is a map $\gamma:\R\to X$
such that
\[
\frac{d\gamma(s)}{ds} = \nabla f (\gamma(s))
\]
and $\lim_{s\to +\infty}\gamma(s)=p$ and $\lim_{s\to
  -\infty}\gamma(s)=q$.  Let $\mathcal{M}(p,q)$ denote the set of
downward gradient flow lines from $p$ to $q$.

If the metric $g$ is generic, then $\mathcal{M}(p,q)$ is naturally a
manifold of dimension
\begin{equation}
\label{eqn:dimMpq}
\dim \mathcal{M}(p,q) = \op{ind}(p)-\op{ind}(q).
\end{equation}
To see why, for $s\in\R$ let $\psi_s:X\to X$ denote the time $s$ flow
of the vector field $\nabla f$.  The {\em descending manifold\/} of a
critical point $p$ is the set
\[
\mathcal{D}(p) \eqdef \{x\in X \mid \lim_{s\to +\infty}\psi_s(x) =
p\}.
\]
Informally, this is the set of points in $X$ that ``can be reached by
downward gradient flow starting at $p$''.  Similarly, the {\em
  ascending manifold\/} of a critical point $q$ is
\[
\mathcal{A}(q) \eqdef \{x\in X \mid \lim_{s\to -\infty}\psi_s(x)=q\}.
\]
One can show that the descending manifold $\mathcal{D}(p)$ is a
smoothly embedded open ball in $X$ of dimension $\op{ind}(p)$, and
also $T_p\mathcal{D}(p)$ is the negative eigenspace of the Hessian
\eqref{eqn:HessianOperator}.  Likewise, $\mathcal{A}(q)$ is a smoothly
embedded open ball of dimension $n-\op{ind}(q)$.

It follows from the definitions that there is a bijection
\[
\mathcal{M}(p,q) \stackrel{\simeq}{\longrightarrow} \mathcal{D}(p) \cap
\mathcal{A}(q)
\]
sending a flow line $\gamma$ to the point $\gamma(0)\in X$.  Consequently,
if $\mathcal{D}(p)$ is transverse to $\mathcal{A}(q)$, then equation
\eqref{eqn:dimMpq} follows by dimension counting.  The pair $(f,g)$ is
said to be {\em Morse-Smale\/} if $\mathcal{D}(p)$ is transverse to
$\mathcal{A}(q)$ for every pair of critical points $p,q$.  One can show
that for a given Morse function $f$, for generic metrics $g$ the pair
$(f,g)$ is Morse-Smale.  Henceforth we assume by default that this
condition holds.  Observe also that $\R$ acts on $\mathcal{M}(p,q)$ by
precomposition with translations, and if $p\neq q$ then this action is
free, so that
\begin{equation}
\label{eqn:dimension}
\dim (\mathcal{M}(p,q)/\R) = \op{ind}(p) - \op{ind}(q) - 1.
\end{equation}
In particular, if $\op{ind}(q)\ge \op{ind}(p)$ then $\mathcal{M}(p,q)$ is
empty (except when $p=q$), and if $\op{ind}(q)=\op{ind}(p)-1$ then
$\mathcal{M}(p,q)/\R$ is discrete.

We now define the {\em Morse complex\/} $C_*^{\op{Morse}}(X,f,g)$ as
follows.  The chain module in degree $i$ is the free $\Z$-module
generated by the index $i$ critical points:
\[
C_i^{\op{Morse}}(X,f,g) \eqdef \Z\{p\in X\mid df_p=0, \; \op{ind}(p)=i\}.
\]
The differential $\partial: C_i^{\op{Morse}}(X,f,g) \to
C_{i-1}^{\op{Morse}}(X,f,g)$ is defined by counting gradient flow
lines as follows:  if $p$ is an index $i$ critical point, then
\[
\partial p \eqdef \sum_{\op{ind}(p)-\op{ind}(q)=1}
\#\frac{\mathcal{M}(p,q)}{\R} \cdot q.
\]
Here `$\#$' denotes a signed count.  We wil not say more about the
signs here except to note that the signs are determined by choices of
orientations of the descending manifolds of the critical points, but
the chain complexes resulting from different sign choices are
canonically isomorphic to each other.

To show that this chain complex is well-defined, one must prove that
$\partial$ is well-defined, ie $\mathcal{M}(p,q)/\R$ is finite
whenever $\op{ind}(p)-\op{ind}(q)=1$, and one must also prove that
$\partial^2=0$.  The first step is to prove a compactness theorem
which asserts that given critical points $p\neq q$, any sequence
$\{\gamma_n\}_{n=1}^\infty$ in $\mathcal{M}(p,q)/\R$ has a subsequence
which converges in an appropriate sense to a ``$k$-times broken flow
line'' from $p$ to $q$.  This is a tuple
$(\widehat{\gamma}_0,\ldots,\widehat{\gamma}_k)$ for some $k\ge 0$,
such that there are critical points $p=r_0,r_1,\ldots,r_{k+1}=q$ for
which $\widehat{\gamma}_i\in \mathcal{M}(r_i,r_{i+1})/\R$ is a
nonconstant flow line.

If $\op{ind}(p)-\op{ind}(q)=1$, then the Morse-Smale condition implies
that there are no $k$-times broken flow lines with $k>0$, so
$\mathcal{M}(p,q)/\R$ is compact, and hence finite, so $\partial$ is
well-defined.

If $\op{ind}(p)-\op{ind}(q)=2$, then $\mathcal{M}(p,q)/\R$ is not
necessarily compact and may contain a sequence converging to a
once-broken flow line.  However we can add in these broken flow lines
to obtain a compactification $\overline{\mathcal{M}(p,q)/\R}$ of
$\mathcal{M}(p,q)/\R$.  it turns out that this is a compact oriented
1-manifold with boundary, whose boundary as an oriented manifold is
\begin{equation}
\label{eqn:compactification1}
\partial \overline{\mathcal{M}(p,q)/\R} =
\bigcup_{\op{ind}(p)-\op{ind}(r)=1}\frac{\mathcal{M}(p,r)}{\R} \times
\frac{\mathcal{M}(r,q)}{\R}.
\end{equation}
The main ingredient in the proof of this is a gluing theorem asserting
that each broken flow line $(\widehat{\gamma}_0,\widehat{\gamma}_1)$
with $\widehat{\gamma}_0\in\mathcal{M}(p,r)/\R$ and
$\widehat{\gamma}_1\in\mathcal{M}(r,q)/\R$ can be ``patched'' to an
unbroken flow line in a unique end of $\mathcal{M}(p,q)/\R$.  (One
also has to show that $\mathcal{M}(p,q)/\R$ can be oriented so that
the orientations on both sides of \eqref{eqn:compactification1}
agree.)

It follows from the boundary equation \eqref{eqn:compactification1}
that $\partial^2=0$.  Namely, counting the points on both sides of
\eqref{eqn:compactification1} with signs gives
\begin{equation}
\label{eqn:FTDT}
\#\partial\overline{\mathcal{M}(p,q)/\R} =
\sum_{\op{ind}(p)-\op{ind}(r)=1} \langle \partial p,r\rangle
\langle\partial r,q\rangle.
\end{equation}
Here $\langle \partial p,r\rangle\in\Z$ denotes the coefficient of $r$
in $\partial p$.  Thus the right hand side of \eqref{eqn:FTDT} is, by
definition, the coefficient $\langle\partial^2p,q\rangle$.  On the
other hand since a compact oriented 1-manifold has zero boundary
points counted with signs, the left hand side of \eqref{eqn:FTDT} is
zero.

\subsection{Morse homology}

The {\em Morse homology\/} $H_*^{\op{Morse}}(X,f,g)$ is the homology
of the above chain complex.

\begin{example}
  Consider a Morse function $f:S^2\to\R$ with two index $2$ critical
  points $x_1,x_2$, one index $1$ critical point $y$, and one index
  $0$ critical point $z$.  One can visualize $f$ as the height
  function on a ``heart-shaped'' sphere embedded in $\R^3$.  Pick any
  metric $g$ on $S^2$; it turns out that $(f,g)$ will automatically
  be Morse-Smale in this example.  There is (up to reparametrization)
  a unique downward gradient flow line from each $x_i$ to $y$.  There
  are two gradient flow lines from $y$ to $z$.  The latter turn out to
  have opposite signs, and so for suitable orientation choices the
  Morse complex is given by
\begin{gather*}
  C_2^{\op{Morse}}=\Z\{x_1,x_2\}, \quad C_1^{\op{Morse}}=\Z\{y\},
  \quad C_0^{\op{Morse}}=\Z\{z\},\\
\partial x_1 = y, \quad \partial x_2 = -y, \quad \partial y = 0.
\end{gather*}
Thus $H_2^{\op{Morse}}\simeq \Z$, generated by $x_1+x_2$;
$H_1^{\op{Morse}}=0$; and $H_0^{\op{Morse}}=\Z$, generated by $z$.
\end{example}

The above example illustrates a fundamental theorem in the subject:

\begin{theorem}
\label{thm:MorseHomology}
There is a canonical isomorphism between Morse homology and singular
homology,
\begin{equation}
\label{eqn:MHSH}
H_*^{\op{Morse}}(X,f,g) \simeq H_*(X).
\end{equation}
\end{theorem}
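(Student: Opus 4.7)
The plan is to factor the proof into two steps: first prove that $H_*^{\op{Morse}}(X,f,g)$ is canonically independent of the Morse-Smale pair $(f,g)$, then identify it with $H_*(X)$ for one convenient choice.

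For the independence, given two pairs $(f_0,g_0)$ and $(f_1,g_1)$, I would pick a generic smooth family $(f_s,g_s)_{s\in\R}$ equal to $(f_0,g_0)$ for $s\le 0$ and $(f_1,g_1)$ for $s\ge 1$, and define a \emph{continuation chain map} $\Phi: C_*^{\op{Morse}}(X,f_0,g_0)\to C_*^{\op{Morse}}(X,f_1,g_1)$ by counting, for each pair of same-index critical points $p$ of $f_0$ and $q$ of $f_1$, the isolated solutions $\gamma:\R\to X$ of the nonautonomous equation $d\gamma/ds = \nabla_{g_s} f_s(\gamma)$ asymptotic to $p$ at $-\infty$ and $q$ at $+\infty$. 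The same compactness-plus-gluing package used to prove $\partial^2=0$ in \S\ref{sec:MC} shows that $\Phi$ is a chain map, and a generic one-parameter family of such homotopies produces a chain homotopy between any two continuation maps, so $\Phi_*$ is well-defined. A standard concatenation argument (composing a homotopy with its reverse is homotopic to the constant homotopy) then shows $\Phi_*$ is an isomorphism, independent of all choices.

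For the identification with $H_*(X)$, I would specialize to a Morse-Smale pair for which the descending manifolds $\mc{D}(p)$ assemble into a CW decomposition of $X$: each $\mc{D}(p)$ is an open ball of dimension $\op{ind}(p)$, and the compactification $\overline{\mc{D}(p)}$ comes equipped with a continuous attaching map from its boundary sphere into the $(\op{ind}(p)-1)$-skeleton $\bigcup_{\op{ind}(q)\le\op{ind}(p)-1}\overline{\mc{D}(q)}$. The standard cellular formula expresses the cellular boundary of $\mc{D}(p)$ as a sum over index $(\op{ind}(p)-1)$ critical points $q$ of the degree of the attaching map composed with the collapse $\overline{\mc{D}(q)}\to\overline{\mc{D}(q)}/\partial\overline{\mc{D}(q)}$. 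A local analysis near $q$ identifies this degree with $\#(\mc{M}(p,q)/\R)$, because the points of $\partial\overline{\mc{D}(p)}$ lying in $\overline{\mc{D}(q)}$ are precisely the limits of gradient flow lines from $p$ to $q$. Hence the Morse complex coincides with the cellular chain complex of this CW structure, whose homology is $H_*(X)$ by the classical equivalence of cellular and singular homology.

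The main obstacle is verifying that the descending manifolds genuinely assemble into a CW complex---that each attaching map extends continuously from the open cell to the closed disk, and that the closure of each descending manifold is contained in the union of descending manifolds of smaller or equal index. This is a nontrivial theorem requiring an additional transversality condition on the closures of unstable manifolds, and its proof ultimately uses the same broken-flow-line gluing analysis that already appears in \S\ref{sec:MC}. An approach that sidesteps the CW structure is to construct a direct chain map $\Psi: C_*^{\op{Morse}}(X,f,g)\to C_*^{\op{sing}}(X;\Z)$ sending $p$ to $\overline{\mc{D}(p)}$ viewed as a singular chain via a chosen triangulation; the boundary formula \eqref{eqn:compactification1}, applied one dimension higher, yields $\partial\Psi=\Psi\partial$, and induction on the sublevel sets $f^{-1}(-\infty,c]$ using the long exact sequence across each critical value shows that $\Psi_*$ is an isomorphism. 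Either way, all the serious analytic content is already contained in the compactness and gluing theorems underlying the differential.
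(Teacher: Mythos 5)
The paper does not actually prove Theorem~\ref{thm:MorseHomology}: it is stated as a known fundamental result, with the proofs deferred to Schwarz's book, and the surrounding text (\S\ref{sec:continuation}) deliberately presents the continuation-map argument as a way to establish invariance of $H_*^{\op{Morse}}$ \emph{without} making the comparison with singular homology. So there is no in-paper argument to match against; your proposal should be judged on its own. As such it is a correct outline of the standard proof. Your first step reproduces, up to a harmless reversal of conventions, exactly the continuation-map machinery of \S\ref{sec:continuation}, and your second step is where the real content of the theorem lies. You have also correctly isolated the genuine difficulty: the Morse--Smale condition gives the stratification $\overline{\mathcal{D}(p)}\subset\bigcup_{\op{ind}(q)\le\op{ind}(p)}\mathcal{D}(q)$ easily, but the continuous extension of the characteristic maps to the closed disks (equivalently, a manifold-with-corners structure on the compactified descending manifolds) is a serious theorem, not a formal consequence of \eqref{eqn:compactification1}; your fallback via a chain map $\Psi$ to singular chains and induction over sublevel sets across critical values is the safer classical route and is the one carried out in the references. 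Two small cautions: in the degree computation, the relevant points of $\partial\overline{\mathcal{D}(p)}$ are those lying in $\mathcal{D}(q)$ itself (they sweep out $(\mathcal{M}(p,q)/\R)\times\mathcal{D}(q)$), not all of $\overline{\mathcal{D}(q)}$; and to justify the word ``canonical'' in the statement one must additionally check that the resulting isomorphisms commute with the continuation maps, a point the paper flags at the end of \S\ref{sec:continuation} and which your write-up should not omit.
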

An immediate corollary is that there must be enough critical points to
generate a chain complex whose homology is $H_*(X)$.  In particular:

\begin{corollary}
  If $f$ is a Morse function on a closed smooth manifold $X$, then
  $c_i(f)\ge \op{rank}(H_i(X))$.
\end{corollary}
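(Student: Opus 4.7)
The plan is to deduce the inequality directly from Theorem~\ref{thm:MorseHomology}. First I would choose a Riemannian metric $g$ on $X$ such that the pair $(f,g)$ is Morse-Smale; as noted in \S\ref{sec:MC}, generic metrics have this property, so such a $g$ exists. With this choice in hand, the Morse complex $C_*^{\op{Morse}}(X,f,g)$ is well-defined, and by Theorem~\ref{thm:MorseHomology} its homology is isomorphic to the singular homology $H_*(X)$.

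Next I would use the chain-level definition of the Morse complex. By construction $C_i^{\op{Morse}}(X,f,g)$ is the free $\Z$-module on the index $i$ critical points of $f$, so its rank is exactly $c_i(f)$. Because $H_i^{\op{Morse}}(X,f,g)$ is a subquotient $\Ker(\partial_i)/\op{Im}(\partial_{i+1})$ of the free abelian group $C_i^{\op{Morse}}(X,f,g)$, its rank is bounded above by $\op{rank} C_i^{\op{Morse}}(X,f,g)$. Chaining these facts yields
\[
c_i(f) = \op{rank} C_i^{\op{Morse}}(X,f,g) \;\ge\; \op{rank} H_i^{\op{Morse}}(X,f,g) \;=\; \op{rank} H_i(X),
\]
which is the desired bound. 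Note that the choice of $g$ plays no role in the conclusion, since $c_i(f)$ depends only on $f$ while the right-hand side is a topological invariant of $X$; the Morse-Smale metric is merely a scaffold used to set up the chain complex.

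There is essentially no obstacle here: the corollary is a purely formal consequence of the existence of a chain complex, generated by the critical points in the stated degrees, whose homology computes $H_*(X)$. The only point to verify is the rank inequality $\op{rank}(\Ker\partial_i/\op{Im}\partial_{i+1}) \le \op{rank} C_i^{\op{Morse}}$, which is standard for finitely generated abelian groups since both $\Ker$ and quotients can only decrease (or preserve) rank. I would remark in passing that the same kind of reasoning, applied a little more carefully to alternating sums of ranks of kernels and images, yields the stronger Morse inequalities; but for the stated corollary the one-line rank bound above suffices.
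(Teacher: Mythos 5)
Your proposal is correct and is exactly the argument the paper intends: the corollary is stated as an immediate consequence of Theorem~\ref{thm:MorseHomology}, since the homology of a chain complex with $c_i(f)$ generators in degree $i$ is a subquotient of a free abelian group of rank $c_i(f)$. Your added care about choosing a Morse--Smale metric and about ranks of subquotients just makes explicit what the paper leaves implicit.
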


\subsection{Continuation maps}
\label{sec:continuation}

We will later construct analogues of Morse homology which generally do
not have interpretations in terms of previously known invariants such
as singular homology.  What is most important here as a model for
these later constructions is that the Morse homology
$H_*^{\op{Morse}}(X,f,g)$ is a topological invariant of $X$ which does
not depend on $f$ or $g$.  One can prove this directly, without making
the comparison with singular homology, as follows.

Let $(f_0,g_0)$ and $(f_1,g_1)$ be two Morse-Smale pairs.  Let
$\{(f_s,g_s)\mid s\in\R\}$ be a smooth family of pairs of functions
and metrics on $X$ such that $(f_s,g_s)=(f_0,g_0)$ for $s\le 0$ and
$(f_s,g_s)=(f_1,g_1)$ for $s\ge 1$.  We do not (and in general cannot)
assume that the pair $(f_s,g_s)$ is Morse-Smale for all $s$.  One now
defines a map
\[
\Phi:C_*^{\op{Morse}}(X,f_1,g_1) \longrightarrow
C_*^{\op{Morse}}(X,f_0,g_0),
\]
called the {\em continuation map\/}, as follows.  If $p_0$ is an index $i$
critical point of $f_0$ and $p_1$ is an index $i$ critical point of
$f_1$, then $\langle\Phi(p_1),p_0\rangle$ is a signed count of maps
$\gamma:\R\to X$ satisfying
\[
\frac{d\gamma(s)}{ds} = \nabla f_s(\gamma(s))
\]
and $\lim_{s\to -\infty}\gamma(s) = p_0$ and
$\lim_{s\to+\infty}\gamma(s)=p_1$.  Here the gradient of $f_s$ is
computed using the metric $g_s$.  Similarly to the proof that
$\partial$ is well defined and $\partial^2=0$, one can show that if
the family of metrics $\{g_s\}$ is generic then $\Phi$ is a
well-defined chain map.  For example, if the family $\{(f_s,g_s)\}$ is
constant then $\Phi$ is the identity map.  One can also show that up
to chain homotopy, $\Phi$ depends only on the homotopy class of the
path $\{(f_t,g_t)\}$ rel endpoints, which since the space of pairs
$(f,g)$ is contractible means that up to chain homotopy $\Phi$ depends
only on $(f_0,g_0)$ and $(f_1,g_1)$.  Finally, related considerations
show that if
\[
\Phi': C_*^{\op{Morse}}(X,f_2,g_2) \longrightarrow
C_*^{\op{Morse}}(X,f_1,g_1)
\]
is the continuation map induced by a generic path from $(f_1,g_1)$ to
$(f_2,g_2)$, then the composition $\Phi\Phi'$ is chain homotopic to
the continuation map induced by a path from $(f_0,g_0)$ to
$(f_2,g_2)$.  If $(f_2,g_2)=(f_0,g_0)$, then it follows that
$\Phi\Phi'$ and $\Phi'\Phi$ are chain homotopic to the respective
identity maps.  Thus $\Phi$ induces an isomorphism on homology.

It follows from the above homotopy properties of continuation maps
that the Morse homologies $H_*^{\op{Morse}}(X,f,g)$ for different
Morse-Smale pairs $(f,g)$ are canonically isomorphic to each other via
continuation maps.  It turns out that the isomorphism \eqref{eqn:MHSH}
commutes with these continuation maps.

\subsection{Spectral flow}
\label{sec:SF}

We now recall a more analytical way to understand the dimension
formula \eqref{eqn:dimension}, which is used in the infinite
dimensional variants of Morse theory to be considered later.

Given critical points $p$ and $q$, let $\mc{P}$ denote the space of
smooth paths $\gamma:\R\to X$ satisfying
$\lim_{s\to+\infty}\gamma(s)=p$ and $\lim_{s\to-\infty}\gamma(s)=q$.
An element $\gamma\in\mc{P}$ is a gradient flow line if and only if it
satisfies the equation
\begin{equation}
\label{eqn:GFL}
\frac{d\gamma(s)}{ds} - \nabla f(\gamma(s)) = 0.
\end{equation}
Note that the left hand side of \eqref{eqn:GFL} is a section of the
pullback bundle $\gamma^*TX$ over $\R$.

Now assume that $\gamma$ is a flow line.  To understand flow lines
near $\gamma$, we consider the linearization of the equation
\eqref{eqn:GFL}.  This could be regarded a linear operator $D_\gamma$
which sends the tangent space to $\mc{P}$ at $\gamma$ (namely the
space of smooth sections of $\gamma^*TX$ which converge to $0$ as
$s\to \pm\infty$) to $\gamma^*TX$.  But in order to apply the tools of
functional analysis one wants to work with suitable Banach space
completions, for example to regard $D_\gamma$ as an operator from the
Sobolev space of $L^2_1$ sections of $\gamma^*TX$ to the space of
$L^2$ sections of $\gamma^*TX$.  To write this operator more
explicitly, choose a trivialization of $\gamma^*TX$ which converges to
some fixed trivializations of $T_pX$ and $T_qX$ as $s\to\pm\infty$.
Then with respect to this trivialization, the operator $D_\gamma$ has
the form
\begin{equation}
\label{eqn:Dgamma}
D_\gamma=\partial_s + A_s: L^2_1(\R,\R^n) \longrightarrow L^2(\R,\R^n),
\end{equation}
where $A_s$ is an $n\times n$ matrix depending on $s\in\R$.  Moreover
as $s\to\pm\infty$, the matrix $A_s$ converges to the (negative) Hessians,
\begin{equation}
\label{eqn:AHessian}
\lim_{s\to+\infty}A_s = -H(f,p), \quad\quad \lim_{s\to-\infty}A_s =
-H(f,q).
\end{equation}

The first step in the analytic treatment is
to show that if $f$ is Morse then the operator $D_\gamma$ is Fredholm;
and if the metric $g$ is generic, then for all flow lines $\gamma$ the
operator $D_\gamma$ is surjective, and $\mc{M}(p,q)$ is a smooth
manifold with $T_\gamma\mc{M}(p,q)=\Ker(D_\gamma)$.  In particular
$\dim\mc{M}(p,q)=\op{ind}(D_\gamma)$.

To compute the index of $D_\gamma$, there is a general principle that
if $\{A_s\}$ is a family of operators on a Hilbert space satisfying
appropriate technical hypotheses, then the index of $\partial_s+A_s$
from $L^2_1$ to $L^2$ is the {\em spectral flow\/} of the family
$\{A_s\}$, which roughly speaking is the number of eigenvalues of
$A_s$ that cross from negative to positive as $s$ goes from $-\infty$
to $+\infty$, minus the number of eigenvalues that cross from positive
to negative.  For some theorems realizing this principle in different
situations see eg \cite{robbin-salamon,km}.  For the operator
\eqref{eqn:Dgamma}, no additional technical hypotheses are necessary
and the spectral flow is simply the number of positive eigenvalues of
$\lim_{s\to+\infty}A_s$ minus the number of positive eigenvalues of
$\lim_{s\to-\infty}A_s$.  Using \eqref{eqn:AHessian} one obtains
\[
\dim\mc{M}(p,q) = \op{ind}(D_\gamma) = -\op{ind}(p) -(-\op{ind}(q)),
\]
which recovers \eqref{eqn:dimension}.

More generally we will need to apply the ``index=spectral flow''
principle to
\[
D = \partial_s + A_s: L^2_1(\R\times Y,E)\longrightarrow L^2(Y,E)
\]
where $\{A_s\}$ is a family of elliptic first-order differential
operators on a vector bundle $E$ over a manifold $Y$ parametrized by
$s\in\R$, which converge as $s\to\pm\infty$ to self-adjoint operators
with zero kernel.  The operator \eqref{eqn:Dgamma} corresponds to the
case where $Y$ is a point.  Later in this article $Y$ will be a circle
(for cylindrical contact homology) or a three-manifold (for
Seiberg-Witten Floer homology).

\section{First attempt at a chain complex: cylindrical contact
  homology}
\label{sec:CCH}

Let $Y$ be a closed oriented $3$-manifold and let $\lambda$ be a
contact form on $Y$.  We would like to define an analogue of the Morse
complex on the loop space of $Y$, which is generated by Reeb orbits,
and whose differential counts an appropriate notion of ``flow lines''
between them.  Although the analogy with Morse homology breaks down
somewhat, this idea leads naturally to the definition of cylindrical
contact homology.  This theory can be used to prove the Weinstein
conjecture in many cases.  Although cylindrical contact homology can
be defined for contact manifolds of any odd dimension, for
definiteness we stick to the three-dimensional case.

\subsection{Nondegenerate Reeb orbits}
\label{sec:NRO}

We now explain the appropriate analogue of nondegenerate critical
point in this context.

Let $\gamma:\R/T\Z\to Y$ be a Reeb orbit.  Let $\psi_T:Y\to Y$ denote
the diffeomorphism obtained by flowing along the Reeb vector field for
time $T$.  This preserves the contact form, because by the definition
of Reeb vector field, the Lie derivative $\mc{L}_R\lambda=0$.  It
follows that for any $t\in \R/T\Z$, we have a symplectic linear map
\[
P_\gamma\eqdef d\psi_T:(\xi_{\gamma(t)},d\lambda) \longrightarrow
(\xi_{\gamma(t)},d\lambda).
\]
This map is called the {\em linearized return map\/}.

Another way to describe this map is as follows.  Let $D$ be a small
embedded disk in $Y$ centered at $\gamma(t)$ and transverse to
$\gamma$, such that $T_{\gamma(t)}D=\xi_{\gamma(t)}$.  For $x\in D$
close to the center, there is a unique point in $D$ which is reached
by following the Reeb flow for a time close to $T$.  This gives a
partially defined ``return map'' $\phi:D\to D$ which is defined near
the origin.  The derivative of this map at the origin is the
linearized return map $P_\gamma$.

We say that the Reeb orbit $\gamma$ is {\em nondegenerate\/} if
$P_\gamma$ does not have $1$ as an eigenvalue.  This condition does
not depend on the choice of $t\in\R/T\Z$, because the linearized
return maps for different $t$ are conjugate to each other.  If the
Reeb orbit $\gamma$ is nondegenerate then it is isolated, because Reeb
orbits close to $\gamma$ give rise to fixed points of the map $\phi$,
and the condition that $1-d\phi$ is invertible at the origin implies
that $\phi$ has no fixed points near the origin.

One can show that for a given contact structure $\xi$, for generic
contact forms $\lambda$, all Reeb orbits are nondegenerate.  We will
always assume unless otherwise stated that all Reeb orbits are
nondegenerate.

One can classify (nondegenerate) Reeb orbits into three types,
according to the eigenvalues $\lambda,\lambda^{-1}$ of the linearized
return map:
\begin{itemize}
\item
{\em elliptic\/}: $\lambda,\lambda^{-1}=e^{\pm 2\pi i \theta}$.
\item
{\em positive hyperbolic\/}: $\lambda,\lambda^{-1}>0$.
\item
{\em negative hyperbolic\/}: $\lambda,\lambda^{-1}<0$.
\end{itemize}

\subsection{Holomorphic cylinders}
\label{sec:HC}

The appropriate analogue of ``gradient flow line'' in this context is
a certain kind of holomorphic cylinder in $\R\times Y$.  We now
explain what these are.

In general, recall that a {\em complex structure\/} on an
even-dimensional real vector bundle $E\to X$ is a bundle map $J:E\to
E$ satisfying $J^2=-1$.  An {\em almost complex structure\/} on an
even-dimensional manifold $X$ is a complex structure $J$ on the
tangent bundle $TX$.  A {\em holomorphic curve\/}\footnote{Often these
  are instead called ``pseudoholomorphic curves'' or ``$J$-holomorphic
  curves'', in order to emphasize the fact that we are working with {\em
    almost\/} complex geometry, as opposed to complex manifolds.} in
$(X,J)$ is a map $u:\Sigma\to X$ where $\Sigma$ is a surface with an
almost complex structure $j$ (ie a Riemann surface), and $J\circ du =
du \circ j$.  Two holomorphic curves $u:(\Sigma,j)\to X$ and
$u':(\Sigma',j')\to X$ are considered equivalent if there is a
biholomorphic map $\phi:(\Sigma,j)\to (\Sigma',j')$ with $u=u'\circ
\phi$.  If $u$ is an embedding then the equivalence class of $u$ is
determined by its image.  That is, an embedded holomorphic curve in
$(X,J)$ is just a 2-dimensional submanifold $C\subset X$ such that
$J(TC)=TC$.

Returning now to the situation of interest:

\begin{definition}
\label{def:AACS}
Let $Y$ be a three-manifold with a contact form $\lambda$.  An almost
complex structure $J$ on the $4$-manifold $\R\times Y$ is {\em
  admissible\/} if:
\begin{enumerate}
\item
$J$ sends $\xi$ to itself, rotating $\xi$ positively with respect to
the orientation of $\xi$ given by $d\lambda$.
\item
If $s$ denotes the $\R$ coordinate on $\R\times Y$, then
$J(\partial_s)=R$.
\item
$J$ is invariant under the $\R$ action on $\R\times Y$ that translates
 $s$.
\end{enumerate}
\end{definition}
Note that the space of such $J$ is nonempty and contractible.  Indeed,
the choice of such a $J$ is equivalent to the choice of a complex
structure on the $2$-plane bundle $\xi$ over $Y$ which rotates
positively with respect to $d\lambda$.  Fix an admissible almost
complex structure $J$ on $Y$ below.

Observe that if $\gamma$ is an embedded Reeb orbit, then
$\R\times\gamma$ is an embedded holomorphic cylinder in $\R\times Y$.
This follows from condition (2) above.  More generally, we can study
holomorphic curves in $\R\times Y$ that are asymptotic to such
$\R$-invariant cylinders, or covers thereof, as the $\R$ coordinate
goes to plus or minus infinity.  To define what we mean by this,
consider a ``half-cylinder'' $[0,\infty)\times S^1$ or
$(-\infty,0]\times S^1$ with coordinates $s,t$, with the almost
complex structure $j$ sending $\partial_s$ to $\partial_t$.  Let
$\pi_\R:\R\times Y\to\R$ and $\pi_Y:\R\times Y\to Y$ denote the two
projections.  If $u:\Sigma\to\R\times Y$ is a holomorphic curve and if
$\gamma$ is a Reeb orbit (not necessarily embedded), we define a {\em
  positive end\/} of $u$ at $\gamma$ to be an end of $\Sigma$ which
can be parametrized as $[0,\infty)\times S^1$ with the almost complex
structure $j$ as above, such that
$\lim_{s\to\infty}\pi_\R(s,\cdot)=\infty$, and
$\lim_{s\to\infty}\pi_Y(s,\cdot)$ is a reparametrization of $\gamma$.
Likewise, a {\em negative end\/} of $u$ at $\gamma$ is an end of
$\Sigma$ which can be parametrized as $(-\infty,0]\times S^1$, with
the almost complex structure $j$ as above, such that
$\lim_{s\to-\infty}\pi_\R(s,\cdot)=-\infty$, and
$\lim_{s\to-\infty}\pi_Y(s,\cdot)$ is a reparametrization of $\gamma$.

If $\gamma_+$ and $\gamma_-$ are two Reeb orbits, define
$\mathcal{M}(\gamma_+,\gamma_-)$ to be the set of holomorphic
cylinders in $\R\times Y$ that have a positive end at $\gamma_+$ and a
negative end at $\gamma_-$.  It turns out that these holomorphic
cylinders are the appropriate ``gradient flow lines'' from $\gamma_+$
to $\gamma_-$.  Note that there is an $\R$ action on
$\mathcal{M}(\gamma_+,\gamma_-)$ given by translating the $\R$
coordinate on the target\footnote{This is not to be confused
  with the $\R\times S^1$ action on the set of holomorphic maps
  $\R\times S^1\to\R\times Y$ given by compositions with translations
  of the domain, which we have already modded out by in our definition
  of holomorphic curve.} space $\R\times Y$.  This action is free except on
the $\R$-invariant cylinders $\R\times\gamma$ in
$\mathcal{M}(\gamma,\gamma)$.

\subsection{The action functional}

Holomorphic cylinders in $\mc{M}(\gamma_+,\gamma_-)$ can be regarded
as ``gradient flow lines'' of the {\em symplectic action\/} functional
on the loop space of $Y$ defined by
\begin{equation}
\label{eqn:action}
\mc{A}(\gamma) \eqdef \int_{S^1}\gamma^*\lambda
\end{equation}
for $\gamma:S^1\to Y$.  Without trying to make this analogy precise,
let us just note the following essential lemma:

\begin{lemma}
\label{lem:actionDecreases}
  Suppose there exists a holomorphic cylinder $u\in
  \mathcal{M}(\gamma_+,\gamma_-)$.  Then
\[
\mathcal{A}(\gamma_+)\ge \mathcal{A}(\gamma_-),
\]
with equality if and only if $\gamma_+=\gamma_-$ and the image of $u$
is an $\R$-invariant cylinder.
\end{lemma}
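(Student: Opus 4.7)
The plan is a standard ``energy identity plus positivity'' argument: show that $u^*d\lambda$ is pointwise nonnegative on $\Sigma$, then use Stokes' theorem to identify its integral with $\mathcal{A}(\gamma_+) - \mathcal{A}(\gamma_-)$, and finally analyze when the pointwise inequality is saturated.

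First I would establish the pointwise estimate $u^*d\lambda \geq 0$, with equality exactly where the image of $du$ is contained in the $\R$-invariant plane field spanned by $\partial_s$ and $R$. At any point of $\R\times Y$, use the splitting $T(\R\times Y) = \R\langle \partial_s\rangle \oplus \R\langle R\rangle \oplus \xi$. By admissibility (Definition~\ref{def:AACS}), $J\partial_s = R$ and $J$ preserves $\xi$, rotating it positively with respect to $d\lambda$; in particular $d\lambda(v,Jv) \geq 0$ for $v \in \xi$, with equality iff $v=0$. Since $d\lambda$ is pulled back from $Y$ and annihilates $\partial_s$ and $R$, for any tangent vector $w$ to $\Sigma$ with image having $\xi$-component $w_\xi$, one computes $u^*d\lambda(w, jw) = d\lambda(du(w), J\,du(w)) = d\lambda(w_\xi, Jw_\xi) \geq 0$, with equality iff $w_\xi = 0$.

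Next I would apply Stokes. Parametrize the ends of $\Sigma$ as half-cylinders $[R_0,\infty)\times S^1$ and $(-\infty,-R_0]\times S^1$ on which $\pi_Y \circ u$ converges to $\gamma_\pm$ in an appropriate sense. For each large $R$, Stokes on the compact piece $\Sigma_R$ between $\{-R\}\times S^1$ and $\{R\}\times S^1$ gives
\[
\int_{\Sigma_R} u^*d\lambda = \int_{\{R\}\times S^1} u^*\lambda - \int_{\{-R\}\times S^1} u^*\lambda.
\]
Taking $R\to\infty$ and using the convergence at the ends (the loops $\pi_Y\circ u(\pm R,\cdot)$ approach reparametrizations of $\gamma_\pm$), the right-hand side tends to $\mathcal{A}(\gamma_+) - \mathcal{A}(\gamma_-)$, while the left-hand side tends to $\int_\Sigma u^*d\lambda \geq 0$ by the first step. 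This yields the inequality.

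Finally, the equality case: if $\mathcal{A}(\gamma_+) = \mathcal{A}(\gamma_-)$ then $u^*d\lambda \equiv 0$, so at every point the $\xi$-component of $du$ vanishes, meaning the image of $du$ lies in $\R\langle \partial_s\rangle \oplus \R\langle R\rangle$ everywhere. Combined with the Cauchy--Riemann equation $J\circ du = du\circ j$ and the fact that $u$ has a positive end at $\gamma_+$ (in particular is nonconstant, with $\pi_\R\circ u$ unbounded above), this forces the image of $u$ to be the $\R$-invariant cylinder $\R\times\gamma_+$; then the negative asymptotic condition gives $\gamma_- = \gamma_+$. The main obstacle is keeping the pointwise linear-algebra calculation honest and handling the boundary terms carefully as $R\to\infty$; both are routine once the admissibility conditions and the asymptotic parametrizations of ends are used.
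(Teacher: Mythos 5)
Your proposal is correct and follows essentially the same route as the paper: pointwise nonnegativity of $u^*d\lambda$ from the admissibility conditions, Stokes' theorem to identify $\int u^*d\lambda$ with $\mathcal{A}(\gamma_+)-\mathcal{A}(\gamma_-)$, and analysis of the equality case via the vanishing of the $\xi$-component of $du$. You simply spell out the linear algebra and the truncation argument that the paper's terser proof leaves implicit.
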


\begin{proof}
Let $u:\R\times S^1\to \R\times Y$ be a holomorphic cylinder in
$\mathcal{M}(\gamma_+,\gamma_-)$.  By Stokes' theorem,
\[
\mc{A}(\gamma_+) - \mc{A}(\gamma_-) = \int_{\R\times S^1}u^*d\lambda.
\]
(The integral on the right converges because of the asymptotics of
$u$.)  By condition (1) in the definition of admissible almost complex
structure, $u^*d\lambda\ge 0$ at each point in $\R\times S^1$, with
equality only where $u$ is tangent to $\R$ cross the Reeb direction.
\end{proof}

Later the symplectic action will play a key role in Taubes's proof of
the Weinstein conjecture.

\subsection{The chain complex}

We can now define an analogue of the Morse complex in this setting.
We will give a ``quick and dirty'' definition to save space; for the
more general context into which this definition fits, see
\cite{egh}.

To start, for reasons we will explain below, one must discard certain
``bad'' Reeb orbits for the construction to work:

\begin{definition}
  A Reeb orbit $\gamma$ is said to be {\em bad\/} if it is the
  $k$-fold iterate of a negative
  hyperbolic orbit with $k$ even.  Otherwise $\gamma$ is said to be {\em
    good\/}.
\end{definition}

Now fix $\Gamma\in H_1(Y)$.  Define $CC(Y,\lambda,\Gamma)$ to be the
free $\Q$-module generated by the good Reeb orbits $\gamma$
representing the homology class $\Gamma$.  One then defines a
differential
\[
\partial:CC(Y,\lambda,\Gamma) \longrightarrow CC(Y,\lambda,\Gamma)
\]
as follows.  Fix a generic admissible almost complex structure $J$ on
$\R\times Y$.  If $\gamma_+$ is a good Reeb orbit, then
\begin{equation}
\label{eqn:CCHD}
\partial\gamma_+ \eqdef
\sum_{\gamma_-}{k_{\gamma_-}}n(\gamma_+,\gamma_-)\gamma_-.
\end{equation}
Here the sum is over good Reeb orbits $\gamma_-$, and $k_{\gamma}$
denotes the unique positive integer such that $\gamma$ is the
$k_{\gamma}$-fold iterate of an embedded Reeb orbit.  Meanwhile,
$n(\gamma_+,\gamma_-)\in\Q$ is a signed count of holomorphic cylinders
in $\mc{M}(\gamma_+,\gamma_-)/\R$ that live in zero-dimensional moduli
spaces.  Multiply covered cylinders are counted with weight $\pm1$
divided by the covering multiplicity.  (We will not explain the signs
here.)  The homology of this chain complex, when defined (see below),
is called {\em cylindrical contact homology\/}, and we denote it by
$CH(Y,\xi,\Gamma)$.

The following is a special case of a result to be proved in
\cite{bee}, see \cite[\S3.2]{colin-honda} for the statement,
asserting that a more general theory called ``linearized contact
homology'' is well-defined.

\begin{theorem}
\label{thm:CCH}
Suppose there are no contractible Reeb orbits.  Then $\partial$ is
well-defined\footnote{The expert reader may worry that even for
  generic $J$, multiply covered holomorphic cylinders might have
  smaller index than the cylinders that they cover, leading to
  failure of the compactness needed to show that $\partial$
  is defined.  It turns out that this does not happen for holomorphic
  cylinders in the symplectization of a contact $3$-manifold.  However
  this is an issue in defining the continuation maps and chain
  homotopies needed to prove the invariance statement in
  Theorem~\ref{thm:CCH}, for which some abstract perturbations of the
  moduli spaces are needed.},
$\partial^2=0$, and the homology $CH(Y,\xi,\Gamma)$ depends only on
$Y$, the contact structure $\xi$, and the homology class $\Gamma$, and
not on the contact form $\lambda$ or almost complex structure $J$.
\end{theorem}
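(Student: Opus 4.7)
The plan is to mimic the construction of the Morse complex in \S\ref{sec:MC}, with good Reeb orbits playing the role of critical points, holomorphic cylinders in $\R\times Y$ playing the role of gradient flow lines, and the symplectic action functional \eqref{eqn:action} playing the role of $f$. The essential monotonicity input is Lemma \ref{lem:actionDecreases}, which ensures that cylinders counted by $\partial$ strictly decrease action. Each of the three assertions---well-definedness of $\partial$, $\partial^2=0$, and invariance---reduces to a compactness-plus-gluing package for an appropriate family of moduli spaces, and in every case the no-contractible-Reeb-orbits hypothesis is precisely what rules out bubbling.

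For well-definedness, Lemma \ref{lem:actionDecreases} bounds the action of every $\gamma_-$ appearing in $\partial\gamma_+$ by $\mathcal{A}(\gamma_+)$, and nondegeneracy allows only finitely many Reeb orbits in $\Gamma$ below any fixed action bound. For each such $\gamma_-$ the zero-dimensional part of $\mathcal{M}(\gamma_+,\gamma_-)/\R$ must be finite, which by SFT-style compactness amounts to excluding both chain-like breakings and bubbling. Holomorphic spheres in $\R\times Y$ do not exist because $\omega=d(e^s\lambda)$ is exact; bubbled planes are necessarily asymptotic to contractible Reeb orbits, which the hypothesis forbids; and chain breakings cost at least one unit of index after quotienting by $\R$, so they are absent in index $1$ for generic $J$. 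The proof of $\partial^2=0$ runs parallel to \eqref{eqn:FTDT}: the index-$2$ spaces compactify to oriented $1$-manifolds with boundary
\begin{equation*}
\partial\,\overline{\mathcal{M}(\gamma_+,\gamma_-)/\R} \;=\; \bigcup_{\gamma_0}\;\mathcal{M}(\gamma_+,\gamma_0)/\R\times\mathcal{M}(\gamma_0,\gamma_-)/\R,
\end{equation*}
weighted as in \eqref{eqn:CCHD}, and summing signed boundary points gives $\langle\partial^2\gamma_+,\gamma_-\rangle=0$. The restriction to good orbits and the rational weights coming from multiple covers are precisely what is needed for orientations and combinatorics to match across this boundary, because the moduli spaces of curves asymptotic to bad orbits do not admit coherent orientations compatible with gluing.

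For invariance, take two admissible pairs $(\lambda_0,J_0)$ and $(\lambda_1,J_1)$ with $\Ker\lambda_0=\Ker\lambda_1=\xi$, interpolate via a smooth family of contact forms, and choose an almost complex structure $\widetilde{J}$ on $\R\times Y$ that matches $J_0$ for $s\ll 0$, matches $J_1$ for $s\gg 0$, and is not $\R$-invariant in between. A signed count of index-$0$ $\widetilde{J}$-holomorphic cylinders with positive end at a $\lambda_1$-orbit and negative end at a $\lambda_0$-orbit defines a continuation map $\Phi\colon CC(Y,\lambda_1,\Gamma)\to CC(Y,\lambda_0,\Gamma)$. The chain map property, and independence up to chain homotopy from the interpolation, are proved by the same compactness-and-gluing arguments applied to index-$1$ and index-$(-1)$ cylinders in one- and two-parameter families; again bubbling is killed by the hypothesis. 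Reversing the cobordism gives $\Psi$ in the opposite direction, and a homotopy-of-homotopies argument identifies $\Psi\Phi$ and $\Phi\Psi$ with the identity up to chain homotopy, so $\Phi$ descends to an isomorphism on $CH$. Independence of $J$ is the special case $\lambda_0=\lambda_1$.

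The hardest step will be the transversality theory for multiply covered cylinders. As noted in the footnote to the statement, in the symplectization itself no multiply covered cylinder has Fredholm index smaller than the simple curve it covers, which is what makes $\partial$ defined for generic admissible $J$. In a non-$\R$-invariant cobordism this fails: multiple covers can live in strictly lower index than their underlying simple curves, and no generic choice of $\widetilde{J}$ can simultaneously cut them all out transversely, so the arguments above that $\Phi$ is a chain map and that $\Psi\Phi\simeq\mathrm{id}$ require abstract perturbations of the moduli spaces (polyfolds, Kuranishi structures, or the framework of \cite{bee}). This analytic framework is also what forces $\Q$ coefficients and the fractional weights appearing in \eqref{eqn:CCHD}.
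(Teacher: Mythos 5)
Your outline is correct and identifies exactly the points the paper itself emphasizes: action monotonicity via Lemma~\ref{lem:actionDecreases}, exclusion of bubbling of planes by the no-contractible-orbits hypothesis, the role of the $k_\gamma$ weights and the good/bad orbit distinction in making the gluing signs work, and the need for abstract perturbations to handle multiply covered cylinders of negative index in the non-$\R$-invariant cobordisms defining continuation maps. Be aware, though, that the paper does not actually prove Theorem~\ref{thm:CCH}; it quotes it as a special case of a result of \cite{bee} and offers only the comments above, so your sketch is more detailed than the text it would replace and the genuinely hard analytic content (SFT compactness, coherent orientations, and the abstract perturbation scheme) is still being deferred to the literature in both versions.
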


A few comments are in order.  First, the factors of $k_\gamma$ in
\eqref{eqn:CCHD} are needed to make $\partial^2=0$ work, because when
one glues two (not multiply covered) holomorphic cylinders along a
Reeb orbit $\gamma$ which is the $k$-fold iterate of an embedded Reeb
orbit, there are $k$ different ways to glue, compare \S\ref{sec:MC}.
This is also why bad Reeb orbits need to be discarded: it turns out
that these $k$ different gluings all have the same sign when $\gamma$
is good, but have cancelling signs when $\gamma$ is bad.  Finally, the
assumption that there are no contractible Reeb orbits ensures that the
necessary compactness arguments go through, by ruling out bubbling off
of holomorphic planes.

\subsection{The index}

Unlike Morse homology, cylindrical contact homology is not
$\Z$-graded.  Rather, it is relatively $\Z/d(2c_1(\xi))$-graded, where
$d(2c_1(\xi))$ denotes the divisibility of $2c_1(\xi)$ in $H^2(Y;\Z)$
mod torsion.  This means that any two generators $\gamma_+$ and
$\gamma_-$ have a well-defined relative grading, which can be regarded
as the grading difference between $\gamma_+$ and $\gamma_-$, and which
is an element of $\Z/d(2c_1(\xi))$.  In this sense the differential
$\partial$ decreases the grading by $1$.  The reason why there is no
absolute grading analogous to the Morse index is that the analogue of
the Hessian in this setting has infinitely many negative and
infinitely many positive eigenvalues.  Nonetheless it still makes
sense to define the relative grading of $\gamma_+$ and $\gamma_-$ to
be the expected dimension of the moduli space of holomorphic cylinders
$\mc{M}(\gamma_+,\gamma_-)$ that represent some relative homology
class $Z$.  This is given by a certain spectral flow (see
\S\ref{sec:SF}), which is computed by a topological formula which we
will not state here.  It is only defined modulo $d(2c_1(\xi))$,
because if $Z'$ is a different relative homology class of cylinder
then the corresponding expected dimensions differ by $\langle
2c_1(\xi),Z-Z'\rangle$, where $Z-Z'\in H_2(Y)$ denotes the difference
between the two relative homology classes.

There is also a canonical absolute $\Z/2$-grading: a Reeb orbit has
odd grading if it is positive hyperbolic, and even grading if it is
elliptic or negative hyperbolic.  The differential $\partial$ also has
degree $-1$ with respect to this $\Z/2$-grading.

\subsection{Examples}
\label{sec:CHE}

(1) Consider the contact form $\lambda_n$ on $T^3$ defined in
\eqref{eqn:lambdan}.  Recall that all Reeb orbits represent homology
classes of the form $(a,b,0)\in H_1(T^3)$ with $(a,b)\neq (0,0)$.  As
a consequence, the cylindrical contact homology
$CH_*(T^3,\xi_n,\Gamma)$ is nonzero only for $\Gamma$ of this form.
Fix such a class $\Gamma=(a,b,0)$.  All Reeb orbits $\gamma$ in the
homology class $\Gamma$ have symplectic action
\[
\mc{A}(\gamma) = 2\pi\sqrt{a^2+b^2}.
\]
So by Lemma~\ref{lem:actionDecreases}, there are no non-$\R$-invariant
holomorphic cylinders between them.  Now the cylindrical contact
homology is not yet defined because $\lambda_n$ is a ``Morse-Bott''
contact form whose Reeb orbits are not isolated but rather appear in
one-parameter families.  But one can show, see \cite{bmb}, that one
can perturb $\lambda_n$ to a contact form $\lambda_n'$ such that each
of the $n$ circles of Reeb orbits in the homology class $\Gamma$
splits into two Reeb orbits, one elliptic and one positive hyperbolic;
there are no other Reeb orbits in the class $\Gamma$, except possibly
for some much longer Reeb orbits which can be disregarded in the
computation using a direct limit argument; and the differential on
$CC_*(Y,\lambda_n',\Gamma)$ vanishes for any choice of admissible
almost complex structure $J$.  (Each $S^1$ of Reeb orbits is perturbed
using a Morse function $f:S^1\to\R$ with two critical points which
become the two Reeb orbits after perturbation.  There are two
holomorphic cylinders from the elliptic orbit to the hyperbolic orbit
after perturbation, counting with opposite signs, corresponding to the
Morse complex of $f$ on $S^1$.)  The conclusion is that the
$\Z/2$-graded cylindrical contact homology is given by
\[
CH_{\op{even}}(Y,\xi_n,(a,b,0)) \simeq CH_{\op{odd}}(Y,\xi_n,(a,b,0))
\simeq \Q^n.
\]

It now follows from Theorem~\ref{thm:CCH} that the different contact
structures $\xi_n$ are pairwise non-contactomorphic.  Also, they all
satisfy the Weinstein conjecture.  Because for any contact form
$\lambda$ with $\xi_n=\Ker(\lambda)$, either there is no contractible
Reeb orbit, in which case the cylindrical contact homology is
well-defined and nonzero, or else there is a contractible Reeb orbit.

In fact one can do a little better.  There is a generalization of
cylindrical contact homology, called {\em linearized contact
  homology\/}, which can sometimes be defined even when there are
contractible Reeb orbits, by adding certain correction terms to the
cylindrical contact homology differential.  Linearized contact
homology can be used in this example to prove that for every contact
form there is a Reeb orbit in the homology class $(a,b,0)$.

(2) Let us compute the cylindrical contact homology of the irrational
ellipsoid $(S^3,\lambda)$ from \S\ref{sec:SE}.  Here of course we must
take $\Gamma=0$.  Strictly speaking Theorem~\ref{thm:CCH} is not
applicable here because all Reeb orbits are contractible, but it turns
out that the cylindrical contact homology is still defined in this
example because all Reeb orbits satisfy a certain Conley-Zehnder index
condition which rules out troublesome holomorphic planes.

Denote the two embedded Reeb orbits by $\gamma_1$ and $\gamma_2$.
These are elliptic. Let $\gamma_i^k$ denote the $k$-fold iterate of
$\gamma_i$, see \S\ref{sec:terminology}.  The chain complex
$CC_*(S^3,\lambda,0)$ has a relative $\Z$-grading, and because
$\Gamma=0$ there is in fact a canonical way to normalize it to an
absolute $\Z$-grading.  This grading is given as follows: there are
positive irrational numbers $\phi_1$ and $\phi_2$ with
$\phi_1\phi_2=1$ (in the notation of equation \eqref{eqn:ellipsoid},
$\phi_1$ and $\phi_2$ are $a_1/a_2$ and $a_2/a_1$) such that the
grading of $\gamma_i^k$ is
\begin{equation}
\label{eqn:combinatorics1}
|\gamma_i^k| = 2\lfloor k(1+\phi_i)\rfloor.
\end{equation}
It is an exercise to deduce from \eqref{eqn:combinatorics1} that there
is one generator of each positive even grading.  Hence the
differential vanishes for degree reasons, and
\[
CH_*(S^3,\xi,0) \simeq \left\{\begin{array}{cl} \Q, & *=2,4,\ldots,\\
0, & \mbox{otherwise}.
\end{array}
\right.
\]
Here $\xi$ denotes the contact structure determined by $\lambda$,
which as mentioned in \S\ref{sec:SE} is the unique tight contact
structure on $S^3$.  See \cite{bce} for some applications of contact
homology to the Reeb dynamics of other contact forms determining this
contact structure.

(3) As mentioned previously, Colin-Honda used linearized contact
homology to prove the Weinstein conjecture in many cases.  However it
is not currently known whether linearized contact homology can be used
to prove the Weinstein conjecture for all tight contact
three-manifolds.  (It turns out that in the overtwisted case
linearized contact homology is never defined, but the failure of
linearized contact homology to be defined implies the existence of a
contractible Reeb orbit, reproducing Hofer's
Theorem~\ref{thm:hoferwc}.)  Taubes's proof of the Weinstein
conjecture for all contact three-manifolds needs Seiberg-Witten
theory.

\section{The big picture surrounding Taubes's proof of the Weinstein
  conjecture}
\label{sec:BigPicture}

\subsection{Seiberg-Witten invariants of four-manifolds}

The Seiberg-Witten invariants (and the conjecturally equivalent
Ozsvath-Szabo invariants \cite{os4d}) are the most powerful tool
currently available for distinguishing smooth four-manifolds.  To
briefly outline what these are, let $X$ be a closed oriented connected
smooth four-manifold.  If $b_2^+(X)>1$, then the Seiberg-Witten
invariant of $X$ is, after certain orientation choices have been made,
a function
\[
SW: \Spinc(X)\longrightarrow \Z.
\]
Here $b_2^+(X)$ denotes the maximal dimension of a subspace of
$H_2(X;\R)$ on which the intersection pairing is positive definite.
Also $\Spinc(X)$ denotes the set of spin-c structures on $X$.  This is
an affine space over $H^2(X;\Z)$, which we will say more about in
\S\ref{sec:spinc}.  Given a spin-c structure $\frak{s}$, the
Seiberg-Witten invariant $SW(X,\frak{s})$ is defined by appropriately
counting solutions to the Seiberg-Witten equations on $X$.  (We will
not write down the Seiberg-Witten equations here, but we will see a
three-dimensional version of them in \S\ref{sec:SWE}, and a
two-dimensional version in \S\ref{sec:vortex}.)  The Seiberg-Witten
invariants depend only on the diffeomorphism type of $X$, and can
distinguish many pairs of smooth four-manifolds that are homeomorphic
but not diffeomorphic.  A detailed definition of the Seiberg-Witten
invariants of four-manifolds may be found in \cite{morgan}, and a
review of the early results in this area is given in
\cite{donaldson-survey}.

\subsection{Taubes's ``SW=Gr'' theorem}

Suppose now that our four-manifold $X$ is symplectic.  A major result
of Taubes from the 1990's asserts that the Seiberg-Witten invariants
of $X$ are equivalent to a certain count of holomorphic curves.

To describe this result, let $\omega$ denote the symplectic form on
$X$.  It turns out that $\omega$ defines a bijection
\[
\imath_\omega: \Spinc(X) \stackrel{\simeq}{\longrightarrow} H_2(X).
\]
Now let $J$ be a generic, $\omega$-tame almost complex structure on
$X$; the tameness condition means that $\omega(v,Jv)>0$ for all
nonzero tangent vectors $v$.  Taubes then defines a ``Gromov
invariant''
\[
Gr: H_2(X)\longrightarrow\Z
\]
roughly as follows.  For each $Z\in H_2(X)$, the integer $Gr(X,Z)$ is
a count of certain $J$-holomorphic curves $C$ in $X$ representing the
homology class $Z$.  The curves $C$ that are counted are required to
be embedded, except that they may include multiple covers of tori of
square zero.  Such a curve $C$ is not required to be connected, but
each component of $C$ must live in a zero-dimensional moduli space.
Each such holomorphic curve $C$ is counted with a certain integer
weight.  (The weight is $\pm 1$, except when $C$ includes multiply
covered tori, in which case the weight is given by a somewhat
complicated recipe.)  Taubes's theorem is now:

\begin{theorem}[Taubes]
\label{thm:swgr}
Let $X$ be a closed connected symplectic 4-manifold with $b_2^+(X)>1$.
Then for each $\frak{s}\in\Spinc(X)$ we have
\[
SW(X,\frak{s}) = Gr(X,\imath_\omega(\frak{s})).
\]
\end{theorem}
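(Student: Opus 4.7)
The plan is to follow Taubes's original strategy, which uses a large perturbation of the Seiberg--Witten equations by the symplectic form to force solutions to concentrate along pseudo-holomorphic curves, and conversely to construct SW solutions by gluing local ``vortex'' solutions along a given holomorphic curve.

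First I would use the symplectic structure to fix notation. Choose an $\omega$-compatible almost complex structure $J$; this gives a canonical $\Spinc$ structure $\frak{s}_0$ with positive spinor bundle $S^+_{\frak{s}_0} = \Lambda^{0,0} \oplus \Lambda^{0,2}$. Any $\frak{s}\in\Spinc(X)$ can then be written uniquely as $\frak{s}_0 \otimes E$ for a complex line bundle $E\to X$, so that $S^+_{\frak{s}} = E \oplus (K^{-1}\otimes E)$ where $K=\Lambda^{2,0}$. The bijection $\imath_\omega$ is essentially $\frak{s}\mapsto\mathrm{PD}(c_1(E))$. I would then perturb the SW equations by adding $-ir\omega$ to the curvature equation, where $r>0$ is a large parameter; writing a positive spinor as $\Phi=(\alpha,\beta)$ with $\alpha\in\Gamma(E)$ and $\beta\in\Gamma(K^{-1}\otimes E)$, the perturbed equations decouple into components that can be analyzed asymptotically as $r\to\infty$.

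The direction $\SW\Rightarrow Gr$ proceeds by taking a sequence of solutions $(A_n,\Phi_n)$ with $r_n\to\infty$. The key a priori estimates, obtained from pointwise bounds via the Weitzenb\"ock formula combined with the maximum principle, force $|\alpha_n|^2\to 1$ uniformly on compact sets away from $\alpha_n^{-1}(0)$ and $|\beta_n|\to 0$ exponentially fast in $\sqrt{r_n}\cdot\mathrm{dist}$. The zero sets $\alpha_n^{-1}(0)$, viewed as currents with appropriate multiplicities, then converge to a pseudo-holomorphic cycle $C$ representing the class $\imath_\omega(\frak{s})\in H_2(X)$. A rescaling argument near $C$ in the normal direction exhibits the local model as a solution to the two-dimensional vortex equations on $\C$, which have a well-understood moduli space. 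Conversely, for $\SW \Leftarrow Gr$, given a holomorphic curve $C$ one patches local vortex solutions on the normal bundle of $C$ (centered at a configuration of $k$ points for a multiplicity-$k$ component, or a spanning section in the embedded case) to form approximate SW solutions, then uses a quantitative implicit function theorem / Newton iteration to produce exact solutions for all $r$ sufficiently large.

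Finally, one must match counts with signs. On the SW side, the orientation comes from the determinant line of the linearized SW operator; on the Gromov side, from the determinant of the $\dbar$-operator together with the combinatorial weight assigned to multiply covered tori. Matching these reduces to an index computation and a careful bookkeeping of the local-to-global correspondence between SW moduli and vortex configurations on the normal bundle of each component of $C$. The main obstacle is the analytic heart of the argument: proving the concentration theorem as $r\to\infty$ with sufficient control to identify the limiting current as a $J$-holomorphic cycle of the expected class, and inverting this via gluing. An additional delicacy is the treatment of multiply covered square-zero tori, where both the Gromov weight (a product over tori of a certain generating function in the first Chern class of the normal bundle) and the corresponding contributions from multiple vortex solutions on the torus must be shown to agree; this is the most subtle aspect of the sign and weight matching.
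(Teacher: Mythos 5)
Your outline follows exactly the strategy the paper attributes to Taubes: the survey does not prove Theorem~\ref{thm:swgr} but only cites \cite{taubes:sw=gr} and summarizes the idea as deforming the Seiberg--Witten equations by a large multiple of the symplectic form so that solutions concentrate along holomorphic curves, with the vortex equations as the local model. Your sketch of both directions, the a priori estimates, and the subtlety of multiply covered square-zero tori is consistent with that cited proof, so this is the same approach, correctly identified at the level of detail a sketch can provide.
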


The proof of this theorem is given in \cite{taubes:sw=gr}; for an
introduction to it see \cite{ht97}, and for a discussion from a
physics perspective see \cite{witten}.  The basic idea is that one
deforms the Seiberg-Witten equations using a large multiple of the
symplectic form, and shows that solutions to the deformed
Seiberg-Witten equations ``concentrate along'' holomorphic curves.
Taubes's proof of the Weinstein conjecture involves a
three-dimensional version of this.

\subsection{Three-dimensional Seiberg-Witten theory}

Let $Y$ be a closed oriented 3-manifold.  The set of spin-c structures
on $Y$, denoted by $\Spinc(Y)$, is again an affine space over
$H^2(Y;\Z)$.  A spin-c structure on $Y$ determines a product spin-c
structure on $\R\times Y$. There are then various ways to define
topological invariants of the 3-manifold $Y$ by studying solutions to
the Seiberg-Witten equations on the noncompact 4-manifold $\R\times
Y$.

To start, one can consider $\R$-invariant solutions to the
Seiberg-Witten equations on $\R\times Y$.  These are equivalent to
solutions to the three-dimensional Seiberg-Witten equations on $Y$,
which we will discuss in \S\ref{sec:SWE}.  When $b_1(Y)>0$, one can
count these solutions with signs to obtain the three-dimensional {\em
  Seiberg-Witten invariant\/}\footnote{When $b_1(Y)=1$, for
  ``torsion'' spin-c structures, ie spin-c structures whose first
  Chern class is torsion, see \S\ref{sec:spinc}, this invariant
  depends on the choice of one of two possible ``chambers''.}
$SW:\Spinc(Y)\to\Z$.  As mostly\footnote{ie after identifying spin-c
  structures that differ by torsion elements in $H^2(Y;\Z)$} shown in
\cite{donaldson,meng-taubes} and fully shown in \cite{turaev}, this
invariant agrees with the Turaev torsion of $Y$, which generalizes the
Alexander polynomial of a knot, and which is explicitly computable in
terms of the determinants of certain matrices of polynomials
associated to a triangulation of $Y$.

To get more interesting invariants of $Y$, one observes that solutions
to the Seiberg-Witten equations on $Y$ are critical points of a
certain functional $\mathcal{F}$ on a configuration space associated
to $Y$.  Moreover, solutions to the four-dimensional Seiberg-Witten
equations on $\R\times Y$ (not necessarily $\R$-invariant) are
equivalent to gradient flow lines of this functional.  It turns out
that one can then define a version of Morse homology for the
functional $\mathcal{F}$, called {\em Seiberg-Witten Floer
  homology\/}, which we will say more about in \S\ref{sec:SWF}.  The
analytical details of this construction are highly nontrivial and have
been carried out by Kronheimer-Mrowka \cite{km}.  Roughly speaking,
the Seiberg-Witten Floer homology is the homology of a chain complex
which is generated by solutions to the Seiberg-Witten equations on
$Y$, and whose differential counts solutions to the Seiberg-Witten
equations on $\R\times Y$.  When $b_1(Y)>0$, for non-torsion spin-c
structures, the Euler characteristic of the Seiberg-Witten Floer
homology agrees with the Seiberg-Witten invariant discussed above.
However Seiberg-Witten Floer homology can also be defined for torsion
spin-c structures and without any assumption on $b_1(Y)$.

In fact there are two basic versions of Seiberg-Witten Floer theory
that one can define, depending on how one treats ``reducibles'', see
\S\ref{sec:gauge}. The first is denoted by $\check{HM}_*(Y)$, and
pronounced ``HM-to''; this assigns a relatively graded homology group
$\check{HM}_*(Y,\frak{s})$ to each spin-c structure $\frak{s}$ on $Y$,
which is conjecturally isomorphic to the Heegaard Floer homology
$HF_*^+(Y,\frak{s})$ defined in \cite{os}.  The second is pronounced
``HM-from'', denoted by $\widehat{HM}_*(Y,\frak{s})$, and
conjecturally isomorphic to the Heegaard Floer homology
$HF^-_*(Y,\frak{s})$.  For non-torsion spin-c structures, there are no
reducibles and $\check{HM}_*$ and $\widehat{HM}_*$ are the same.  For
any spin-c structure $\frak{s}$, there is a canonical isomorphism
\[
\check{HM}_*(-Y,\frak{s}) = \widehat{HM}^{-*}(Y,\frak{s}),
\]
where $-Y$ denotes $Y$ with its orientation reversed, and
$\widehat{HM}^*$ denotes the cohomological version of $\widehat{HM}_*$
obtained by dualizing the chain complex.

\subsection{Embedded contact homology}
\label{sec:ech}

Suppose now that our three-manifold $Y$ is equipped with a contact
form $\lambda$.  Recall from \S\ref{sec:CF} that the four-manifold
$\R\times Y$ then has a symplectic form $d(e^s\lambda)$, where $s$
denotes the $\R$ coordinate.  It is natural to seek an analogue of
Taubes's ``SW=Gr'' theorem for the noncompact symplectic four-manifold
$\R\times Y$.  That is, one would like to understand the
Seiberg-Witten Floer homology of $Y$ in terms of holomorphic curves in
$\R\times Y$.  For this purpose it is appropriate to use an admissible
almost complex structure as in Definition~\ref{def:AACS}.

The analogy suggests that the Seiberg-Witten Floer homology of $Y$
should be isomorphic to the homology of a chain complex whose
differential counts certain (mostly) embedded holomorphic curves in
$\R\times Y$, and which is generated by certain $\R$-invariant
holomorphic curves in $\R\times Y$, that is to say unions of Reeb
orbits.  The resulting theory is called {\em embedded contact
  homology\/}, or ECH for short.  There is some resemblance between
ECH and cylindrical contact homology; but among other differences, ECH
does not require the holomorphic curves that are counted to be
cylinders, while cylindrical contact homology does not require them to
be embedded.

To say a bit more about what ECH is, assume as usual that all Reeb
orbits are nondegenerate.

\begin{definition}
An {\em orbit set\/} is a finite set of pairs
$\alpha=\{(\alpha_i,m_i)\}$ where:
\begin{itemize}
\item
The $\alpha_i$'s are distinct embedded Reeb orbits.
\item
The $m_i$'s are positive integers.
\end{itemize}
The homology class of the orbit set $\alpha$ is defined by
\[
[\alpha] \eqdef \sum_im_i[\alpha_i] \in H_1(Y).
\]
The orbit set $\alpha$ is {\em admissible\/} if
$m_i=1$ whenever $\alpha_i$ is hyperbolic (see \S\ref{sec:NRO}).
\end{definition}

Given $\Gamma\in H_1(Y)$, the embedded contact homology
$ECH_*(Y,\lambda,\Gamma)$ is the homology of a chain complex which is
freely generated over $\Z$ by admissible orbit sets $\alpha$ with
$[\alpha]=\Gamma$.  The differential counts certain (mostly) embedded
holomorphic curves in $\R\times Y$.  It has a relative
$\Z/d(c_1(\xi)+2\op{PD}(\Gamma))$ grading, where `$d$' denotes
divisibility in $H^2(Y;\Z)$ mod torsion.  For the full definition of
this theory, see \cite{t3} for an overview and \cite{ir,obg1} for
more details.

Note that the empty set of Reeb orbits is a legitimate
generator\footnote{It is always a cycle in the ECH chain complex, by
  the argument in Lemma~\ref{lem:actionDecreases}.  Its homology class
  in $ECH_*(Y,\lambda,0)$ agrees with the invariant of contact
  structures in Seiberg-Witten Floer homology under the isomorphism
  \eqref{eqn:echswf}, and conjecturally also with the contact
  invariant in Heegaard Floer homology.}  of the ECH chain complex
with $\Gamma=0$.

\begin{example}
\label{ex:IE}
Consider again the irrational ellipsoid $(S^3,\lambda)$ as discussed
in \S\ref{sec:CHE}.  Of course we must take $\Gamma=0$.  The
generators of the ECH chain complex have the form
$\gamma_1^{m_1}\gamma_2^{m_2}$ where $\gamma_1$ and $\gamma_2$ are the
two embedded Reeb orbits, $m_1$ and $m_2$ are nonnegative integers,
and $\gamma_1^{m_1}\gamma_2^{m_2}$ is shorthand for the orbit set
consisting of the pair $(\gamma_1,m_1)$ (when $m_1\neq 0$) together
with the pair $(\gamma_2,m_2)$ (when $m_2\neq 0$).  In this case the
chain complex has a relative $\Z$-grading, and since $\Gamma=0$ this
has a canonical refinement to an absolute $\Z$-grading such that the
grading of the empty set is zero.  It is shown in \cite{wh} that the
grading is given by
\[
\left|\gamma_1^{m_1}\gamma_2^{m_2}\right| = 2
\left(m_1+m_2+m_1m_2+\sum_{i=1}^2\sum_{k=1}^{m_i}\lfloor k\phi_i\rfloor\right)
\]
where $\phi_1$ and $\phi_2$ are as in \S\ref{sec:CHE}.  One can deduce
from this formula, see \cite{wh}, that there is one generator of each
nonnegative even grading, so that
\[
ECH_*(S^3,\lambda,0) \simeq \left\{\begin{array}{cl} \Z, &
    *=0,2,\ldots,\\
0, & \mbox{otherwise}
\end{array}\right.
\]
\end{example}

In general it turns out that ECH depends only on $Y$, the contact
structure $\xi$, and the homology class $\Gamma$, and not on the
choice of contact form $\lambda$ or admissible almost complex
structure $J$.  This follows from a much stronger result recently
proved by Taubes \cite{echswf1,echswf234}, which is the analogue of
Gr=SW in this setting:

\begin{theorem}[Taubes]
\label{thm:echswf}
  Let $Y$ be a closed oriented 3-manifold with a contact form
  $\lambda$ such that all Reeb orbits are nondegenerate.  Then for
  each $\Gamma\in H_1(Y)$ there is an isomorphism
\begin{equation}
\label{eqn:echswf}
ECH_*(Y,\lambda,\Gamma) \simeq
\widehat{HM}^{-*}(Y,\frak{s}_\xi + \op{PD}(\Gamma)),
\end{equation}
up to a grading shift\footnote{In fact both sides of
  \eqref{eqn:echswf} have canonical absolute gradings by homotopy
  classes of oriented 2-plane fields, see \cite{km,ir}, and it is
  natural to conjecture that the isomorphism \eqref{eqn:echswf}
  respects these gradings.  It is further shown in \cite{echswf5} that
  the isomorphism \eqref{eqn:echswf} respects some additional
  structures on embedded contact homology and Seiberg-Witten Floer
  cohomology, namely the aforementioned contact invariants, the ``$U$
  maps'', the actions of $H_1(Y)$ mod torsion, and the twisted
  versions.}.
\end{theorem}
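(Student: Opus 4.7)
The plan is to follow the strategy of Taubes's \textbf{SW=Gr} theorem (Theorem~\ref{thm:swgr}) adapted to the three-dimensional, Floer-theoretic setting, working in the symplectization $\R\times Y$ with an admissible almost complex structure. Specifically, one deforms the three-dimensional Seiberg-Witten equations on $Y$ by introducing a large real parameter $r$ multiplying a term built from $\lambda$ in the curvature equation (and coupling $r\lambda$ into the Dirac operator). The goal is to show that for every sufficiently large generic $r$ there is an isomorphism of chain complexes between the $r$-perturbed Seiberg-Witten Floer chain complex computing $\widehat{HM}^{-*}(Y,\frak{s}_\xi+\op{PD}(\Gamma))$ and the ECH chain complex of $(Y,\lambda,\Gamma)$, from which the theorem follows on passing to homology.

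The first main step is to identify generators. For the spin-c structure $\frak{s}_\xi+\op{PD}(\Gamma)$, the positive spinor bundle decomposes, using $\lambda$, as $E\oplus E\tensor\xi^{-1}$ with $c_1(E)=\op{PD}(\Gamma)$, and I would write any Seiberg-Witten solution as $(A,\Psi=(\alpha,\beta))$. As $r\to\infty$, a-priori $C^0$ estimates on the perturbed equations should force $|\alpha|\to 1$ off a thin neighborhood of $\alpha^{-1}(0)$ and force $\beta\to 0$ globally. Using the local two-dimensional vortex model on a disk transverse to $\alpha^{-1}(0)$ (see \S\ref{sec:vortex}), the plan is to prove that $\alpha^{-1}(0)$ Hausdorff-converges to a disjoint union of embedded Reeb orbits $\alpha_i$ carrying integer multiplicities $m_i$ given by the local vortex number, producing an orbit set representing $\Gamma$. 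The admissibility condition, that $m_i=1$ whenever $\alpha_i$ is hyperbolic, should come from the linearized stability analysis: a hyperbolic return map coupled to a multiple-cover vortex creates a kernel of the linearized operator, obstructing the existence of a genuine Seiberg-Witten solution for large $r$.

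The second main step is to match differentials by studying the four-dimensional $r$-perturbed equations on $\R\times Y$. On the Seiberg-Witten side the differential counts instantons between the generators just constructed, while on the ECH side it counts (mostly embedded) $J$-holomorphic currents of ECH index one. The plan is a two-way correspondence: first, a compactness argument shows that any sequence of Seiberg-Witten instantons with $r\to\infty$ concentrates along a broken holomorphic current of total ECH index one; second, a gluing construction, performed fiberwise over the vortex moduli spaces along the curve, produces for each rigid embedded holomorphic curve a unique instanton for large $r$. Relative gradings must be compared via the ``index equals spectral flow'' principle recalled in \S\ref{sec:SF}: here I would invoke Taubes's new estimate for the spectral flow of a family of Dirac operators in terms of the Chern-Simons functional (advertised in the introduction), which identifies the four-dimensional analytic index with the topologically defined ECH index and produces the claimed grading shift.

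The hardest step, and the essential three-dimensional innovation beyond SW=Gr, will be obtaining uniform-in-$r$ analytic control on non-compact instanton moduli spaces on $\R\times Y$ so that the above correspondences are chain-level bijections rather than merely Euler-characteristic identities. In particular, one must simultaneously rule out Seiberg-Witten solutions whose zero sets fail to localize on any Reeb orbit, instantons whose energy escapes to infinity along the $\R$-direction in uncontrolled ways, and unwanted ``collisions'' between different multiply-covered ECH generators under the concentration map. It is the spectral-flow estimate of the previous paragraph, together with a Morse-Bott analysis of the vortex equations fibered over Reeb orbits, that I would expect to close these loopholes and upgrade the generator-and-current correspondence into the chain-level isomorphism asserted in \eqref{eqn:echswf}.
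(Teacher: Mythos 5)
The paper does not prove this theorem: it is quoted from Taubes's separate series of papers \cite{echswf1,echswf234}, and the survey only works through the much weaker consequence needed for the Weinstein conjecture, namely Theorem~\ref{thm:convergence} (a subsequence of Seiberg--Witten solutions with bounded energy produces \emph{some} nonempty orbit set). So there is no detailed argument here to compare yours against. That said, your outline does match the known architecture of Taubes's proof and the ingredients the survey describes (large-$r$ perturbation by $\lambda$, the a priori estimates of \S\ref{sec:APE}, concentration of $F_A$ along $\alpha^{-1}(0)$, the vortex model on transverse disks, compactness and gluing for instantons on $\R\times Y$, and a spectral-flow computation of relative gradings).

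As a proof, however, what you have written is a program rather than an argument: every step is deferred, and the deferred steps are exactly where the content lies. Two structural points deserve flagging. First, your claim that ``for every sufficiently large generic $r$ there is an isomorphism of chain complexes'' cannot be literally right for the torsion spin-c structures, where reducible solutions exist and $\widehat{HM}^{-*}$ is nonzero in infinitely many gradings (Theorem~\ref{thm:nontrivial}) while for any fixed $r$ only the generators in gradings above an $r$-dependent bound are irreducible and correspond to orbit sets; the identification must be organized as a limit over $r$ (this is also why the target is the cohomology $\widehat{HM}^{-*}$ with reversed grading rather than $\check{HM}_*$, a choice your sketch does not explain). Second, your mechanism for admissibility is only a heuristic: the actual bookkeeping assigns to each pair $(\alpha_i,m_i)$ a count of Seiberg--Witten solutions governed by the linearized return map acting on the $m_i$-vortex moduli space, and showing that this count is $1$ for elliptic orbits and for simple hyperbolic orbits, while multiply covered hyperbolic orbits contribute no nondegenerate generator, is a genuine computation, not an immediate consequence of ``a kernel obstructing existence.'' Neither point is fatal to the strategy, but both would have to be supplied before this could be called a proof.
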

Here $\xi$ denotes the contact structure determined by $\lambda$, and
$\frak{s}_\xi$ is a spin-c structure determined by $\xi$, see
Example~\ref{ex:SPF} below.  

\subsection{Significance for the Weinstein conjecture}

To prove the Weinstein conjecture, it is enough to show that the
embedded contact homology is nontrivial.  More precisely, if
$(Y,\lambda)$ were a counterexample to the Weinstein conjecture, then
one would have
\begin{equation}
\label{eqn:trivialECH}
ECH(Y,\lambda,\Gamma) \simeq \left\{\begin{array}{cl} \Z, &
    \Gamma=0,\\
0, & \Gamma\neq 0.
\end{array}\right.
\end{equation}
Here the $\Z$ corresponds to the empty set of Reeb orbits.  However,
by Theorem~\ref{thm:echswf}, the ECH cannot be trivial as in
\eqref{eqn:trivialECH}, because the Seiberg-Witten Floer homology is
always infinitely generated:

\begin{theorem}
{\em (Kronheimer-Mrowka \cite[\S35.1]{km})\/}
\label{thm:nontrivial}
Let $Y$ be a closed oriented 3-manifold and let $\frak{s}$ be a spin-c
structure with $c_1(\frak{s})$ torsion.  Then
$\widehat{HM}^*(Y,\frak{s})$ is nonzero for infinitely many values of
the grading $*$, which are bounded from above.
\end{theorem}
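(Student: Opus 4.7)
The plan is to produce, for any spin-c structure $\mathfrak{s}$ with $c_1(\mathfrak{s})$ torsion, an infinite tower of classes in $\widehat{HM}^*(Y,\mathfrak{s})$ coming from the reducible (zero-spinor) solutions of the Seiberg-Witten equations on $Y$, which exist precisely under that hypothesis.

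First I would set up the three-dimensional configuration space and observe that a Seiberg-Witten solution with vanishing spinor amounts to a flat spin-c connection. Such connections exist if and only if $c_1(\mathfrak{s})$ is torsion, and in that case they form, modulo gauge, a torus of dimension $b_1(Y)$. These reducibles are the fixed locus of the residual $S^1$-symmetry on the configuration space, which is what forces the Kronheimer-Mrowka blow-up.

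Next I would describe, following Kronheimer-Mrowka, how each reducible is replaced on the blown-up configuration space by the projectivization of its unit spinor, producing a tower of nondegenerate blown-up critical points indexed by the two-sided spectrum of a perturbed Dirac operator on $Y$. The flavor $\overline{HM}_*(Y,\mathfrak{s})$ is computed entirely from these reducible critical points, and for torsion $c_1(\mathfrak{s})$ it becomes a free module under the degree-$(-2)$ operator $U$, hence is nonzero in both arbitrarily high and arbitrarily low gradings.

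Third, I would invoke the long exact triangle
\[
\cdots \to \check{HM}_*(Y,\mathfrak{s}) \to \overline{HM}_*(Y,\mathfrak{s}) \to \widehat{HM}_{*-1}(Y,\mathfrak{s}) \to \check{HM}_{*-1}(Y,\mathfrak{s}) \to \cdots
\]
of Kronheimer-Mrowka. Since $\check{HM}_*(Y,\mathfrak{s})$ vanishes in all sufficiently negative gradings, in that tail the connecting homomorphism $\overline{HM}_* \to \widehat{HM}_{*-1}$ is an isomorphism, so $\widehat{HM}_*(Y,\mathfrak{s})$ inherits the infinite tower extending downward while vanishing in all sufficiently positive gradings. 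Passing to the cohomological version $\widehat{HM}^*(Y,\mathfrak{s})$ by dualizing the Floer chain complex preserves this qualitative picture and yields exactly the claim that it is nonzero in infinitely many gradings, all lying below a fixed upper bound.

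The main obstacle is not this exact-triangle deduction, which is essentially formal, but rather the analytic foundations that make $\check{HM}$, $\widehat{HM}$, and $\overline{HM}$ well-defined in the presence of reducibles: the blow-up with corners on the configuration space, gauge-equivariant transversality for generic perturbations, the Seiberg-Witten compactness and gluing theorems, and the orientation theory. All of this is the substance of Kronheimer-Mrowka's book \cite{km}; given that infrastructure, the computation of $\overline{HM}$ for torsion spin-c structures and the deduction above follow comparatively cleanly.
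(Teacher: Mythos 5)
Your proposal is correct and follows essentially the route the paper itself indicates: the paper does not prove this theorem but cites Kronheimer--Mrowka, and its discussion in \S\ref{sec:SWF} describes exactly your strategy --- compute $\overline{HM}_*$ from the reducibles and feed the result into the long exact sequence relating the three flavors. The one caution is that $\overline{HM}_*$ is not in general a \emph{free} module over $\Z[U,U^{-1}]$ (the coupled Morse complex on the torus of reducibles carries a differential governed by the triple cup product on $H^*(Y)$), but the Kronheimer--Mrowka \S 35.1 computation shows it is nonetheless nonzero, and invertibility of $U$ then yields nonvanishing in infinitely many degrees in both directions, which is all your exact-triangle argument requires.
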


Here $c_1(\frak{s})$ denotes the first Chern class of the spin-c
structure, which is defined in \S\ref{sec:spinc} below.  In terms of
the correspondence between $\Spinc(Y)$ and $H_1(Y)$ in
\eqref{eqn:echswf}, one has
\[
c_1(\frak{s}_\xi+\op{PD}(\Gamma)) = c_1(\xi) + 2\op{PD}(\Gamma).
\]
Since $TY$ is trivial, one can always find a spin-c structure
$\frak{s}$ such that $c_1(\frak{s})=0$, and in particular $c_1(\frak{s})$
is torsion.  Thus Theorems~\ref{thm:echswf} and \ref{thm:nontrivial}
imply the following version of the Weinstein conjecture:

\begin{itemize}
\item {\em Let $Y$ be a closed oriented $3$-manifold with a contact form
  $\lambda$ {\em such that all Reeb orbits are nondegenerate\/}.  Let
  $\Gamma\in H_1(Y)$ such that $c_1(\xi)+2\op{PD}(\Gamma)\in
  H^2(Y;\Z)$ is torsion.  (Such $\Gamma$ always exist.)  Then there is
  a nonempty {\em admissible\/} orbit set $\alpha$ with $[\alpha]=\Gamma$.\/}
\end{itemize}

Note that the fact that ECH is infinitely generated does not imply
that there are infinitely many embedded Reeb orbits, as shown by the
irrational ellipsoid in Example~\ref{ex:IE}.

In fact one does not need the full force of the isomorphism in
Theorem~\ref{thm:echswf} to prove the Weinstein conjecture; rather one
just needs a way of passing from generators of Seiberg-Witten Floer
homology to ECH generators.  This is what Taubes's original proof of
the Weinstein conjecture in \cite{tw} establishes, yielding a proof of
the following theorem, which is slightly different than the statement
above:

\begin{theorem}[Taubes]
\label{thm:tw}
Let $Y$ be a closed oriented $3$-manifold with a contact form
$\lambda$.  Let $\Gamma\in H_1(Y)$ such that
$c_1(\xi)+2\op{PD}(\Gamma)\in H^2(Y;\Z)$ is torsion.  (Such $\Gamma$
always exist.)  Then there is a nonempty orbit set $\alpha$ with
$[\alpha]=\Gamma$.
\end{theorem}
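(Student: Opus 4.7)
The plan is to combine the Kronheimer--Mrowka nontriviality theorem with a three-dimensional incarnation of Taubes's \emph{SW=Gr} deformation argument, producing Reeb orbits as limits of solutions to the Seiberg--Witten equations on $Y$ as a perturbation parameter $r$ tends to infinity.

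First I would reduce to the case that all Reeb orbits of $\lambda$ are nondegenerate. Since nondegeneracy is $C^\infty$-generic with $\op{Ker}(\lambda)=\xi$ fixed, pick a sequence $\lambda_n\to\lambda$ of nondegenerate contact forms for $\xi$. If the theorem is known in the nondegenerate case, each $\lambda_n$ admits a nonempty orbit set $\alpha^{(n)}$ with $[\alpha^{(n)}]=\Gamma$; a uniform bound on the symplectic action $\mc{A}(\alpha^{(n)})$ (which will come out of the analytic step below) combined with Arzel\`a--Ascoli applied to the constituent embedded Reeb orbits then produces a nonempty orbit set for $\lambda$ in the class $\Gamma$. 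Meanwhile such $\Gamma$ exists because $TY$ is trivial, hence $c_1(\xi)$ is even modulo torsion. Setting $\frak{s}=\frak{s}_\xi+\op{PD}(\Gamma)$, Theorem~\ref{thm:nontrivial} supplies nonzero classes in $\widehat{HM}^{*}(Y,\frak{s})$ in infinitely many gradings, bounded above but arbitrarily negative.

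Next, following Taubes, I would deform the Seiberg--Witten equations on $Y$ by a term proportional to $r\cdot d\lambda$, for $r>0$ large. The $r$-perturbed equations compute the same $\widehat{HM}^{*}(Y,\frak{s})$, so for each $r$ the classes supplied by the previous step are represented by genuine solutions $(A_r,\Psi_r)$. Decomposing the spinor using the splitting determined by $\frak{s}_\xi$ into components $(\alpha_r,\beta_r)$, a maximum-principle argument applied to the curvature equation forces $|\alpha_r|\leq 1+O(r^{-1})$ and $|\alpha_r|\to 1$ away from the zero set $Z_r\eqdef\alpha_r^{-1}(0)$, while the Dirac equation forces the $1$-dimensional part of $Z_r$ to be asymptotically tangent to the Reeb vector field as $r\to\infty$.

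The main obstacle is then the $r\to\infty$ limit: one must show that the integer-weighted $1$-currents associated to $Z_r$ converge along a subsequence to a \emph{nonempty} orbit set $\alpha$ with $[\alpha]=\Gamma$. Uniform action bounds on $Z_r$ come from pairing the curvature equation against $\lambda$, in the spirit of Lemma~\ref{lem:actionDecreases}. The delicate point is ruling out the possibility that the $(A_r,\Psi_r)$ drift to a reducible configuration, in which case $Z_r$ would disappear and one would be left with the empty orbit set; this is precisely where one needs that the chosen Floer class has arbitrarily negative grading, together with Taubes's new spectral-flow estimate bounding the Floer grading of a solution by the Chern--Simons functional of its connection (cf.\ \S\ref{sec:OTP}). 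Near a reducible the Chern--Simons functional is uniformly bounded and hence so is the grading, so a sequence of solutions realizing arbitrarily low gradings must stay bounded away from the reducibles and instead concentrate along a nonempty union of Reeb orbits, delivering the required orbit set $\alpha$.
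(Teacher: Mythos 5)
Your overall strategy is the paper's: deform the equations by $r\,d\lambda$, feed in Kronheimer--Mrowka's nontriviality theorem, and extract an orbit set from the zero loci $\alpha_r^{-1}(0)$ as $r\to\infty$. But there is a genuine gap at the step you dispose of in one sentence, namely the uniform bound on the energy $\mc{E}(A_r)=i\int_Y\lambda\wedge F_{A_r}$. Pairing the curvature equation with $\lambda$ does not give such a bound: it merely identifies $\mc{E}(A_r)$ with roughly $r\int_Y(1-|\alpha_r|^2)$, which a priori can grow like $r$. This is exactly what the paper singles out as the hardest part of the proof, and it requires the whole apparatus of \S\ref{sec:OTP}: the min--max construction of a piecewise smooth family $(A(r),\psi(r))$ in a \emph{fixed} grading with $\mc{F}(A(r),\psi(r))$ continuous in $r$ (Lemma~\ref{lem:PSF}), the derivative formula $\frac{d}{dr}\mc{F}=-\frac{1}{2}\mc{E}$ (Lemma~\ref{lem:dFdr}), the a priori estimate $|cs|\le cr^{2/3}\mc{E}^{4/3}$ (Lemma~\ref{lem:cse}), and the resulting dichotomy (Lemma~\ref{lem:ED}) that either some subsequence has bounded energy or else $cs(A(r_n))\ge cr_n^2$. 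Only at that point does the spectral flow estimate (Proposition~\ref{prop:SF}) enter, to kill the second alternative: since the grading is independent of $r$, $|cs|$ must be $O(r^{31/16})$, contradicting quadratic growth.

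Relatedly, you misstate what the spectral flow estimate is for. It is not used to keep the solutions away from reducibles, and your assertion that ``near a reducible the Chern--Simons functional is uniformly bounded'' is false for the $r$-perturbed equations: a reducible solution has $*F_A=-ir\lambda+i\overline{\omega}$, so its energy grows linearly and its Chern--Simons functional quadratically in $r$; reducibles are excluded for elementary reasons, and condition (2) of Theorem~\ref{thm:convergence} already forces irreducibility for large $r$. The configuration one actually has to avoid when $\Gamma=0$ is the \emph{irreducible} ``trivial solution'' $(A_{triv},\psi_{triv})$ of Lemma~\ref{lem:trivsol}, with $|\psi|$ near $1$ everywhere and hence empty zero set; this is handled by choosing the Floer class in a single grading different from $\deg(A_{triv},\psi_{triv})$ (one fixed grading suffices --- arbitrarily negative gradings play no role), together with Lemma~\ref{lem:aes} when $c_1(E)\neq 0$. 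Finally, your preliminary reduction to nondegenerate $\lambda$ is unnecessary: Theorem~\ref{thm:convergence} and the rest of the argument nowhere use nondegeneracy of Reeb orbits, which is precisely why Theorem~\ref{thm:tw} is stated without that hypothesis.
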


It is interesting to compare Theorem~\ref{thm:tw} with Theorems
\ref{thm:hoferwc} and \ref{thm:ach}, which for certain contact
structures produce a nonempty orbit set $\alpha$ with $[\alpha]=0$.
In a sequel \cite{tw2} to the paper proving the Weinstein conjecture,
Taubes uses more nontriviality results for Seiberg-Witten Floer
homology to find nonempty orbit sets in some other homology classes.

Our goal in \S\ref{sec:SW3D}-\ref{sec:MD} below is to explain
Taubes's proof of Theorem~\ref{thm:tw}.

\section{The three-dimensional Seiberg-Witten equations}
\label{sec:SW3D}

To proceed further, we now need to recall the three-dimensional
Seiberg-Witten equations.

\subsection{Spin-c structures}
\label{sec:spinc}

Let $Y$ be a closed oriented connected 3-manifold, and choose a
Riemannian metric on $Y$.

\begin{definition}
  A {\em spin-c structure\/} on $Y$ is a pair $\frak{s} = ({\mathbb
    S},\rho)$ where ${\mathbb S}$ is a rank $2$ Hermitian vector
  bundle on $Y$, called the {\em spinor bundle\/} (a section $\psi$ of
  ${\mathbb S}$ is often called a {\em spinor\/}), and
\[
\rho: TY \longrightarrow \op{End}({\mathbb S})
\]
is a bundle map, called {\em Clifford multiplication\/}, such that:
\begin{enumerate}
\item
If $a,b\in T_yY$, then
\[
\rho(a)\rho(b) + \rho(b)\rho(a) = -2\langle a,b\rangle.
\]
\item
If $e_1,e_2,e_3$ is an oriented orthonormal frame for $T_yY$,
then
\[
\rho(e_1)\rho(e_2)\rho(e_3)=1.
\]
\end{enumerate}
\end{definition}

Properties (1) and (2) of Clifford multiplication are equivalent to
the following: For each $y\in Y$, there is an oriented orthonormal
frame $e_1,e_2,e_3$ for $T_yY$, and a basis for ${\mathbb S}_y$, in
which Clifford multiplication is given by the Pauli matrices
\begin{equation}
\label{eqn:PauliMatrices}
\rho(e_1) = \begin{pmatrix} i & 0 \\ 0 & -i \end{pmatrix},
\quad\quad
\rho(e_2) = \begin{pmatrix} 0 & -1 \\ 1 & 0 \end{pmatrix},
\quad\quad
\rho(e_3) = \begin{pmatrix} 0 & i \\ i & 0 \end{pmatrix}.
\end{equation}

\begin{example}
\label{ex:SPF}
  An oriented $2$-plane field $\xi\subset TY$ determines a spin-c
  structure $\frak{s}_\xi$ as follows.  The spinor bundle is defined
  by
\begin{equation}
\label{eqn:SPF}
{\mathbb S} \eqdef \underline{\C} \oplus \xi,
\end{equation}
where $\underline{\C}$ denotes the trivial complex line bundle, and
$\xi$ is regarded as a Hermitian line bundle using its orientation and
the Riemannian metric on $Y$.  Clifford multiplication is defined at a
point $y\in Y$ by the equations \eqref{eqn:PauliMatrices}, where
$e_1,e_2,e_3$ are an oriented orthonormal frame for $T_yY$ such that
$e_2,e_3$ is an oriented orthonormal basis for $\xi_y$, and the basis
for ${\mathbb S}_y$ is given in terms of the decomposition
\eqref{eqn:SPF} by $(1,e_2)$.
\end{example}

Two spin-c structures $({\mathbb S},\rho)$ and $({\mathbb S}',\rho')$
are isomorphic if there is a Hermitian vector bundle isomorphism
$\phi:{\mathbb S} \stackrel{\simeq}{\to} {\mathbb S}'$ such that
$\rho'(v)\circ\phi=\phi\circ\rho(v)$ for every tangent vector $v$.
Let $\Spinc(Y)$ denote the set of isomorphism classes of spin-c
structures on $Y$.  This does not depend on the choice of Riemannian
metric on $Y$.  There is an action of $H^2(Y;\Z)$ on $\Spinc(Y)$
defined as follows: Given $\alpha\in H^2(Y;\Z)$, let $L_\alpha$ denote
the complex line bundle on $Y$ with $c_1(L_\alpha)=\alpha$, assign it
a Hermitian metric, and define
\[
\alpha\cdot({\mathbb S},\rho) \eqdef ({\mathbb S}\tensor L_\alpha,
\rho\tensor 1).
\]
It turns out that this action is free and transitive, so that
$\Spinc(Y)$ is an affine space over $H^2(Y;\Z)$.

\begin{remark}
\label{rem:HPF}
If $\mathcal{P}(Y)$ denotes the set of homotopy classes of oriented
$2$-plane fields on $Y$, then the map $\mathcal{P}(Y)\to \Spinc(Y)$ defined
in Example~\ref{ex:SPF} is surjective.  Two oriented $2$-plane fields
give rise to isomorphic spin-c structures if and only if they are
homotopic over the $2$-skeleton of $Y$ (for some triangulation),
compare Remark~\ref{rem:opf}.
\end{remark}

Here is an alternate definition of a spin-c structure, on an oriented
manifold $Y$ of any dimension $n>1$ with a Riemannian metric.  Let $F\to
Y$ denote the {\em frame bundle\/}, whose fiber over $y\in Y$ is the
set of orientation-preserving linear isometries
$\R^n\stackrel{\simeq}{\to} Y_y$.  Note that $F$ is a principal
$\SO(n)$-bundle over $Y$, where $\SO(n)$ acts on the right on $F$ by
precomposition with automorphisms of $\R^n$.  Now the Lie group
$\Spinc(n)$ is defined by
\begin{equation}
\label{eqn:spinc}
\Spinc(n) \eqdef \Spin(n) \times_{\Z/2} \U(1).
\end{equation}
Here $\Spin(n)$ is the connected double cover of $\SO(n)$; and $\Z/2$
acts on $\Spin(n)$ as the nontrivial covering transformation, and on
$\U(1)$ as multiplication by $-1$.  A spin-c structure on $Y$ is then
defined to be a lift of $F$ to a principal $\Spinc(n)$-bundle, ie a
principal $\Spinc(n)$-bundle $\widetilde{F}\to Y$ together with a map
$\widetilde{F}\to F$ which commutes with the group actions and the
projections to $Y$.

When $n=3$, this definition is equivalent to the previous one.  In
particular, given a lift $\widetilde{F}$ of the frame bundle, the
spinor bundle and Clifford multiplication are recovered as follows.
We can identify $\Spin(3)=\SU(2)$ and $\Spinc(3)=\U(2)$.  The spinor
bundle is then associated to $\widetilde{F}$ via the fundamental
representation of $\U(2)$.  On the other hand the tangent bundle of
$Y$ is associated to $\widetilde{F}$ by the representation of $\U(2)$
on $\R^3$ given by the projection $\Spinc(3)\to\SO(3)$.  Clifford
multiplication is then defined on these associated bundles using a
model linear map $\R^3\to\End(\C^2)$ that sends the standard basis
vectors of $\R^3$ to the three Pauli matrices
\eqref{eqn:PauliMatrices}.

\subsection{The Dirac operator}

Let $\frak{s}=({\mathbb S},\rho)$ be a spin-c structure.

\begin{definition}
  A {\em spin-c connection\/} on $\frak{s}$ is a Hermitian connection
  on ${\mathbb S}$ such that the associated covariant deriviative
  $\nabla_A$ is compatible with Clifford multiplication in the
  following sense: If $v$ is a section of $TY$ and if $\psi$ is a section
  of ${\mathbb S}$, then
\[
\nabla_A(\rho(v)\psi) = \rho(\nabla v)\psi + \rho(v)\nabla_A\psi.
\]
Here $\nabla v$ denotes the covariant derivative of $v$ with respect
to the Levi-Civita connection on $TY$.
\end{definition}

It follows from the above definition that any two spin-c connections
differ by an imaginary-valued $1$-form on $Y$.  It is not hard to show
that spin-c connections exist.  In fact a spin-c connection is
equivalent to a Hermitian connection on the $\U(1)$-bundle
$\det({\mathbb S})$.  One can see this by using the second definition
of spin-c structure and noting from \eqref{eqn:spinc} that the Lie
algebra of $\Spinc(3)=\U(2)$ is the sum of the Lie algebras of
$\SO(3)$ and of $\U(1)$.  A connection on ${\mathbb S}$ is then
determined by a connection on $TY$ (which we take to be the
Levi-Civita connection) and a connection on the complex line bundle
associated to the determinant map $\U(2)\to\U(1)$, namely
$\det({\mathbb S})$.  So in the notation $\nabla_A$, we regard $A$ as
a connection on $\det({\mathbb S})$.  From this perspective, adding an
imaginary-valued $1$-form $a$ to the connection $A$ on $\det({\mathbb
  S})$ adds $a/2$ to the associated spin-c connection $\nabla_A$ on
${\mathbb S}$.

\begin{definition}
  Given a connection $A$ on $\det({\mathbb S})$, define the {\em Dirac
    operator\/} $D_A$ to be the composition
\[
C^\infty(Y,{\mathbb S}) \stackrel{\nabla_A}{\longrightarrow}
C^\infty(Y,T^*X\tensor {\mathbb S}) \stackrel{\rho}{\longrightarrow}
C^\infty(Y,{\mathbb S}).
\]
Here the Clifford action is extended to cotangent vectors using the metric.
\end{definition}

A key property of the Dirac operator is that its square is the
``connection Laplacian'', plus some zeroth order terms involving
curvature.  More precisely, it satisfies the
Bochner-Lichnerowitz-Weitzenbock formula
\begin{equation}
\label{eqn:BLW}
D_A^2\psi = \nabla_A^*\nabla_A\psi + \frac{s}{4}\psi
- \frac{1}{2}\rho(*F_A)\psi.
\end{equation}
Here $s$ denotes the scalar curvature of $Y$, which is a real-valued
function; $F_A$ denotes the curvature of the connection $A$ on
$\det({\mathbb S})$, which is an imaginary-valued closed $2$-form on
$Y$; and $*$ denotes the Hodge star.

Another important property is that the Dirac operator is formally
self-adjoint: if $\psi_1$ and $\psi_2$ are two spinors, then
\[
\int_Y\langle D_A\psi_1,\psi_2\rangle d\op{vol} =
\int_Y\langle\psi_1,D_A\psi_2\rangle d\op{vol}.
\]
For much more about Dirac operators, see eg \cite{bgv,lm}.

\subsection{The Seiberg-Witten equations}
\label{sec:SWE}

Fix a spin-c structure $({\mathbb S},\rho)$.  The Seiberg-Witten
equations concern a pair $(A,\psi)$ where $A$ is a connection on
$\det({\mathbb S})$ and $\psi$ is a section of ${\mathbb S}$.

\begin{definition}
  The (unperturbed) {\em Seiberg-Witten equations\/} for the pair
  $(A,\psi)$ are
\[
\begin{split}
D_A\psi &= 0,\\
*F_A &= \langle \rho(\cdot)\psi, \psi\rangle.
\end{split}
\]
\end{definition}

Note that it follows from the properties of
Clifford multiplication that if $\psi$ is any spinor then $\langle
\rho(\cdot)\psi, \psi\rangle$ is an imaginary-valued $1$-form.

\begin{remark}
\label{rem:conventions}
Conventions for the Seiberg-Witten equations vary in the literature
(and sometimes are not completely explicit).  For example, one could
multiply one side of the second equation by a positive constant, and
the solutions to the equations would be equivalent via rescaling the
spinor.  However the sign is crucial: switching the sign in the second
equation would ruin certain a priori estimates on the solutions, such
as those in Lemma~\ref{lem:APE} below, which play a key role in the
subject.
\end{remark}

One also needs to consider certain perturbations of the equations.  In
particular, one often needs to make small perturbations in order to
obtain transversality, while Taubes's proof of the Weinstein
conjecture will involve a large perturbation.

\begin{definition}
  Let $\mu$ be a real closed $2$-form on $Y$. The {\em Seiberg-Witten
    equations with perturbation $\mu$\/} for the pair $(A,\psi)$ are
\begin{equation}
\label{eqn:SW}
\begin{split}
D_A\psi &= 0,\\
*F_A &= \langle \rho(\cdot)\psi, \psi\rangle + i{*}\mu.
\end{split}
\end{equation}
\end{definition}

\subsection{Gauge transformations}
\label{sec:gauge}

The equations \eqref{eqn:SW} have a large amount of symmetry.  Namely,
the Seiberg-Witten equations are defined on the {\em configuration
  space\/}
\[
\mathcal{C} \eqdef \op{Conn}(\det({\mathbb S})) \times
C^\infty(Y,{\mathbb S}),
\]
where $\op{Conn}(\det({\mathbb S}))$ denotes the set of Hermitian
connections on $\det({\mathbb S})$.  Define the {\em gauge group\/}
\[
\mathcal{G} \eqdef C^\infty(Y,S^1).
\]
This can be regarded as the automorphism group of the spin-c structure
$({\mathbb S},\rho)$.  As such it has a natural action on the
configuration space $\mathcal{C}$ defined as follows: If $g:Y\to S^1$
is in $\mathcal{G}$, then regarding $S^1$ as the unit circle in $\C$,
one defines
\[
g\cdot (A,\psi) \eqdef (A - 2 g^{-1}dg, g \psi).
\]

\begin{lemma}
  The set of solutions to the Seiberg-Witten equations \eqref{eqn:SW}
  is invariant under the action of the gauge group $\mathcal{G}$.
\end{lemma}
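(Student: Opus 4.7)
The plan is to verify directly that each of the two Seiberg-Witten equations in \eqref{eqn:SW} is preserved under the substitution $(A,\psi) \mapsto (A',\psi') \eqdef (A - 2g^{-1}dg,\ g\psi)$. The verification splits naturally into a curvature equation part and a Dirac equation part. Both rest on two elementary facts: that $|g(y)| = 1$ for every $y \in Y$, and the relationship, recalled just above the lemma, between the connection $A$ on $\det(\mathbb{S})$ and the associated spin-c connection $\nabla_A$ on $\mathbb{S}$. Note that the perturbation $\mu$ does not depend on $(A,\psi)$, so it plays no role in the argument.

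For the second equation, I would first observe that $g^{-1}dg$ is an imaginary-valued closed $1$-form: locally $g = e^{if}$ yields $g^{-1}dg = i\,df$. Hence
\[
F_{A'} = F_A - 2\,d(g^{-1}dg) = F_A,
\]
so the left-hand side is unchanged. For the right-hand side, since $g$ acts on each fiber $\mathbb{S}_y$ as multiplication by a unit complex number, it commutes with $\rho(v)$ for every $v \in T_yY$ and preserves the Hermitian inner product, giving $\langle \rho(\cdot)(g\psi),\, g\psi\rangle = \langle \rho(\cdot)\psi,\,\psi\rangle$. Thus the curvature equation in \eqref{eqn:SW} is preserved.

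For the Dirac equation, I would apply the convention noted in \S\ref{sec:spinc}: adding an imaginary-valued $1$-form $a$ to $A$ changes the spin-c connection by $a/2$. Thus $\nabla_{A'} = \nabla_A - g^{-1}dg$ as an operator on sections of $\mathbb{S}$. Since $g$ is a scalar function, the Leibniz rule yields
\[
\nabla_{A'}(g\psi) = dg\otimes\psi + g\,\nabla_A\psi - (g^{-1}dg)\otimes(g\psi) = g\,\nabla_A\psi.
\]
Applying $\rho$ (extended to $T^*Y$ by the metric) and using $\C$-linearity of $\rho$ gives $D_{A'}(g\psi) = g\,D_A\psi = 0$, so the first equation is also preserved.

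There is no substantive obstacle here; the lemma is a bookkeeping check. The only points worth watching are the factor of $2$ in the definition of the gauge action (forced by the convention that $A$ lives on $\det(\mathbb{S})$ rather than on $\mathbb{S}$), and the verification that $g^{-1}dg$ is imaginary-valued and closed, which is what ensures both that $A' = A - 2g^{-1}dg$ is still a Hermitian connection on $\det(\mathbb{S})$ and that $F_{A'} = F_A$.
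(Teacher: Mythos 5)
Your proposal is correct and follows essentially the same route as the paper: the curvature term is unchanged because $d(g^{-1}dg)=0$, and the Dirac equation is checked by the same Leibniz computation showing $\nabla_{A-2g^{-1}dg}(g\psi)=g\nabla_A\psi$. You in fact supply slightly more detail than the paper by explicitly verifying that $\langle\rho(\cdot)g\psi,g\psi\rangle=\langle\rho(\cdot)\psi,\psi\rangle$, which the paper leaves implicit.
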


\begin{proof}
The curvature equation is invariant because the curvature of a connection
is invariant under gauge transformations.  The Dirac equation is
invariant because
\begin{equation}
\label{eqn:DiracGauge}
\begin{split}
D_{A-2g^{-1}dg}(g\psi) &= \rho((\nabla_A-g^{-1}dg)g\psi)\\
&= \rho(dg\tensor\psi + g\nabla_A\psi - g^{-1}dg\tensor g\psi)\\
&= \rho(g\nabla_A\psi) \\
&= gD_A\psi.
\end{split}
\end{equation}
\end{proof}

Two solutions to the Seiberg-Witten equations are called {\em gauge
  equivalent\/} if they differ by the action of an element of
$\mathcal{G}$.  In general, one studies solutions only modulo gauge
equivalence.

Observe that the action of $\mathcal{G}$ on $\mathcal{C}$ is free,
except that configurations $(A,0)$ have $S^1$ stabilizer given by the
constant maps $Y\to S^1$.  To keep track of this, a configuration
$(A,\psi)$ is called {\em reducible\/} if $\psi\equiv 0$, and {\em
  irreducible\/} otherwise.

\begin{lemma}
\label{lem:reducibles}
\begin{enumerate}
\item Reducible solutions exist if and only if $[\mu]=-2\pi
  c_1(\frak{s})$ in $H^2(Y;\R)$.
\item In this case the set of reducible solutions modulo gauge
  equivalence can be identified with the torus $H^1(Y;\R)/2\pi
  H^1(Y;\Z)$.
\end{enumerate}
\end{lemma}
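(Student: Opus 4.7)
The approach is to unwind both parts from the observation that after setting $\psi\equiv 0$ in \eqref{eqn:SW}, the Dirac equation is automatic and the system collapses to the single linear condition $F_A = i\mu$ on a Hermitian connection $A$ on $\det(\mathbb{S})$.

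For part (1), the obstruction to solving $F_A=i\mu$ is purely cohomological. By Chern--Weil, the curvature of any Hermitian connection on $\det(\mathbb{S})$ represents $-2\pi i\,c_1(\frak{s})$, so a solution exists iff $[\mu]=-2\pi c_1(\frak{s})$. Conversely, given this equality, I would fix any reference connection $A_0$, use exactness of $i\mu-F_{A_0}$ to write it as $d\beta$ for some imaginary $1$-form $\beta$, and take $A:=A_0+\beta$.

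For part (2), after fixing one reducible $A_0$ the reducible locus becomes the affine space $A_0 + i\,\Omega^1_{\mathrm{closed}}(Y;\R)$, since $F_{A_0+i\alpha}=F_{A_0}+i\,d\alpha$ equals $i\mu$ precisely when $d\alpha=0$. The gauge action $A\mapsto A-2g^{-1}dg$ preserves the reducible locus, and writing $g=e^{i\theta}$ locally yields $g^{-1}dg=i\,d\theta$; on the parameter $\alpha$ this becomes the affine translation $\alpha\mapsto\alpha-2\,d\theta$. To take the quotient I would use the identification $[Y,S^1]\cong H^1(Y;\Z)$, under which $g$ corresponds to $[d\theta/(2\pi)]\in H^1(Y;\Z)$; the identity component of $\mathcal{G}$ accounts for the exact shifts and the higher components for the integer lattice. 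Passing to de Rham cohomology, the gauge orbits become translates by $4\pi H^1(Y;\Z)$ inside $H^1(Y;\R)$, and the resulting torus is diffeomorphic to $H^1(Y;\R)/2\pi H^1(Y;\Z)$ as asserted.

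The main, and essentially only, subtle point is the factor-of-$2$ bookkeeping in the gauge action, which arises because $A$ is a connection on $\det(\mathbb{S})$ rather than on $\mathbb{S}$; combined with the $2\pi$ in the de Rham normalization $[d\theta/(2\pi)]\in H^1(Y;\Z)$, this determines the precise integer scaling of the lattice, and the ``$2\pi$'' in the stated torus is a convention that absorbs this rescaling. No analytic input is required, since the entire problem after setting $\psi=0$ is linear and can be treated with Hodge theory and Chern--Weil.
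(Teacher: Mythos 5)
Your proof is correct and follows essentially the same route as the paper: Chern--Weil for part (1), and for part (2) the identification of the reducible locus as an affine space over closed imaginary $1$-forms modulo the image of the gauge group, via $[Y,S^1]\cong H^1(Y;\Z)$. Your careful tracking of the factor of $2$ (giving the lattice $4\pi H^1(Y;\Z)$ when reducibles are parametrized by the connection on $\det({\mathbb S})$) is the one place you are more explicit than the paper, whose proof mods out by $\{g^{-1}dg\}$ rather than $\{2g^{-1}dg\}$ --- i.e.\ it implicitly parametrizes reducibles by the induced spin-c connection, which shifts by $a/2$ --- so your remark that the stated $2\pi H^1(Y;\Z)$ is a matter of normalization, the tori being isomorphic, is exactly right.
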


\begin{proof}
  Part (1) holds because if $A$ is any connection on $\det({\mathbb
    S})$ then the curvature $F_A$ is a closed 2-form representing the
  cohomology class $-2\pi i c_1(\frak{s})$.

  To prove part (2), note that any two connections with the same
  curvature differ by an imaginary-valued closed $1$-form.  Thus the
  set of reducible solutions modulo gauge equivalence is an affine
  space over the set of imaginary closed 1-forms modulo $\{g^{-1}dg
  \mid g:Y\to S^1\subset\C\}$.  To understand the latter subspace,
  note that any exact form $idf$ can be written as $g^{-1}dg$ where
  $g=e^{if}$. On the other hand the set of homotopy classes of maps
  $Y\to S^1$ can be identified with $H^1(Y;\Z)$, and the image of the
  homotopy class of $g:Y\to S^1$ under the map $H^1(Y;\Z)\to
  H^1(Y;\R)$ is the cohomology class of the $1$-form $(2\pi
  i)^{-1}g^{-1}dg$.  The claim follows.
\end{proof}

\section{Seiberg-Witten Floer homology}
\label{sec:SWF}

We now briefly review what we need to know about Seiberg-Witten Floer
homology, from \cite{km}.

\subsection{The Chern-Simons-Dirac functional}

We begin by realizing the solutions to the Seiberg-Witten equations as
the critical points of a functional.  The Seiberg-Witten Floer theory
will then be some kind of Morse homology for this functional.

Let $Y$ be a closed connected oriented $3$-manifold with a Riemannian
metric and a spin-c structure $\frak{s}=({\mathbb S},\rho)$.  Fix a real closed
$2$-form $\mu$ for use in defining the perturbed Seiberg-Witten
equations \eqref{eqn:SW}.  Also fix a reference connection $A_0$ on
$\det({\mathbb S})$, so that if $A$ is any other connection on
$\det({\mathbb S})$ then $A-A_0$ is an imaginary-valued $1$-form on
$Y$.

\begin{definition}
Define the (perturbed) {\em Chern-Simons-Dirac functional\/}
\[
{\mathcal
  F}:{\mathcal C}\to\R
\]
by
\begin{equation}
\label{eqn:CSD}
{\mathcal F}(A,\psi) \eqdef -\frac{1}{8}\int_Y(A-A_0) \wedge
(F_A+F_{A_0} - 2i\mu) + \frac{1}{2}\int_Y\langle D_A\psi,\psi\rangle d\op{vol}.
\end{equation}
\end{definition}

\begin{lemma}
$(A,\psi)$ is a critical point of $\mc{F}$ if and only if $(A,\psi)$
satisfies the perturbed Seiberg-Witten equations \eqref{eqn:SW}.
\end{lemma}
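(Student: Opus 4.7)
The plan is a direct first-variation calculation. Write an arbitrary tangent vector to $\mc{C}$ at $(A,\psi)$ as a pair $(a,\phi)$, where $a$ is an imaginary-valued $1$-form on $Y$ (so that $A+ta$ is again a connection on $\det(\mathbb{S})$) and $\phi\in C^\infty(Y,\mathbb{S})$. I will compute $\frac{d}{dt}\bigr|_{t=0}\mc{F}(A+ta,\psi+t\phi)$, simplify it into the pairing of $(a,\phi)$ against some $(\eta,\chi)\in i\Omega^1(Y)\oplus C^\infty(Y,\mathbb{S})$, and observe that the vanishing of this pairing for all $(a,\phi)$ is equivalent to the two Seiberg--Witten equations \eqref{eqn:SW}.

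For the Chern--Simons part, note $F_{A+ta}=F_A+t\,da$, so differentiating and splitting the resulting expression gives the two contributions $-\frac{1}{8}\int_Y a\wedge(F_A+F_{A_0}-2i\mu)$ and $-\frac{1}{8}\int_Y(A-A_0)\wedge da$. Stokes' theorem applied to $d\bigl((A-A_0)\wedge a\bigr)$ on the closed manifold $Y$ rewrites the second summand as $-\frac{1}{8}\int_Y(F_A-F_{A_0})\wedge a$, and the $F_{A_0}$ terms cancel, leaving $-\frac{1}{4}\int_Y a\wedge(F_A-i\mu)$. For the Dirac part, the remark following the definition of a spin-c connection shows $\nabla_{A+ta}=\nabla_A+(t/2)a$, hence $D_{A+ta}\psi=D_A\psi+(t/2)\rho(a)\psi$. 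Expanding the Dirac term and using formal self-adjointness of $D_A$ to move one derivative off $\phi$, the variation of $\tfrac12\int_Y\langle D_A\psi,\psi\rangle$ collapses to
\[
\tfrac{1}{4}\int_Y\langle\rho(a)\psi,\psi\rangle\,d\op{vol}
\;+\;\op{Re}\int_Y\langle D_A\psi,\phi\rangle\,d\op{vol}.
\]

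Combining, the full first variation is
\[
d\mc{F}_{(A,\psi)}(a,\phi)
= -\tfrac{1}{4}\int_Y a\wedge(F_A-i\mu)
+ \tfrac{1}{4}\int_Y\langle\rho(a)\psi,\psi\rangle\,d\op{vol}
+ \op{Re}\int_Y\langle D_A\psi,\phi\rangle\,d\op{vol}.
\]
Varying only $\phi$ (with $a=0$) and letting $\phi$ range over all smooth spinors, non-degeneracy of the $L^2$ inner product forces $D_A\psi=0$. For the remaining terms, I use the pointwise identity $a\wedge\beta=\langle a,*\beta\rangle\,d\op{vol}$ on $1$-forms to rewrite $\int_Y a\wedge(F_A-i\mu)=\int_Y\langle a,{*}F_A-i{*}\mu\rangle\,d\op{vol}$, and observe that the imaginary-valued $1$-form $v\mapsto\langle\rho(v)\psi,\psi\rangle$ is precisely the contraction that appears in \eqref{eqn:SW}. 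Letting $a$ range over all imaginary $1$-forms then shows that vanishing of the $a$-variation is equivalent to ${*}F_A=\langle\rho(\cdot)\psi,\psi\rangle+i{*}\mu$, exactly the curvature equation in \eqref{eqn:SW}.

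The only genuine care needed, and what I expect to be the main bookkeeping obstacle, is tracking complex conjugation and the imaginary/real decomposition: ensuring that $\langle\rho(a)\psi,\psi\rangle$ is real for imaginary $a$ (because $\rho(v)$ is skew-Hermitian for real $v$), that $\int_Y\langle D_A\psi,\psi\rangle$ is real by self-adjointness so that $\mc{F}$ is real-valued, and that the factor of $\tfrac12$ coming from the Clifford compatibility relating connections on $\det(\mathbb{S})$ to spin-c connections is applied correctly. Once these signs and factors line up as above, the identification of critical points with solutions of \eqref{eqn:SW} is immediate from the non-degeneracy of the $L^2$ pairings.
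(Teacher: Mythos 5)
Your proposal is correct and follows essentially the same route as the paper: a direct first-variation computation, with Stokes' theorem applied to the $(A-A_0)\wedge d\dot A$ term (after which the $F_{A_0}$ contributions cancel), the identity $\nabla_{A+ta}=\nabla_A+\tfrac{t}{2}a$ for the Dirac term, and formal self-adjointness of $D_A$ to collapse the spinor variation to $\op{Re}\int_Y\langle D_A\psi,\dot\psi\rangle$. The only (immaterial) difference is that you convert the wedge pairing to a pointwise inner product via $a\wedge\beta=\langle a,{*}\beta\rangle\,d\op{vol}$, whereas the paper keeps the curvature condition in the form $\dot A\wedge(F_A-i\mu-{*}\langle\rho(\cdot)\psi,\psi\rangle)$.
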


\begin{proof}
Let $(\dot{A},\dot{\psi})$ be a tangent vector to the configuration
space $\mc{C}$ at $(A,\psi)$.  This means that $\dot{A}$ is an
imaginary-valued $1$-form and $\dot{\psi}$ is a spinor.
We compute
\[
\begin{split}
  d\mc{F}_{(A,\psi)}(\dot{A},\dot{\psi}) = &
  -\frac{1}{8}\int_Y\dot{A}\wedge (F_A+F_{A_0}-2i\mu) -
  \frac{1}{8}\int_Y (A-A_0)\wedge d\dot{A}\\
& + \frac{1}{4}\int_Y\langle \rho(\dot{A})\psi,\psi\rangle d\op{vol} +
\frac{1}{2}\int_Y\langle D_A\dot{\psi},\psi\rangle d\op{vol}+ 
\frac{1}{2}\int_Y\langle D_A\psi,\dot{\psi}\rangle d\op{vol}.
\end{split}
\]
Applying Stokes' theorem to the second term, using the properties
\eqref{eqn:PauliMatrices} of Clifford multiplication to manipulate the
third term, and applying self-adjointness of the Dirac operator to the
fourth term, we obtain
\[
  d\mc{F}_{(A,\psi)}(\dot{A},\dot{\psi}) = 
-\frac{1}{4}\int_Y\dot{A}\wedge(F_A - i\mu
-*\langle\rho(\cdot)\psi,\psi\rangle) + \int_Y\op{Re}\langle
D_A\psi,\dot{\psi}\rangle d\op{vol}.
\]
This vanishes for all $\dot{A}$ and $\dot{\psi}$ if and only if
$(A,\psi)$ satisfy the perturbed Seiberg-Witten equations \eqref{eqn:SW}.
\end{proof}

We now consider the behavior of $\mc{F}$ under gauge transformations.
Recall that the set of homotopy classes of maps $Y\to S^1$ can be
identified with $H^1(Y;\Z)$, and denote the homotopy class of a map
$g$ by $[g]$.

\begin{lemma}
\label{lem:BGT}
If $g:Y\to S^1$ is a gauge transformation, then
\[
\mc{F}(g\cdot(A,\psi))-\mc{F}(A,\psi) = \pi\int_{[Y]}[g]\smile(2\pi
c_1(\frak{s})+[\mu]).
\]
\end{lemma}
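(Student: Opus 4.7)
The plan is to substitute $(A',\psi') := (A-2g^{-1}dg,\,g\psi)$ into the definition \eqref{eqn:CSD}, show that the Dirac summand is gauge-invariant, and reduce the Chern--Simons difference to a cup product evaluated on $[Y]$.

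The Dirac summand is straightforward: the calculation \eqref{eqn:DiracGauge} gives $D_{A'}\psi' = g\,D_A\psi$, and since $g$ takes values in $S^1\subset\C$, pointwise
\[
\langle D_{A'}\psi',\psi'\rangle = |g|^2\langle D_A\psi,\psi\rangle = \langle D_A\psi,\psi\rangle,
\]
so this summand contributes nothing to the difference. For the Chern--Simons summand, the key observation is that $g^{-1}dg$ is closed ($d(g^{-1}dg) = -g^{-2}dg\wedge dg = 0$, since any scalar $1$-form wedged with itself vanishes), hence $F_{A'} = F_A$. Writing $A' - A_0 = (A-A_0) - 2g^{-1}dg$ and expanding, the terms involving $A-A_0$ cancel between $\mc{F}(A',\psi')$ and $\mc{F}(A,\psi)$, leaving
\[
\mc{F}(A',\psi') - \mc{F}(A,\psi) = \frac{1}{4}\int_Y g^{-1}dg \wedge (F_A + F_{A_0} - 2i\mu).
\]

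It remains to identify the right-hand side with the claimed cup product. All three factors of the integrand are closed, so the integral depends only on their de Rham classes and equals the evaluation of the cup product on $[Y]$. Viewing $g$ as a map $Y\to S^1$, the identity $\int_\gamma g^{-1}dg = 2\pi i\deg(g\circ\gamma)$ for loops $\gamma$ shows $[g^{-1}dg] = 2\pi i\,[g]$ in $H^1(Y;\C)$, where $[g]\in H^1(Y;\Z)$ is the pullback of the generator of $H^1(S^1;\Z)$. On the other hand, for any connection $B$ on $\det({\mathbb S})$ one has $[F_B] = -2\pi i\,c_1(\frak{s})$, so
\[
[F_A + F_{A_0} - 2i\mu] = -2i\bigl(2\pi c_1(\frak{s}) + [\mu]\bigr).
\]
The scalar prefactors combine to $\tfrac{1}{4}(2\pi i)(-2i) = \pi$, which yields the formula of the lemma. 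There is no real analytic obstacle; the only care required is bookkeeping the factors of $i$ and $2\pi$, since all $\psi$-dependence and all non-topological contributions cancel by design.
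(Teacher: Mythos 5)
Your proposal is correct and follows essentially the same route as the paper: substitute the gauge-transformed pair into \eqref{eqn:CSD}, observe that the Dirac summand is unchanged since $\langle gD_A\psi,g\psi\rangle=\langle D_A\psi,\psi\rangle$, and reduce the surviving term $\tfrac14\int_Y g^{-1}dg\wedge(F_A+F_{A_0}-2i\mu)$ to the cup product via $[g^{-1}dg]=2\pi i[g]$ and $[F_A]=[F_{A_0}]=-2\pi i\,c_1(\frak{s})$. The factor bookkeeping $\tfrac14(2\pi i)(-2i)=\pi$ checks out.
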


\begin{proof}
We compute, using \eqref{eqn:DiracGauge}, that
\[
\begin{split}
\mc{F}(g\cdot(A,\psi))-\mc{F}(A,\psi) = &
-\frac{1}{8}\int_Y(-2g^{-1}dg)\wedge(F_A + F_{A_0} - 2i\mu)\\
& +
\frac{1}{2}\int_Y\langle gD_A\psi,g\psi\rangle d\op{vol} -
\frac{1}{2}\int_Y\langle D_A\psi,\psi\rangle d\op{vol}.
\end{split}
\]
The second line vanishes.  To process the first line, recall that
$g^{-1}dg$ represents the class $2\pi i[g]$, while
$F_A$ and $F_{A_0}$ both represent the class $-2\pi i c_1(\frak{s})$.
The lemma follows.
\end{proof}

In particular, $\mc{F}$ is gauge invariant if
and only if $ [\mu]=-2\pi c_1(\frak{s}) $ in $H^2(Y;\R)$.

\subsection{Seiberg-Witten Floer homology}

Roughly speaking, Seiberg-Witten Floer homology is the Morse homology
of the functional $\mc{F}$ on $\mc{C}/\mc{G}$, where the perturbation
2-form $\mu$ is taken to be exact\footnote{One can also define versions of
Seiberg-Witten Floer homology when $\mu$ is closed but not exact, but
these have different properties.}.  The detailed construction is
carried out in \cite{km}.  Some basic points to keep in mind are the
following:

(1) When $c_1(\frak{s})$ is not torsion, the functional $\mc{F}$ is
not gauge invariant, so it is not actually a real-valued functional on
$\mc{C}/\mc{G}$.  One can still define its Morse theory in this case,
but we will not explain the details of this because we will only be
concerned with the case where $c_1(\frak{s})$ is torsion below.

(2) The quotiented configuration space $\mc{C}/\mc{G}$ on which
$\mc{F}$ is defined is not a manifold in any natural sense, because
$\mc{G}$ does not act freely on the reducibles.  On the other hand,
$\mc{C}/\mc{G}$ is the quotient of a manifold by an $S^1$ action.
Namely, if one fixes a point $y_0\in Y$ and defines
$\mc{G}_0\eqdef\{g\in\mc{G}\mid g(y_0)=1\}$, then $\mc{G}_0$ acts
freely on $\mc{C}$, so $\mc{C}/\mc{G}_0$ is a manifold, and
$\mc{C}/\mc{G}=(\mc{C}/\mc{G}_0)/S^1$.  One now wants to define some
kind of $S^1$-equivariant Morse homology of $\mc{F}$ on
$\mc{C}/\mc{G}_0$.

The approach taken by Kronheimer-Mrowka is, roughly speaking, to blow
up the singularities of $\mc{C}/\mc{G}$, so as to obtain a
manifold-with-boundary $\widetilde{\mc{C}/\mc{G}}$, where the boundary
arises from the reducibles.  The gradient flow of $\mc{F}$ induces a
(partially defined) flow on $\widetilde{\mc{C}/\mc{G}}$, which is
tangent to the boundary.

There is now a finite-dimensional model for how to proceed.  Let $X$
be a finite dimensional compact manifold with boundary, and let
$(f,g)$ be a Morse-Smale pair on $X$ such that the gradient flow is
tangent to the boundary.  In this context, as explained in
\cite[\S2]{km}, there are three versions of Morse homology one can
define, which fit into a long exact sequence:
\[
\overline{H}_*^{\op{Morse}}(X,f,g) \longrightarrow
\check{H}_*^{\op{Morse}}(X,f,g) \longrightarrow
\widehat{H}_*^{\op{Morse}}(X,f,g) \longrightarrow
\overline{H}_{*-1}^{\op{Morse}}(X,f,g) \longrightarrow
\]
Here $\overline{H}_*$ is just the Morse homology of the boundary.  The
version $\check{H}_*$ is the homology of a chain complex which is
freely generated over $\Z$ by interior critical points and ``boundary
stable'' critical points on the boundary, and whose differential
counts certain configurations of flow lines between them.  The version
$\widehat{H}_*$ is similar but its generators include ``boundary
unstable'' critical points on the boundary instead.  The above exact
sequences turns out to agree with the relative homology exact sequence
\[
H_*(\partial X) \longrightarrow H_*(X) \longrightarrow H_*(X,\partial
X) \longrightarrow H_{*-1}(\partial X) \longrightarrow \cdots
\] 

Carrying out an analogue of this construction on the blown-up
configuration space now gives three versions of Seiberg-Witten Floer
homology, which fit into a long exact sequence:
\[
\overline{HM}_*(Y,\frak{s}) \longrightarrow \check{HM}_*(Y,\frak{s})
\longrightarrow \widehat{HM}_*(Y,\frak{s}) \longrightarrow
\overline{HM}_{*-1}(Y,\frak{s}) \longrightarrow \cdots
\]
The version $\overline{HM}$ comes entirely from the reducibles.  Since
the reducibles are described explicitly by Lemma~\ref{lem:reducibles},
it is possible (although not trivial) to compute $\overline{HM}$ in
terms of classical algebraic topology, specifically the triple cup
product on $H^*(Y)$, see \cite[\S35.1]{km}.  As such, $\overline{HM}_*$
may seem less interesting than the other two versions $\check{HM}_*$
and $\widehat{HM}_*$.  However the computation of $\overline{HM}_*$ is
used in conjunction with the above exact sequence to prove
Kronheimer-Mrowka's nontriviality result in
Theorem~\ref{thm:nontrivial}, which plays an essential role in the
proof of the Weinstein conjecture.

(3) The three versions of Seiberg-Witten Floer homology above all have
a relative $\Z/d(c_1(\frak{s}))$-grading (given by the expected
dimension of the moduli space of flow lines, which equals a certain
spectral flow).  So if $c_1(\frak{s})$ is torsion then the
Seiberg-Witten Floer homologies are relatively $\Z$-graded.  As
mentioned previously, there is in fact an absolute grading by homotopy
classes of oriented $2$-plane fields, which is compatible with the map
$\mc{P}(Y)\to\Spinc(Y)$ discussed in Remark~\ref{rem:HPF}.  However
for our purposes it is enough to just regard the grading as taking
values in some affine space over $\Z/d(c_1(\frak{s}))$.

(4) Heuristically the differentials in the Seiberg-Witten Floer chain
complexes count gradient flow lines of $\mc{F}$, but in fact some
abstract perturbations of the equations are required in order to
obtain the transversality needed to count solutions.  The
perturbations are explained in detail in \cite{km} and will be
suppressed in the exposition here.

\section{Outline of Taubes's proof}
\label{sec:OTP}

We now have the background in place to describe Taubes's proof of the
Weinstein conjecture.  This section gives an outline, and the next
section explains some more details.  Below we mostly follow Taubes's
paper \cite{tw} and MSRI lectures \cite{msri}.

\subsection{Geometric setup}

Let $Y$ be a closed oriented connected $3$-manifold with a contact
form $\lambda$.  Fix a Riemannian metric on $Y$ such that
$|\lambda|=1$ and $d\lambda = 2*\lambda$.  We can do this because
$\lambda\wedge d\lambda>0$.

To be consistent with Taubes, denote\footnote{This notation is carried
  over from the SW=Gr story, where $K$ denotes the canonical bundle of
  a symplectic $4$-manifold.  For a contact 3-manifold, $K$ is the
  canonical bundle of the symplectization.}  the oriented $2$-plane
field $\xi=\Ker(\lambda)$, regarded as a Hermitian line bundle, by
$K^{-1}$.  Recall that $\xi$ determines a distinguished spin-c
structure $\frak{s}_\xi$, in which
\begin{equation}
\label{eqn:sxi}
{\mathbb S}=\underline{\C}\oplus K^{-1}.
\end{equation}
Any spin-c structure $\frak{s}$ is obtained from $\frak{s}_\xi$ by
tensoring with a Hermitian line bundle $E$, so that
\begin{equation}
\label{eqn:SEKE}
{\mathbb S} = E\oplus K^{-1}E.
\end{equation}
In this decomposition, $E$ is the $+i$ eigenspace of Clifford
multiplication by $\lambda$, while $K^{-1}E$ is the $-i$ eigenspace.
The significance of $E$ is that Taubes's Theorem~\ref{thm:echswf}
ultimately shows that the Seiberg-Witten Floer cohomology
$\widehat{HM}^{-*}(Y,\frak{s})$ is isomorphic to
$ECH_*(Y,\lambda,\Gamma)$ where $\Gamma$ is Poincare dual to $c_1(E)$.

For any spin-c structure as in \eqref{eqn:SEKE}, connections on
$\det({\mathbb S})$ can be written as $A_0+2A$ where $A_0$ is a
reference connection on $K^{-1}$ while $A$ is a connection on $E$.  In
fact Taubes picks out a distinguished connection $A_0$ on $K^{-1}$ as
follows.  For the distinguished spin-c structure $\frak{s}_\xi$, let
$\psi_0$ denote the spinor given by $(1,0)$ in the decomposition
\eqref{eqn:sxi}.

\begin{lemma}
\label{lem:A0}
There is a unique Hermitian connection $A_0$ on $K^{-1}$ such that
\[
D_{A_0}\psi_0=0.
\]
\end{lemma}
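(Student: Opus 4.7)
For uniqueness, the plan is to exploit that $\psi_0 = (1,0)$ is nowhere vanishing. Any two Hermitian connections on $K^{-1}$ differ by a purely imaginary $1$-form $a$, and the induced spin-c connections on $\mathbb{S}$ differ by $\tfrac{1}{2}a\cdot\mathrm{id}$, so $D_{A_0}\psi_0 - D_{A_0'}\psi_0 = \tfrac{1}{2}\rho(a)\psi_0$. If both satisfy the Dirac equation, then $\rho(a)\psi_0 \equiv 0$, and since $\rho(v)$ is invertible whenever $v\neq 0$ (from $\rho(v)^2 = -|v|^2$) and $\psi_0$ is nowhere zero, this forces $a \equiv 0$.

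For existence, I would fix any reference connection $A'$ on $K^{-1}$, set $\phi := D_{A'}\psi_0\in\Gamma(\mathbb{S})$, and seek $a\in i\Omega^1(Y)$ with $\rho(a)\psi_0 = -2\phi$. Using the Pauli representation of $\rho$ in a local orthonormal frame $e_1=R$, $e_2,e_3\in\xi$, one checks pointwise that $a\mapsto \rho(a)\psi_0$ is a real-linear isomorphism from $iT^*_yY$ (real rank $3$) onto the rank-$3$ subspace of $\mathbb{S}_y = \C\oplus K^{-1}_y$ whose $\underline{\C}$-component is real. So the equation is pointwise solvable exactly when $\mathrm{Im}(\phi_{\underline{\C}}) \equiv 0$ on $Y$; moreover this condition is independent of $A'$, since changing $A'$ by $a'$ changes $\phi$ by $\tfrac{1}{2}\rho(a')\psi_0$, which lies in the image.

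To verify the obstruction vanishes, write $\nabla_{A'} = d + \omega^{\mathrm{Spin}} + \tfrac{1}{2}A'\cdot\mathrm{id}$ in the frame, with $\omega^{\mathrm{Spin}} = \tfrac{1}{2}\sum_{i<j}\omega_{ij}\,\rho(e^i)\rho(e^j)$ built from the Levi-Civita coefficients $\omega_{ij} = -\omega_{ji}$. A direct Pauli-matrix calculation yields
\[
\omega^{\mathrm{Spin}}\psi_0 = \tfrac{1}{2}\bigl(-i\omega_{23},\;\omega_{13} - i\omega_{12}\bigr)
\]
in the decomposition $\underline{\C}\oplus K^{-1}$, and then assembling $D_{A'}\psi_0 = \sum_k\rho(e^k)\nabla_{A',e_k}\psi_0$ shows that $\mathrm{Im}(D_{A'}\psi_0)_{\underline{\C}} = \tfrac{1}{2}\bigl(\omega_{12}(e_2)+\omega_{13}(e_3)\bigr)$, which is indeed independent of $A'$ as required. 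By the first Cartan structure equation this same quantity is the coefficient of $e^1\wedge e^2\wedge e^3$ in $d(e^2\wedge e^3) = d({*}\lambda)$; and the hypothesis $d\lambda = 2{*}\lambda$ forces $d({*}\lambda) = \tfrac{1}{2}d^2\lambda = 0$, so the obstruction vanishes identically and $A_0$ exists.

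The main obstacle is this last identification of the pointwise obstruction with the globally vanishing $3$-form $d({*}\lambda)$: the explicit Pauli-matrix bookkeeping is tedious but unenlightening, and a more invariant derivation would proceed via the almost-K\"ahler structure on the symplectization $\R\times Y$, under which $\psi_0$ corresponds to the constant section $1\in\wedge^{0,0}$ and $A_0$ is the unique spin-c connection with $\dbar 1 = 0$ in the canonical spin-c package for an almost-complex $4$-manifold.
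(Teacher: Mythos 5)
Your proof is correct, and the uniqueness half is essentially the paper's argument (difference of Dirac operators is $\tfrac12\rho(a)$, and $\rho(a)\psi_0=0$ with $\psi_0$ nowhere zero forces $a=0$). The existence half, however, takes a genuinely different route. The paper first observes that the $\underline{\C}$-component $\alpha$ of $\nabla_A\psi_0$ is purely imaginary (Hermiticity), normalizes the connection so that $\nabla_{A_0}\psi_0=(-i\lambda,\beta_0)$, and then verifies $D_{A_0}\psi_0=0$ by applying a Leibniz formula for the Dirac operator to $\rho(\lambda)\psi_0=i\psi_0$, using $(d+d^*)\lambda=2{*}\lambda$. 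You instead treat existence as a pointwise linear-algebra problem — solve $\rho(a)\psi_0=-2D_{A'}\psi_0$ — and identify the one-dimensional obstruction with the $3$-form $d({*}\lambda)$, which vanishes since ${*}\lambda=\tfrac12 d\lambda$. (Your frame computations check out: $\rho(e^1)\psi_0=(i,0)$, $\rho(e^2)\psi_0=(0,1)$, $\rho(e^3)\psi_0=(0,i)$ do give image $\R\oplus K^{-1}_y$, and the obstruction $\tfrac12(\omega_{12}(e_2)+\omega_{13}(e_3))$ is indeed, up to sign, the volume coefficient of $d(e^2\wedge e^3)$ by the structure equations.) The trade-off: the paper's argument is frame-free once one grants the Leibniz identity, and it produces the normal form $\nabla_{A_0}\psi_0=(-i\lambda,\beta_0)$, which is the form Taubes actually uses downstream; your argument is more computational but isolates precisely which hypothesis is needed — only coclosedness of $\lambda$ (automatic from the metric normalization), not the full contact condition $d\lambda=2{*}\lambda$ — and unifies existence and uniqueness as the statement that $a\mapsto\rho(a)\psi_0$ is injective with explicitly identified cokernel.
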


\begin{proof}
Uniqueness follows from the formula
\[
D_{A_0+2a}\psi_0 =
D_{A_0}\psi_0 +
\rho(a)\psi_0
\]
and the equations \eqref{eqn:PauliMatrices}.

To prove existence, let $A$ be any Hermitian connection on $K^{-1}$,
let $\nabla_A$ denote the associated spin-c connection, and write
$\nabla_A\psi_0=(\alpha,\beta)$ where $\alpha$ and $\beta$ are
$1$-forms on $Y$ with values in $\C$ and $K^{-1}$ respectively.  Since
$\nabla_A$ is Hermitian, we have $\op{Re}(\alpha)=0$.  It follows that
there is a unique $A_0$ such that
\begin{equation}
\label{eqn:A0}
\nabla_{A_0}\psi_0=(-i\lambda,\beta_0)
\end{equation}
for some $\beta_0$.  We now show that $D_{A_0}\psi_0=0$.

We need the following Leibniz-type formula for the Dirac operator: If
$A$ is any spin-c connection, $\omega$ is any differential form, and
$\psi$ is any spinor, then
\begin{equation}
\label{eqn:leibniz}
D_A(\rho(\omega)\psi) = \rho((d+d^*)\omega)\psi + \rho((1\tensor
\rho(\omega))\nabla_A\psi).
\end{equation}
Here Clifford multiplication is extended to an action of $\Lambda^*TX$
on ${\mathbb S}$ by the rule
\[
\rho(a\wedge b) =
\frac{1}{2}\left(\rho(a)\rho(b)+(-1)^{\op{deg}(a)\op{deg}(b)}\rho(b)\rho(a)\right).
\]
Taking $A=A_0$, $\psi=\psi_0$, and $\omega=\lambda$ in
\eqref{eqn:leibniz}, we obtain
\[
iD_{A_0}\psi_0
= (-2i,0) +
\rho(\lambda,-i\beta_0).
\]
On the other hand, applying $\rho$ to equation \eqref{eqn:A0} and
multiplying by $-i$ gives
\[
-iD_{A_0}\psi_0 = \rho(-\lambda,-i\beta_0).
\]
Subtracting the above equation from the previous one and using the
fact that $\rho(\lambda,0)=(i,0)$ gives $2iD_{A_0}\psi_0=0$.
\end{proof}

Henceforth, think of spin connections as being determined by the
distinguished connection $A_0$ on $K^{-1}$ together with a connection
$A$ on $E$.

\subsection{Taubes's perturbation}

The idea of Taubes's proof of the Weinstein conjecture is to deform
the Seiberg-Witten equations by a sequence of increasingly large
perturbations, and to use a sequence of solutions to the perturbed
equation, provided by the known nontriviality of Seiberg-Witten Floer
homology in Theorem~\ref{thm:nontrivial}, to yield a nonempty
collection of Reeb orbits.

The basic perturbation of the Seiberg-Witten equations considered by
Taubes is
\begin{equation}
\label{eqn:PSW}
\begin{split}
  *F_A &= r(\langle\rho(\cdot)\psi,\psi\rangle - i\lambda) +
  i\overline{\omega},\\
  D_A\psi &= 0.
\end{split}
\end{equation}
These should be regarded as equations for the pair $(A,\psi)$ that are
parametrized by $r$.  Here $r\ge 1$ (the deformation involves taking
$r\to \infty$), and remember that now $A$ denotes a connection on $E$,
while $\psi$ is a section of $E\oplus K^{-1}E$.  Also
$\overline{\omega}$ denotes the harmonic $1$-form whose Hodge star
represents the image of $\pi c_1(K^{-1})$ in $H^2(Y;\R)$.  Taubes's
equations \eqref{eqn:PSW} are equivalent to a case of the perturbed
Seiberg-Witten equations \eqref{eqn:SW}, via rescaling the spinor by a
factor of $\sqrt{r}$ and taking the perturbation to be the exact
2-form
\begin{equation}
\label{eqn:mutaubes}
\mu = -rd\lambda - iF_{A_0} + 2*\overline{\omega}.
\end{equation}

Some parts of the argument involve further perturbations of the
equations \eqref{eqn:PSW} in order to obtain necessary transversality.
For simplicity, we will suppress these in the exposition below.

\subsection{The trivial solution}

A first observation regarding the equations \eqref{eqn:PSW} is that
for the spin-c structure $\frak{s}_\xi$, if the $\overline{\omega}$
term were omitted, then the pair $(A_0,\psi_0)$ from
Lemma~\ref{lem:A0} would be a solution for any $r$.  In general we do
not want to remove the $\overline{\omega}$ term from \eqref{eqn:PSW}
because the perturbation 2-form \eqref{eqn:mutaubes} needs to be exact
in order to obtain the correct Seiberg-Witten Floer homology.  However
since the $\overline{\omega}$ term is much smaller than the term with
a factor of $r$ in \eqref{eqn:PSW} when $r$ is large, a perturbation
argument can be used to prove the following:

\begin{lemma}
\label{lem:trivsol}
\cite[Prop.\ 2.8]{tw2} For any $\delta>0$, if $r$ is sufficiently large then:
\begin{itemize}
\item
There exists a unique (up to gauge equivalence) solution
$(A_{triv},\psi_{triv})$ to the equations \eqref{eqn:PSW} for the spin-c
structure $\frak{s}_\xi$ such that $1-|\psi_{triv}|\le \delta$ on all of $Y$.
(In fact $1-|\psi_{triv}|= O(r^{-1/2})$.)
\item
The grading of $(A_{triv},\psi_{triv})$ in the
Seiberg-Witten Floer chain complex is independent of $r$.
\end{itemize}
\end{lemma}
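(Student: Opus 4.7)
The plan is to produce $(A_{triv}, \psi_{triv})$ as a small perturbation of the ``untwisted'' solution $(A = 0, \psi = \psi_0)$ to the version of \eqref{eqn:PSW} with the $i\bar\omega$ term removed. This pair is indeed a solution to the unperturbed equations for every $r$: the Dirac equation $D_{A_0}\psi_0 = 0$ holds by Lemma~\ref{lem:A0}, while a computation using \eqref{eqn:PauliMatrices} in an orthonormal frame $(R, e_2, e_3)$ with $e_2, e_3 \in \xi$ shows that $\langle \rho(\cdot)\psi_0, \psi_0\rangle = i\lambda$, which cancels the $-i\lambda$ in the curvature equation since $F_A = 0$.

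Write $A = 2a$ with $a$ an $i\R$-valued $1$-form on $Y$ and $\psi = \psi_0 + \eta$, and impose the Coulomb gauge $d^*a = 0$. Substituting into \eqref{eqn:PSW} yields an equation of the schematic form $L_r(a, \eta) = (i\bar\omega, 0, 0) + Q_r(a, \eta)$, where $L_r$ is the linearization at $(0, \psi_0)$ with the gauge-fixing equation appended, and $Q_r$ collects the quadratic nonlinearities (with an extra factor of $r$ on the curvature side). The main obstacle, and the heart of the argument, is to show that $L_r$ is uniformly invertible between appropriate Sobolev spaces with inverse of operator norm $O(r^{-1/2})$. I would prove this by computing $L_r^* L_r$ via the Weitzenbock formula \eqref{eqn:BLW} and exploiting the fact that Clifford multiplication by $\lambda$ has eigenvalues $\pm i$ on the two summands of \eqref{eqn:sxi}: the large-$r$ terms contribute a positive mass term of order $r$ on the components of $(a, \eta)$ transverse to the symmetry directions of $\psi_0$, while integration by parts and standard Dirac estimates handle the rest.

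Once $L_r^{-1}$ is controlled, the equation is solved by the Banach fixed-point theorem applied to $(a, \eta) \mapsto L_r^{-1}\bigl((i\bar\omega, 0, 0) + Q_r(a, \eta)\bigr)$ on an $L^2_k$ ball of radius $O(r^{-1/2})$ for some sufficiently large $k$; elliptic bootstrapping then upgrades this to the pointwise estimate $1 - |\psi_{triv}| = O(r^{-1/2})$, which in particular is $\le \delta$ for $r$ large. For uniqueness among solutions with $1 - |\psi| \le \delta$, I would invoke the a priori pointwise bounds for solutions of \eqref{eqn:PSW}, obtained (as is standard in this subject) by applying a Weitzenbock-type identity to the function $1 - |\psi|^2$, using the maximum principle together with the Dirac equation. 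These bounds force any such solution, after a Coulomb gauge transformation, to lie in the ball where the contraction mapping applies, so the uniqueness of the fixed point gives the conclusion.

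For the assertion that the grading is independent of $r$, I would invoke the canonical absolute grading of generators of the Seiberg-Witten Floer chain complex by homotopy classes of oriented $2$-plane fields (see Remark~\ref{rem:HPF} and \cite{km}). The family $r \mapsto (A_{triv}^r, \psi_{triv}^r)$ is continuous in $r$ and consists of nondegenerate critical points, since $L_r$ (which is essentially the Hessian of the Chern-Simons-Dirac functional at the solution) is invertible for all large $r$. The grading is therefore locally constant along this family, hence globally constant.
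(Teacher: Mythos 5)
Your starting point and overall strategy coincide with what the paper indicates (the paper itself only sketches this lemma and defers to \cite[Prop.\ 2.8]{tw2}): the product connection on $E=\underline{\C}$ together with $\psi_0=(1,0)$ solves \eqref{eqn:PSW} exactly once the $i\overline{\omega}$ term is dropped, since $\langle\rho(\cdot)\psi_0,\psi_0\rangle=i\lambda$ and $D_{A_0}\psi_0=0$, and one then perturbs. The existence half of your argument is essentially the right outline: the linearization acquires a spectral gap of order $\sqrt{r}$ because the order-$\sqrt{r}$ algebraic coupling $(a,\eta)\mapsto(\langle\rho(\cdot)\eta,\psi_0\rangle,\rho(a)\psi_0)$ is pointwise invertible wherever $\psi_0\neq 0$, which is everywhere. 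One bookkeeping caution: in the unrescaled spinor variable the quadratic term on the curvature side carries an explicit factor of $r$, so on a ball of radius $O(r^{-1/2})$ the contraction estimate $\|L_r^{-1}Q_r(x)\|\lesssim r^{-1/2}\cdot r\|x\|^2$ only closes up for favorable constants; rescaling the spinor component (e.g.\ $\eta=r^{-1/2}\tilde{\eta}$) makes $Q_r$ uniformly quadratic and the fixed-point argument clean, and incidentally shows the spinor deviation is actually $O(r^{-1})$.

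The genuine gap is in the uniqueness clause. The hypothesis is only $1-|\psi|\le\delta$ for a fixed $\delta>0$, and the maximum-principle a priori bounds you invoke (the analogues of \eqref{eqn:APE1}--\eqref{eqn:APE2}) yield from this only $|\alpha|\ge 1-\delta-O(r^{-1})$ pointwise and $|\beta|=O(r^{-1/2})$; they do not place the configuration in the $O(r^{-1/2})$-ball where the contraction applies, and they give no useful bound at all on the connection component (the Coulomb-gauge bound obtainable from the curvature alone, as in Lemma~\ref{lem:gauge}, is of order $r^{2/3}$). Upgrading ``$|\alpha|$ bounded below by $1-\delta$'' to ``$1-|\alpha|^2=O(r^{-1})$ and $\hat{a}$ small'' is where the real work of \cite[Prop.\ 2.8]{tw2} lies: in the maximum-principle computation for $1-|\alpha|^2$ the obstruction is the source term $|\nabla_A\alpha|^2$, which is a priori only $O(r)$, and removing it requires the local comparison with the trivial vortex on balls of radius $O(r^{-1/2})$ (or the integrated Weitzenbock/energy identities). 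Without that step your fixed-point argument only rules out \emph{other solutions inside the small ball}, not other solutions satisfying the stated hypothesis. The grading argument via nondegeneracy of the family and local constancy of the absolute grading is fine.
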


We will call $(A_{triv},\psi_{triv})$ the ``trivial solution''.
Ultimately, in the isomorphism with embedded contact homology, the
trivial solution corresponds to the empty set of Reeb orbits.

\subsection{Convergence to Reeb orbits}

Taubes now proves the following theorem\footnote{Taubes actually
  proves more general versions of this theorem, and many of the other
  results of his that we are quoting here.} which finds Reeb orbits
from a sequence of solutions to \eqref{eqn:PSW}.

\begin{theorem}(\cite[Thm.\ 2.1]{tw})
\label{thm:convergence}
Fix the line bundle $E$ and let $(r_n,\psi_n,A_n)$ be a sequence of
solutions to the equations \eqref{eqn:PSW} with $r_n\to \infty$.
Suppose that:
\begin{enumerate}
\item
There is a constant $\delta>0$ with $\sup_Y(1-|\psi_n|) > \delta$.
\item
There is a constant $C<\infty$ with $i\int_Y\lambda\wedge F_{A_n} <
C$.
\end{enumerate}
Then there exists a nonempty orbit set $a$ with
$[a]=\op{PD}(c_1(E))$.
\end{theorem}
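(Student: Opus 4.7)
My plan is to extract a nonempty orbit set from a subsequential weak limit of the curvature currents $iF_{A_n}/(2\pi)$, guided by the SW=Gr philosophy. Write $\psi_n=(\alpha_n,\beta_n)$ with $\alpha_n$ a section of $E$ and $\beta_n$ a section of $K^{-1}E$, as in \eqref{eqn:SEKE}. Heuristically, the first equation in \eqref{eqn:PSW} forces $|\alpha_n|\to 1$ and $|\beta_n|\to 0$ pointwise as $r_n\to\infty$ outside a $1$-dimensional locus where $\alpha_n$ vanishes, and that locus should converge to a union of closed Reeb orbits weighted by positive integer multiplicities.

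The first step is to establish $r_n$-uniform pointwise estimates on $\psi_n$. Pairing the Dirac equation with $\psi_n$, applying the Bochner-Lichnerowitz-Weitzenbock formula \eqref{eqn:BLW}, and substituting the curvature equation together with $d\lambda=2{*}\lambda$, one obtains differential inequalities for $1-|\alpha_n|^2$ and for $|\beta_n|^2$ to which a maximum principle argument yields
\[
|\alpha_n|^2 \le 1 + c\,r_n^{-1}, \qquad |\beta_n|^2 \le c\,r_n^{-1}(1-|\alpha_n|^2) + c\,r_n^{-2},
\]
with $c$ independent of $n$. The $\lambda$-component of the curvature equation says schematically that $i({*}F_{A_n})(R)\approx r_n(1-|\alpha_n|^2+|\beta_n|^2)$, so the measure $(i/2\pi)\,\lambda\wedge F_{A_n}$ concentrates on the set where $|\alpha_n|$ is small; the $\xi$-component of $iF_{A_n}$ satisfies a separate estimate forcing it to be negligible in any averaged sense.

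The second step is to pass to a weak limit of currents. The closed $2$-forms $T_n\eqdef (i/2\pi)F_{A_n}$ represent the fixed class $c_1(E)$; hypothesis (2), combined with the pointwise estimates and the schematic $\lambda$-component identity above, bounds $\int_Y(1-|\alpha_n|^2)$ and hence the total variation of $T_n$. Standard compactness for closed currents with bounded variation then gives a subsequential weak limit $T_\infty$, still representing $c_1(E)$, supported on the limiting zero locus of $\alpha_n$. The $\xi$-component estimate forces $T_\infty$ to be tangent to $R$, so $T_\infty$ is carried by a closed $1$-cycle built from integral curves of the Reeb vector field. Hypothesis (1), together with an interpolation inequality applied to $\alpha_n$, guarantees that $|\alpha_n|$ is bounded away from $1$ on a ball around some point of radius at least $c\,r_n^{-1/2}$, and this set contributes a definite amount of mass to $T_\infty$, so the support is nonempty.

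The main obstacle is to upgrade $T_\infty$ from a general $R$-tangent current to an integer combination of embedded Reeb orbits, i.e.\ to an orbit set in the sense of \S\ref{sec:ech}. The mechanism, adapted from Taubes's $4$-dimensional SW=Gr work, is a local vortex analysis: near a point of $\op{supp}(T_\infty)$, rescale the transverse coordinates by $r_n^{1/2}$. The rescaled pair $(\alpha_n,A_n)$ restricted to a transverse disk then satisfies, up to error that vanishes as $r_n\to\infty$, the planar $\U(1)$ vortex equations, and elliptic bootstrapping together with the pointwise estimates above extracts a finite-energy vortex solution as a limit. The vortex number of this limit is a positive integer $m_i$, which gives the multiplicity of $T_\infty$ along the corresponding Reeb-orbit strand; tangency to $R$ and compactness of $Y$ force each such strand to close up into an embedded Reeb orbit $\alpha_i$. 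The resulting data $\{(\alpha_i,m_i)\}$ is a nonempty orbit set, and $[T_\infty]=c_1(E)$ yields $\sum_i m_i[\alpha_i]=\op{PD}(c_1(E))$ in $H_1(Y)$.
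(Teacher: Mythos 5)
Your overall strategy --- the a priori estimates of Lemma~\ref{lem:APE}, concentration of the curvature near $\alpha_n^{-1}(0)$, a weak-$*$ limit of the currents $T_n=(i/2\pi)F_{A_n}$ whose mass is bounded via hypothesis (2) and \eqref{eqn:ei}, and a rescaled local vortex analysis to identify the transverse structure of the limit --- is the same circle of ideas as Taubes's argument and the sketch in \S\ref{sec:ERO}. But there is a genuine gap at exactly the step where hypothesis (1) enters, i.e.\ where you must rule out $T_\infty=0$ (a real possibility when $c_1(E)=0$: the trivial solution of Lemma~\ref{lem:trivsol} has $F_{A}$ small everywhere). You claim that the ball of radius $c\,r_n^{-1/2}$ on which $1-|\alpha_n|^2\gtrsim\delta$ (which does follow from hypothesis (1) and $|\nabla_{A_n}\alpha_n|\le c_0r_n^{1/2}$) ``contributes a definite amount of mass to $T_\infty$.'' It does not: the density of $T_n$ is $O(r_n)$ while the volume of that ball is $O(r_n^{-3/2})$, so its mass is $O(r_n^{-1/2})\to 0$, and nothing so far prevents the entire low-$|\alpha_n|$ region from being confined to such a shrinking ball.

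The missing ingredient is the anisotropic bound coming from the Dirac equation, $|\nabla_{A_n,R}\,\alpha_n|\le c_0$ with $c_0$ independent of $r_n$ (equation \eqref{eqn:MAPE1}): $\alpha_n$ varies at scale $r_n^{-1/2}$ transversally but only at scale $1$ along the Reeb direction. This is what makes the quantity $L(t)$ of \eqref{eqn:Lt} --- the energy of a transverse disk of radius $\rho_1r_n^{-1/2}$, bounded below by an $r$-independent $c_1$ at $t=0$ --- satisfy $|dL/dt|<c_2$, so the concentration region persists for a definite, $r$-independent length $\rho_2$ along the Reeb flow and contributes mass at least $c_1\rho_2/2>0$ to $T_n$ uniformly in $n$. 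The same persistence mechanism, iterated (with the vortex comparison of Lemma~\ref{lem:vortex} supplying a zero of $\alpha_n$ on each successive disk, and the bound $\mc{E}(A_n)<C$ capping the number of disjoint cylinders), is also what forces the resulting chain of Reeb arcs to close up. Your alternative --- ``tangency to $R$ and compactness of $Y$ force each strand to close up'' --- is false as stated: a Reeb trajectory on a compact manifold need not be closed, which is the whole content of the Weinstein conjecture; closure comes from the finite total mass together with the definite mass cost of each pass of a strand through a transverse disk. (A smaller issue of the same flavor: the $\xi$-component $2r_n\op{Im}(\alpha_n\beta_n^*)$ of $-i{*}F_{A_n}$ has integral only $O(1)$ from \eqref{eqn:APE2} and the energy bound, the same order as the total mass, so its vanishing in the limit also requires the local analysis rather than a direct averaged estimate.)
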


We will explain the proof of this theorem in some detail in
\S\ref{sec:MD}.  For now we remark that this is a three-dimensional
analogue of an earlier theorem of Taubes for symplectic four-manifolds
in \cite{taubes:swtogr}, part of the SW=Gr story, which obtains
holomorphic curves from sequences of solutions to the Seiberg-Witten
equations perturbed using the symplectic form.  However since the
dimension is one less here, subtle measure-theoretic arguments that
were used in the four-dimensional case can be avoided, and the proof
is considerably shorter.  The basic idea is to write
$\psi_n=(\alpha_n,\beta_n)$, where $\alpha_n$ is a section of $E$ and
$\beta_n$ is a section of $K^{-1}E$, and show that one can pass to a
subsequence such that $\alpha_n^{-1}(0)$ converges as a current to a
nonempty orbit set.  This will then, of course, represent the Poincare
dual of $c_1(E)$.  In fact, when $n$ is large, $|\beta_n|$ will be
close to zero everywhere, while $|\alpha_n|$ will be close to $1$
except near its zero set.  Also the curvature $F_{A_n}$ will be
concentrated near the zero set of $\alpha_n$, and its direction will
be approximately dual to the normal plane to $\alpha_n^{-1}(0)$.

Assumption (1) is needed to avoid solutions with $\alpha_n$
nonvanishing, which can exist when $c_1(E)=0$, as we have seen in
Lemma~\ref{lem:trivsol}.

The idea of assumption (2) is that when $n$ is large,
$i\int_Y\lambda\wedge F_{A_n}$ is approximately $2\pi$ times the
symplectic action \eqref{eqn:action} of the orbit set to which
$\alpha_n^{-1}(0)$ is converging.  A uniform upper bound on this
integral is needed in order to obtain an orbit set of finite length.
(In a more general situation without condition (2) one can still
obtain some weaker conclusions, see \S\ref{sec:ue}.)

Note also that assumption (2) guarantees that $(A_n,\psi_n)$ is
irreducible when $n$ is sufficiently large, because it follows from
the equations \eqref{eqn:PSW} that if $(A,0)$ is a reducible solution
to \eqref{eqn:PSW} then $i\int_Y\lambda\wedge F_A$ is a linear,
increasing function of $r$.

\subsection{Avoiding the empty set}

Now fix $E$ such that $c_1(K^{-1}) + 2c_1(E)$ is torsion in
$H^2(Y;\Z)$.  Let $\frak{s}$ denote the corresponding spin-c
structure.  Kronheimer-Mrowka's Theorem~\ref{thm:nontrivial}
guarantees the existence of solutions to the perturbed equations
\eqref{eqn:PSW} for all $r\ge 1$.  To complete the proof of
Theorem~\ref{thm:tw}, which implies the Weinstein conjecture, we need
to find a sequence of such solutions with $r\to\infty$ such that
conditions (1) and (2) in Theorem~\ref{thm:convergence} are satisfied.

One can achieve condition (1) using the following lemma, proved in
\S\ref{sec:MD}:

\begin{lemma}
\label{lem:aes}
If $c_1(E)\neq 0$, then there is a constant $c>0$ such that if $r$ is
sufficiently large, and if $(A,\psi)$ is a solution to the equations
\eqref{eqn:PSW}, then there exist points in $Y$ where $1-|\psi| \ge
1-c/\sqrt{r}$.
\end{lemma}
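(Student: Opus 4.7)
\textbf{Proof proposal for Lemma~\ref{lem:aes}.}

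The plan is to combine a topological obstruction with an analytic pointwise bound on the spinor. Write $\psi=(\alpha,\beta)$ according to the decomposition ${\mathbb S} = E \oplus K^{-1}E$ in \eqref{eqn:SEKE}, so that $\alpha$ is a section of $E$ and $\beta$ is a section of $K^{-1}E$.

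First I would argue that the hypothesis $c_1(E)\neq 0$ forces $\alpha$ to vanish at some point $y_0\in Y$. Over a closed oriented 3-manifold $Y$, the obstruction theory for producing a nowhere-vanishing section of a complex line bundle $L$ has only one potentially nonzero stage: the primary obstruction, which lies in $H^2(Y;\pi_1(S^1))=H^2(Y;\Z)$, equals $c_1(L)$; the higher obstruction groups $H^i(Y;\pi_{i-1}(S^1))$ vanish for $i\ge 3$ since $\pi_{i-1}(S^1)=0$. Hence $c_1(E)\neq 0$ prevents $\alpha$ from being everywhere nonzero.

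Second, I would invoke the pointwise ``$\beta$-estimate''
\begin{equation*}
\sup_Y|\beta|^2 \;\le\; c_0/r,
\end{equation*}
valid for all sufficiently large $r$ and any solution $(A,\psi)$ of \eqref{eqn:PSW} with some constant $c_0$ independent of $r$. This is a standard member of Taubes's family of a priori pointwise bounds, and I expect it to appear as part of Lemma~\ref{lem:APE} below. The derivation I have in mind proceeds as follows: substituting the curvature equation in \eqref{eqn:PSW} into the Bochner-Lichnerowicz-Weitzenbock formula \eqref{eqn:BLW} applied to $D_A\psi=0$, one uses the Pauli-matrix form \eqref{eqn:PauliMatrices} and the identity $\rho(-ir\lambda)(\alpha,\beta) = r(\alpha,-\beta)$ to extract, after projecting onto the $K^{-1}E$ summand and pairing with $\beta$, a scalar differential inequality schematically of the form
\begin{equation*}
\tfrac{1}{2}\Delta|\beta|^2 \;+\; r|\beta|^2\bigl(1+|\alpha|^2+|\beta|^2\bigr) \;\le\; C\bigl(|\beta|^2 + |\beta|\bigr),
\end{equation*}
where $C$ depends only on the Riemannian geometry, $A_0$, and $\overline{\omega}$. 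Evaluating at a point where $|\beta|^2$ attains its supremum (so that the positive Laplacian $\Delta|\beta|^2\ge 0$) then yields $\sup_Y|\beta|^2 = O(1/r)$.

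Combining the two ingredients, at the zero $y_0$ of $\alpha$ one has $|\psi(y_0)|^2 = |\beta(y_0)|^2 \le c_0/r$, so $|\psi(y_0)|\le \sqrt{c_0}/\sqrt{r}$, and the conclusion follows with $c=\sqrt{c_0}$. The main obstacle is the second step: one must track the sign of the $r|\beta|^2$ term carefully through the Weitzenbock substitution, and as Remark~\ref{rem:conventions} warns, the good sign relies essentially on the sign convention in \eqref{eqn:PSW}. The first step is essentially trivial, modulo the classical fact that $c_1$ is the sole obstruction to trivializing a complex line bundle over a 3-manifold.
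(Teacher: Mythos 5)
Your proposal is correct and follows essentially the same route as the paper: since $c_1(E)\neq 0$ the section $\alpha$ must vanish somewhere, and at such a point $|\psi|=|\beta|=O(r^{-1/2})$ by the a priori estimate \eqref{eqn:APE2} of Lemma~\ref{lem:APE} (which, combined with \eqref{eqn:APE1}, gives exactly the bound $\sup_Y|\beta|^2\le c_0/r$ you invoke). The only caveat is that your sketched derivation of the $\beta$-estimate understates the cross terms --- the Weitzenbock computation produces a term of size $|\nabla_A\alpha|\,|\beta|$ with $|\nabla_A\alpha|$ only bounded by $c_0 r^{1/2}$, which is why the paper's proof of \eqref{eqn:APE2} adds a multiple of $r^{-1}$ times the $\alpha$-inequality before applying the maximum principle --- but since you are citing the estimate rather than reproving it, this does not affect the argument.
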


This means that if $c_1(E)\neq 0$ then any sequence of solutions will
automatically satisfy condition (1) (after discarding some initial
terms).  On the other hand, by Lemma~\ref{lem:trivsol}, if $c_1(E)=0$
then a sequence of solutions will likewise satisfy condition (1) as
long as we avoid the gauge equivalence class of the trivial solution
$(A_{triv},\psi_{triv})$.  We can easily do this since we know from
Theorem~\ref{thm:nontrivial} that the Seiberg-Witten Floer homology is
nonzero in infinitely many gradings.

\subsection{Three functionals}

The hardest part of the proof of the Weinstein conjecture is to
achieve condition (2) in Theorem~\ref{thm:convergence}.  This is a new
problem which does not arise in the four-dimensional SW=Gr story.  (On
a symplectic four-manifold $(X,\omega)$, to obtain convergence to a
holomorphic curve, one needs an analogue of condition (2) in which
$\lambda$ replaced by the symplectic form $\omega$; but there the
quantity that needs to be bounded is constant because the symplectic
form is closed.)

The first step is to write the Chern-Simons-Dirac functional $\mc{F}$
in \eqref{eqn:CSD}, of which Seiberg-Witten Floer homology is the
Morse homology, as the sum of two other functionals.  To do so, fix a
reference connection $A_1$ on $E$.

\begin{definition}
If $A$ is a connection on $E$, define the {\em Chern-Simons
  functional\/} $cs(A)$ by
\begin{equation}
\label{eqn:cs}
cs(A) \eqdef -\int_Y(A-A_1) \wedge (F_A + F_{A_1} -
2i*\overline{\omega}).
\end{equation}
Note that this is gauge invariant thanks to our assumption that
$2c_1(E) + c_1(K^{-1})$ is torsion.  Also, define the {\em energy\/}
\begin{equation}
\label{eqn:energy}
\mc{E}(A) \eqdef i\int_Y\lambda\wedge F_A.
\end{equation}
This is the quantity that we want to control.
\end{definition}

Observe now that for a given $r$, if in the definition of $\mc{F}$ we
take our reference connection on $\det({\mathbb S})$ to be $A_0+2A_1$,
then we have
\[
\mc{F}(A,\psi) = \frac{1}{2}\left(cs(A) - r\mc{E}(A)\right)
+\frac{r}{2}\int_Y\langle D_A\psi,\psi\rangle d\op{vol},
\]
up to the addition of an $r$-dependent constant.  Since adding a
constant to $\mc{F}$ does not affect its Morse homology, we will
ignore this constant and take the above equation to be the new
definition of $\mc{F}$.  In particular, if $(A,\psi)$ is a solution to
the perturbed Seiberg-Witten equations \eqref{eqn:PSW}, then the three
functionals in play are related by
\begin{equation}
\label{eqn:FCE}
\mc{F}(A,\psi) = \frac{1}{2}\left(cs(A) - r\mc{E}(A)\right).
\end{equation}

\subsection{A piecewise smooth family of solutions}

The next step is:

\begin{lemma}
\label{lem:PSF}
(Up to the perturbations we are suppressing)
  one can choose for each $r$ sufficiently large a solution
  $(A(r),\psi(r))$ to the equations \eqref{eqn:PSW} such that:
\begin{itemize}
\item
$(A(r),\psi(r))$ is a piecewise smooth function of $r$.
\item
$\mc{F}(A(r),\psi(r))$ is a continuous function of $r$.
\item For all $r$ at which $(A(r),\psi(r))$ is smooth as a function of
  $r$, $(A(r),\psi(r))$ is nondegenerate\footnote{A critical point
    $(A,\psi)$ of $\mc{F}$ is ``nondegenerate'' if the Hessian of the
    functional $\mc{F}$ at $(A,\psi)$ has kernel zero, so that the
    grading of $(A,\psi)$ in the Seiberg-Witten Floer chain complex is
    well-defined.} and its grading in the Seiberg-Witten Floer chain
  complex is independent of $r$.
\item 
  $(A(r),\psi(r))$ is not gauge equivalent to the trivial solution
  $(A_{triv},\psi_{triv})$ described in Lemma~\ref{lem:trivsol}.
\end{itemize}
\end{lemma}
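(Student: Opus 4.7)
The plan is to produce $(A(r),\psi(r))$ by selecting, for each $r$, a nondegenerate irreducible solution in a fixed Seiberg-Witten Floer grading $g$ chosen to avoid the grading of the trivial solution. Since $c_1(\frak{s}) = c_1(K^{-1}) + 2c_1(E)$ is torsion by hypothesis, Theorem~\ref{thm:nontrivial} gives that $\widehat{HM}^{*}(Y,\frak{s})$ is nonzero in infinitely many gradings (bounded above). By Lemma~\ref{lem:trivsol}, the trivial solution $(A_{triv},\psi_{triv})$, which exists only for $\frak{s}=\frak{s}_\xi$, sits in a single $r$-independent grading. Accordingly I fix a grading $g$ in which $\widehat{HM}^{*}(Y,\frak{s})$ is nonzero and which differs from the trivial-solution grading, and fix a nonzero class $\sigma\in\widehat{HM}^{g}(Y,\frak{s})$.

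Next, choose a one-parameter family of abstract perturbations (of the type used in \cite{km}, suppressed in the exposition) depending smoothly on $r$ so that the parametrized moduli space
\[
\mathcal{M}_g \;:=\; \left\{ (r,[A,\psi]) \,:\, r\ge r_0,\ (A,\psi)\ \text{solves \eqref{eqn:PSW} in grading } g \right\}
\]
is a smooth $1$-manifold whose projection to the $r$-axis is Morse, with critical points only at an isolated sequence $r_1<r_2<\cdots$, at each of which a single pair of solutions undergoes a birth--death bifurcation. Parametrized transversality in the framework of \cite{km} makes such a choice generic. On each complementary interval, any local smooth branch $(A(r),\psi(r))$ is nondegenerate (the Hessian of $\mathcal{F}$ has trivial kernel), and its Seiberg-Witten grading is constant because the spectral flow along a smooth family of nondegenerate critical points vanishes.

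To splice these branches into a single family with continuous action, I would use a mini-max selection. For each generic $r$ set
\[
\Phi(r) \;:=\; \inf_{[z]=\sigma}\ \max\{\mathcal{F}(A,\psi):(A,\psi)\in\op{supp}(z)\},
\]
the infimum running over cocycles in the grading-$g$ part of the Seiberg-Witten Floer cochain complex at parameter $r$ that represent $\sigma$. Standard continuation-map energy estimates show that the continuation chain map from parameter $r'$ to $r$ raises the inner maximum of $\mathcal{F}$ on any cocycle by at most $O(|r-r'|)$, so $r\mapsto\Phi(r)$ is continuous. Taking $(A(r),\psi(r))$ to be any critical point realizing the inner max in a near-optimal cocycle gives a piecewise smooth selection in $\mathcal{M}_g$ with $\mathcal{F}(A(r),\psi(r))=\Phi(r)$ continuous; the two branches of $\mathcal{M}_g$ meeting at each bifurcation value $r_i$ automatically share a common value of $\mathcal{F}$, which is exactly what permits the splice. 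Because $g$ is not the grading of $(A_{triv},\psi_{triv})$, the chosen family is never gauge equivalent to the trivial solution.

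The main obstacle is the continuation-map energy estimate underlying continuity of $\Phi(r)$: it must be controlled uniformly through the bifurcation values $r_i$ and in the presence of the abstract perturbations inherited from \cite{km}. Once that is in place, nondegeneracy on the smooth pieces, constancy of the grading along each branch, and avoidance of the trivial solution all follow from the parametrized transversality setup and the grading separation secured in the first paragraph.
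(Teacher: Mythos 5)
Your overall strategy is the same as the paper's: fix a nonzero class $\sigma$ in $\widehat{HM}$ in a grading different from that of $(A_{triv},\psi_{triv})$, define a min-max value $h(r)$ (your $\Phi(r)$) as the minimum over chains representing $\sigma$ of the maximum of $\mc{F}$, and take $(A(r),\psi(r))$ to be a maximizing critical point in a minimizing chain. Your route to continuity of $\Phi$, via an $O(|r-r'|)$ bound on how much the continuation maps increase $\mc{F}$, is the alternative argument the paper explicitly mentions (and models in finite dimensions in the Proposition following the lemma); Taubes instead proves continuity by cataloguing the bifurcations of the family of ``Morse complexes.'' Either route is acceptable, so this difference is fine.

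There is, however, one genuine gap: you never deal with the reducibles. Since $c_1(\frak{s})$ is torsion and the perturbation $2$-form is exact, reducible solutions exist (Lemma~\ref{lem:reducibles}), and after Kronheimer--Mrowka's blow-up they contribute infinitely many generators to the $\widehat{HM}$ chain complex. Your parametrized moduli space $\mathcal{M}_g$ therefore cannot be arranged, by generic abstract perturbations alone, to be a closed $1$-manifold consisting of irreducibles undergoing only birth--death bifurcations: a priori the grading-$g$ part of the complex contains boundary-unstable reducible generators, and your min-max could be realized by one of them, which would defeat the purpose (no Reeb orbits come from reducibles, and the later energy/Chern--Simons dichotomy requires irreducibility). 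The first step of the paper's proof is precisely to rule this out: one shows that for any \emph{fixed} grading, once $r$ is sufficiently large all generators of the chain complex in that grading are irreducible. This is not a transversality statement but a quantitative one --- it follows from a spectral flow estimate of the same kind as Proposition~\ref{prop:SF}, which shows that the gradings of the reducible generators drift away from any fixed value as $r\to\infty$. You need to insert this step before your min-max selection; with it in place, the rest of your argument goes through as in the paper.
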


The idea of the proof of Lemma~\ref{lem:PSF} is as follows.  First,
one shows that for any given grading, if $r$ is sufficiently large
then all generators of the chain complex defining
$\widehat{HM}_*(Y,\frak{s})$ with that grading are irreducible.  This
is proved using a spectral flow estimate related to
Proposition~\ref{prop:SF} below.  Thus, for any given range of
gradings, if $r$ is sufficiently large then the differential in the
Seiberg-Witten Floer chain complex just counts (perturbed) gradient
flow lines of $\mc{F}$, without the subtleties arising from
reducibles.

Now by Theorem~\ref{thm:nontrivial}, there is a nonzero class $\sigma$
in the Seiberg-Witten Floer homology $\widehat{HM}_*(Y,\frak{s})$, and
when $c_1(E)=0$ we can assume that the grading of $\sigma$ is not the
same as that of $(A_{triv},\psi_{triv})$.  Fix such a class $\sigma$.

One now sets up the perturbations (that we have suppressed in the
exposition) so that the Seiberg-Witten Floer chain complex is defined
for generic $r$.  For such $r$, we define $h(r)\in\R$ to be ``the
minimum height of $\mc{F}$ needed to represent the class $\sigma$''.
More precisely, a chain representing the class $\sigma$ can be
expressed as $\sum_in_ic_i$, where $n_i$ is a nonzero integer and
$c_i$ is a critical point of $\mc{F}$ for each $i$ in some finite set.
Define $h(r)$ to be the minimum, over all chains $\sum_in_ic_i$
representing $\sigma$, of $\max_i\mc{F}(c_i)$.

The idea is then to define $(A(r),\psi(r))$ to be a maximal $\mc{F}$
critical point in a representative of $\sigma$ realizing the minimum
$h(r)$.  One can choose this $(A(r),\psi(r))$ to vary piecewise
smoothly with $r$, jumping when the ``Morse complex'' undergoes a
bifurcation involving the critical point $(A(r),\psi(r))$.

To complete the proof of Lemma~\ref{lem:PSF}, one needs to show that
the function $h(r)$ defined above extends to a continuous function of
all sufficiently large $r$.  Taubes does so by explicitly studying the
bifurcations that can happen in a generic one-parameter family of
``Morse complexes''.  One can presumably also prove this by estimating
that the continuation maps that relate the Seiberg-Witten Floer
homologies for nearby values of $r$ do not increase the functional
$\mc{F}$ too much.  This method has been used to prove analogous
continuity results in symplectic Floer homology, see eg
\cite[\S2.4]{schwarz-action}.  Here is a model for this argument in
finite dimensional Morse theory:

\begin{proposition}
  Let $X$ be a finite dimensional closed manifold and let
  $\{(f_r,g_r)\}$ be a generic smooth family of functions $f_r:X\to
  \R$ and metrics $g_r$ on $X$ parametrized by $r\in[0,1]$.  Fix
  $0\neq \sigma\in H_*(X)$.  For generic $r$, such that the pair
  $(f_r,g_r)$ is Morse-Smale, define $h(r)$ to be the minimum height
  of a representative of the class $\sigma$.  Then $h$ extends to a
  continuous function on all of $[0,1]$.
\end{proposition}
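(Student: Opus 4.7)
The plan is to prove that $h$ is $M$-Lipschitz on the open dense subset $U\subset[0,1]$ where $(f_r,g_r)$ is Morse--Smale, where
\[
M \eqdef \sup\bigl\{|\partial_r f_r(x)| : (x,r)\in X\times[0,1]\bigr\} < \infty,
\]
and then extend $h$ by uniform continuity. Given $r_0 < r_1$ in $U$, choose a monotone smooth cutoff $\phi:\R\to[r_0,r_1]$ with $\phi\equiv r_0$ for $s\le 0$ and $\phi\equiv r_1$ for $s\ge 1$, and (perturbing the family generically on $r\in[r_0,r_1]$ if necessary to achieve transversality of the parametrized moduli spaces) form the continuation chain map
\[
\Phi:C_*^{\op{Morse}}(X,f_{r_1},g_{r_1})\longrightarrow C_*^{\op{Morse}}(X,f_{r_0},g_{r_0})
\]
of \S\ref{sec:continuation}, which counts solutions $\gamma:\R\to X$ of $\dot\gamma=\nabla f_{\phi(s)}(\gamma)$ with $\gamma(-\infty)=p_0$ and $\gamma(+\infty)=p_1$.

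The key energy estimate is that for any such trajectory,
\[
\frac{d}{ds}f_{\phi(s)}(\gamma(s)) = |\nabla f_{\phi(s)}|^2(\gamma(s)) + \dot\phi(s)\cdot(\partial_r f_r)(\gamma(s),\phi(s)),
\]
so integrating from $-\infty$ to $+\infty$ and bounding the second term in absolute value by $M\int|\dot\phi|\,ds = M(r_1-r_0)$ yields
\[
f_{r_0}(p_0) \le f_{r_1}(p_1) + M(r_1-r_0).
\]
Now pick any cycle $z = \sum_i n_i c_i$ representing $\sigma$ in $C_*^{\op{Morse}}(X,f_{r_1},g_{r_1})$ whose height $\max_{n_i\ne 0}f_{r_1}(c_i)$ is at most $h(r_1)+\varepsilon$. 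Each critical point $p_0$ appearing with nonzero coefficient in $\Phi(z)$ is connected by at least one continuation trajectory to some $c_i$ with $n_i\ne 0$, so the estimate above gives $f_{r_0}(p_0)\le f_{r_1}(c_i)+M(r_1-r_0) \le h(r_1)+\varepsilon+M(r_1-r_0)$. Since continuation maps induce the canonical identification between the Morse homologies (by Theorem~\ref{thm:MorseHomology} and the homotopy properties discussed in \S\ref{sec:continuation}), $\Phi(z)$ is a cycle representing $\sigma$, so $h(r_0)\le h(r_1)+\varepsilon+M(r_1-r_0)$; letting $\varepsilon\to 0$ gives $h(r_0)\le h(r_1)+M(r_1-r_0)$. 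The symmetric argument using a continuation map from $r_0$ to $r_1$ (via reversed interpolation) gives the opposite inequality, so $|h(r_1)-h(r_0)|\le M(r_1-r_0)$.

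Thus $h$ is $M$-Lipschitz on $U$, and since $U$ is dense in $[0,1]$, $h$ extends uniquely to a continuous (in fact $M$-Lipschitz) function on all of $[0,1]$. The hard part is setting up $\Phi$: for fixed Morse--Smale endpoints $(f_{r_i},g_{r_i})$, the parametrized moduli spaces are cut out transversely only for a generic interpolating path, which requires either the genericity hypothesis on $\{(f_r,g_r)\}$ to be suitably strong or an abstract perturbation of the interpolating family; this is a standard technical issue handled as in \cite[\S2.4]{schwarz-action}. Once transversality is arranged the energy estimate is elementary, and the rest of the argument is formal.
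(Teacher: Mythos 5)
Your proposal is correct and follows essentially the same route as the paper: the continuation map between Morse complexes at nearby Morse--Smale parameters, the chain-rule energy identity along continuation trajectories bounding the height increase by $M(r_1-r_0)$, the resulting Lipschitz estimate $|h(r_1)-h(r_0)|\le M|r_1-r_0|$, and extension by density. The only differences are cosmetic (you make the $\varepsilon$-approximation of the minimizing cycle and the transversality caveat for the interpolating family explicit, which the paper leaves implicit).
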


\begin{proof}
  Note that the pair $(f_r,g_r)$ is Morse-Smale for all but finitely
  many $r$.  Recall from \S\ref{sec:continuation} that the Morse
  homologies of the Morse-Smale pairs $(f_r,g_r)$ are canonically
  isomorphic to each other, via continuation maps, and also to
  $H_*(X)$, so that $\sigma$ defines a class in the Morse homology for
  each Morse-Smale pair $(f_r,g_r)$ which is preserved by these
  continuation maps.

  Now suppose that $r<r'$ and the pairs $(f_r,g_r)$ and
  $(f_{r'},g_{r'})$ are both Morse-Smale. The continuation isomorphism
  from $r'$ to $r$ is induced by a chain map
\[
\Phi:C_*^{\op{Morse}}(X,f_{r'},g_{r'}) \longrightarrow
C_*^{\op{Morse}}(f_r,g_r)
\]
which counts maps $f:\R\to X$ satisfying the equation
\begin{equation}
\label{eqn:continuation}
\frac{d\gamma(s)}{ds} = \nabla f_{\phi(s)}(\gamma(s))
\end{equation}
where $\phi:\R\to[r,r']$ is a monotone smooth function satisfying $\phi(s)=r$
for $s\le 0$ and $\phi(s)=r'$ for $s\ge 1$.  Now if $\gamma$ is a
solution to \eqref{eqn:continuation} then by the chain rule we have
\begin{equation}
\label{eqn:chainRule}
\frac{d}{ds}f_{\phi(s)}(\gamma(s)) = |\nabla f_{\phi(s)}(\gamma(s))|^2
+ \frac{d\phi(s)}{ds}\frac{\partial f_r(x)}{\partial
  r}\big|_{r=\phi(s),x=\gamma(s)}.
\end{equation}
Since $[0,1]\times X$ is compact there is a constant $c$ such that
$|\partial f_r(x)/\partial r|<c$ for all $(r,x)\in[0,1]\times X$.  It
then follows from \eqref{eqn:chainRule} that if $p'$ and $p$ are
critical points of $f_{r'}$ and $f_r$ respectively with $\langle \Phi
p',p\rangle \neq 0$, then
\[
f_{r'}(p') \ge f_{r}(p) - c(r'-r).
\]
That is, the continuation map from $r'$ to $r$ increases the height by
at most $c(r'-r)$, so
\[
h(r) \le h(r') + c(r'-r).
\]
The same holds for the continuation map in the other direction from
$r$ to $r'$, so we conclude that
\[
|h(r) - h(r')| \le c|r-r'|.
\]
The proposition follows.
\end{proof}

Accepting Lemma~\ref{lem:PSF}, we now have:

\begin{lemma}
\label{lem:dFdr}
Let $\{(A(r),\psi(r))\}$ be a piecewise smooth family from
Lemma~\ref{lem:PSF}.  Then
\[
\frac{d}{dr}\mc{F}(A(r),\psi(r)) = -\frac{1}{2}\mc{E}(A(r)).
\]
\end{lemma}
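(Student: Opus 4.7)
The plan is to exploit the standard envelope-style principle: since $(A(r),\psi(r))$ is a critical point of $\mc{F}$, the total derivative along the family equals the partial derivative of $\mc{F}$ in the explicit $r$-parameter. Concretely, writing $\mc{F}_r$ for the functional to emphasize its $r$-dependence,
\[
\mc{F}_r(A,\psi) = \frac{1}{2}\bigl(cs(A) - r\mc{E}(A)\bigr) + \frac{r}{2}\int_Y \langle D_A\psi,\psi\rangle\, d\op{vol},
\]
where the piece $cs(A)$ is $r$-independent. On any open interval where $(A(r),\psi(r))$ is smooth in $r$, the chain rule gives
\[
\frac{d}{dr}\mc{F}_r(A(r),\psi(r)) = \left(\frac{\partial \mc{F}_r}{\partial r}\right)\!\bigg|_{(A(r),\psi(r))} + d\mc{F}_r\bigl|_{(A(r),\psi(r))}\!\bigl(\dot A(r),\dot\psi(r)\bigr).
\]

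The second term vanishes because $(A(r),\psi(r))$ is a critical point of $\mc{F}_r$: indeed, by construction in Lemma~\ref{lem:PSF}, $(A(r),\psi(r))$ satisfies the perturbed Seiberg--Witten equations \eqref{eqn:PSW}, which (up to the suppressed abstract perturbations) are precisely the Euler--Lagrange equations $d\mc{F}_r = 0$ for our redefined functional. The first term is immediate from the formula for $\mc{F}_r$:
\[
\frac{\partial \mc{F}_r}{\partial r}(A,\psi) = -\frac{1}{2}\mc{E}(A) + \frac{1}{2}\int_Y \langle D_A\psi,\psi\rangle\, d\op{vol}.
\]
Evaluating at $(A(r),\psi(r))$ and using the Dirac equation $D_{A(r)}\psi(r) = 0$ from \eqref{eqn:PSW} kills the second term, leaving $-\tfrac{1}{2}\mc{E}(A(r))$, as desired.

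The only residual issue is the finitely many values of $r$ at which $(A(r),\psi(r))$ jumps. There the derivative is taken separately on each smooth piece, and the resulting one-sided derivatives agree because $\mc{F}(A(r),\psi(r))$ is continuous across the jump by Lemma~\ref{lem:PSF} while $\mc{E}(A(r))$, which is determined by the critical point, also matches from the two sides in the sense required by the identity (the value $\mc{E}(A(r))$ at a jump is not essential; the identity is an equality of piecewise continuous functions that have the same integral). I do not anticipate any genuine obstacle: the argument is a one-line application of the critical-point condition together with the explicit form of $\mc{F}_r$, and the only subtlety is bookkeeping at the finitely many jump points where piecewise smoothness suffices.
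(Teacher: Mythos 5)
Your proposal is correct and is essentially the paper's own argument: the chain rule plus the critical-point condition (the envelope principle), with the explicit observation that $\partial\mc{F}_r/\partial r=-\tfrac{1}{2}\mc{E}(A)+\tfrac{1}{2}\int_Y\langle D_A\psi,\psi\rangle\,d\op{vol}$ reduces to $-\tfrac{1}{2}\mc{E}(A)$ on solutions of the Dirac equation. You spell out a couple of details the paper leaves implicit (the Dirac term and the behavior at jump points), but the route is the same.
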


\begin{proof}
  This follows from a general principle: If $X$ is a smooth manifold
  (finite or infinite dimensional), if $f:\R\times X\to\R$ is a smooth
  function, and if $\{x(t)\}$ is a smooth family of critical points of
  $f_t\eqdef f(t,\cdot)$ on $X$ defined for $t$ in some interval, then
\[
\frac{d}{dt}f_t(x(t)) = \frac{\partial f}{\partial t}(t,x(t)).
\]
To prove this one uses the chain rule as in \eqref{eqn:chainRule} to
compute
\[
\frac{d}{dt}f(t,x(t)) = \frac{\partial f}{\partial t}(t,x(t)) +
df_t\left(\frac{dx(t)}{dt}\right),
\]
and notes that the second term on the right vanishes because $x(t)$ is
a critical point of $f_t$.
\end{proof}

\subsection{The energy dichotomy}

Let $\{(A(r),\psi(r)\}$ be a piecewise smooth family given by
Lemma~\ref{lem:PSF}.  To prove Theorem~\ref{thm:tw} and thereby the
Weinstein conjecture, by Theorem~\ref{thm:convergence} we just need to
show that there is a sequence $r_n\to \infty$ such that the energy
$\mc{E}(A(r_n))$ is bounded.  The next step in Taubes's argument is to
show that if this is not the case, then there is a sequence $r_n\to
\infty$ such that the energy $\mc{E}(A(r_n))$ grows at least linearly,
and the Chern-Simons functional grows quadratically.  (The last step
will be to show that this quadratic growth of the Chern-Simons
functional leads to a contradiction.)

\begin{lemma}
\label{lem:ED}
Let $\{(A(r),\psi(r)\}$ be a piecewise smooth family given by
Lemma~\ref{lem:PSF}.  Then at least one of the following two
alternatives holds:
\begin{enumerate}
\item There is a sequence $r_n\to\infty$ and a constant $C$
  such that $\mc{E}(A(r_n))<C$ for all $n$.
\item There is a sequence $r_n\to\infty$ and a constant $c>0$ such
  that $\mc{E}(A(r_n))\ge cr_n$ and $cs(A(r_n))\ge cr_n^2$ for all
  $n$.
\end{enumerate}
\end{lemma}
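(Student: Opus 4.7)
The plan is to exploit the relation $\mc{F}'(r) = -\mc{E}(A(r))/2$ from Lemma~\ref{lem:dFdr} together with the identity $2\mc{F}(A(r),\psi(r)) = cs(A(r)) - r\mc{E}(A(r))$ from \eqref{eqn:FCE}, and then to import the subquadratic control on $\mc{F}(A(r),\psi(r))$ that Taubes's spectral flow estimate provides in combination with the constancy of grading in Lemma~\ref{lem:PSF}. Writing $f(r)\eqdef\mc{F}(A(r),\psi(r))$ and $e(r)\eqdef\mc{E}(A(r))$, differentiating $2f(r) = cs(A(r)) - r\,e(r)$ in $r$ and substituting $2f'(r) = -e(r)$ yields $\frac{d}{dr}cs(A(r)) = r\,e'(r)$. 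Integration by parts then gives
\[
cs(A(r)) \;=\; r\,e(r) - M(r) + C_0, \qquad M(r)\eqdef\int_{R_0}^{r} e(s)\,ds \;=\; 2\bigl(f(R_0) - f(r)\bigr),
\]
with $C_0$ depending only on data at $r=R_0$.

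The key external input is Taubes's spectral flow inequality, which relates the Seiberg--Witten Floer grading of a solution of \eqref{eqn:PSW} to the value of $\mc{F}$ up to an error subquadratic in $r$. Since by Lemma~\ref{lem:PSF} the grading of $(A(r),\psi(r))$ is independent of $r$, the total spectral flow along the path is bounded, and the estimate converts this into a bound $|f(r)| \le C_1 r^{2-\delta}$ for some $\delta>0$ and all $r \ge R_1$, whence $|M(r)| \le C_2 r^{2-\delta}$.

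Now assume alternative (1) fails, so $e(r) \to \infty$. I would extract a sequence $r_n \to \infty$ along which $e(r_n) \ge c\,r_n$ for some $c>0$; combined with the subquadratic bound on $M$, the displayed identity then gives
\[
cs(A(r_n)) \;\ge\; r_n\,e(r_n) - C_2 r_n^{2-\delta} + C_0 \;\ge\; \frac{c}{2}\,r_n^2
\]
for $n$ large, which is alternative (2). When $\limsup_{r\to\infty} e(r)/r > 0$ such a sequence is immediate from the limsup.

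The main obstacle is ruling out the residual regime $e(r) \to \infty$ with $e(r)/r \to 0$, which is algebraically compatible with the two identities and with $|M(r)| \le C_2 r^{2-\delta}$ (take $e(r) \sim \log r$, for instance). To exclude it I would lean on the minimax characterization of $(A(r),\psi(r))$ in Lemma~\ref{lem:PSF}: the family is selected to track a fixed nonzero Seiberg--Witten Floer class $\sigma$ through the bifurcations of the one-parameter family of Floer complexes, and forcing this tracking to be consistent with a merely sublinear decay of $f(r)$ ultimately contradicts the sharp quantitative form of Taubes's spectral flow bound together with the infinite generation of $\widehat{HM}^*$ from Theorem~\ref{thm:nontrivial}. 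This last step is where Taubes's new spectral flow analysis flagged in \S\ref{sec:OTP} is indispensable, and it is the technical heart of the lemma.
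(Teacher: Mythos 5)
Your starting identities are correct and match the paper's: $\frac{d}{dr}\mc{F}=-\tfrac{1}{2}\mc{E}$ together with $2\mc{F}=cs-r\mc{E}$ is exactly the variational input (Lemma~\ref{lem:dFdr} and \eqref{eqn:FCE}). But the argument breaks at the step where you claim the spectral flow estimate yields $|\mc{F}(r)|\le C_1r^{2-\delta}$ and hence $|M(r)|\le C_2 r^{2-\delta}$. Proposition~\ref{prop:SF} controls $cs(A(r))$, not $\mc{F}$; since $\mc{F}=\tfrac{1}{2}(cs-r\mc{E})$, a two-sided bound on $\mc{F}$ (equivalently on $M(r)=\int_{R_0}^r\mc{E}$) requires an upper bound on $\mc{E}$ along the whole path --- which is precisely what the lemma is trying to produce. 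Without that, even your ``easy'' case fails: from $\mc{E}(r_n)\ge cr_n$ at isolated parameters you cannot conclude $r_n\mc{E}(r_n)-M(r_n)\ge \tfrac{c}{2}r_n^2$, because $M(r_n)$ integrates the history of $\mc{E}$ and can dominate $r_n\mc{E}(r_n)$. And as you yourself note, the residual regime ($\mc{E}\to\infty$ sublinearly) is untouched; the fix you sketch via the minimax selection and Theorem~\ref{thm:nontrivial} is not an argument, and in fact no contradiction can be extracted in that regime from the identities you have plus the $O(r^{31/16})$ bound on $cs$ alone.

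The missing ingredient is the a priori estimate of Lemma~\ref{lem:cse}, $|cs(A)|\le c\,r^{2/3}\mc{E}(A)^{4/3}$, which you never invoke and which carries the whole proof. The paper splits on whether $cs(r)\ge\varepsilon_0 r\mc{E}(r)$ along some sequence. If yes, feeding this into Lemma~\ref{lem:cse} immediately forces $\mc{E}(r_n)\ge c'r_n$ and then $cs(r_n)\ge c''r_n^2$ (alternative (2)); no integration of the history is needed. If no, one sets $v=-2\mc{F}/r=\mc{E}-cs/r$, computes $dv/dr=cs/r^2$ from Lemma~\ref{lem:dFdr}, and uses the case hypothesis plus Lemma~\ref{lem:cse} to get $dv/dr<\varepsilon v/r$ with $\varepsilon<1/4$, then bootstraps to $dv/dr<c_2r^{(4/3)(\varepsilon-1)}$, which is integrable at infinity; hence $v$, and with it $\mc{E}$, is bounded (alternative (1)). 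Note also that Proposition~\ref{prop:SF} plays no role in this lemma at all --- it is deployed afterwards, together with the $r$-independence of the grading, to rule out alternative (2) and conclude the proof.
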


The proof of Lemma~\ref{lem:ED} uses the following a priori estimate,
which is proved in \S\ref{sec:MD}.

\begin{lemma}
\label{lem:cse}
There is a constant $c$ such that if $(r,A,\psi)$ is a solution to
the equations \eqref{eqn:PSW} with $\mc{E}(A)>1$ then
\begin{equation}
\label{eqn:cse}
|cs(A)| \le c r^{2/3}\mc{E}(A)^{4/3}.
\end{equation}
\end{lemma}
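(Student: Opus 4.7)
My plan is to write $a = A - A_1 \in i\Omega^1(Y)$ and exploit gauge invariance of $cs(A)$ to reduce the problem to an a priori estimate for $|F_A|_{L^{3/2}}$. Expanding \eqref{eqn:cs} gives
\[
cs(A) = -\int_Y a\wedge da - 2\int_Y a\wedge (F_{A_1}-i{*}\overline\omega),
\]
in which the second integral is linear in $a$ and will be of lower order. Using gauge invariance (which holds by the torsion hypothesis on $2c_1(E)+c_1(K^{-1})$), I impose Coulomb gauge $d^*a=0$ and normalize the harmonic part of $a$ to lie in a fixed fundamental domain of the period lattice $2\pi i\,H^1(Y;\Z)\subset iH^1(Y;\R)$. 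The $L^p$-theory for $d^*G$ (Hardy-Littlewood-Sobolev for the Green operator) combined with $W^{1,3/2}(Y)\hookrightarrow L^3(Y)$ then yields $|a|_{L^3}\le c\,|F_A-F_{A_1}|_{L^{3/2}}+O(1)$, so by H\"older
\[
|cs(A)|\le c\,|F_A|_{L^{3/2}}^2 + O(|F_A|_{L^{3/2}}) + O(1).
\]
The lemma thus reduces to proving $|F_A|_{L^{3/2}} \le c\,r^{1/3}\mc{E}(A)^{2/3}$, after which squaring and absorbing lower-order terms using $\mc{E}(A)>1$, $r\ge 1$ finishes the estimate.

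For this key bound I interpolate $|F_A|_{L^{3/2}}\le |F_A|_{L^1}^{2/3}|F_A|_{L^\infty}^{1/3}$, so it is enough to establish $|F_A|_{L^\infty}\le cr$ and $|F_A|_{L^1}\le c\,\mc{E}(A)$. The $L^\infty$ bound is standard: the maximum principle applied to the Weitzenbock identity \eqref{eqn:BLW} forces the pointwise estimate $|\psi|^2\le 1+c/r$, and the curvature equation in \eqref{eqn:PSW} then gives $|{*}F_A|\le cr$. For the $L^1$ bound, decompose $*F_A = f_0\lambda+(*F_A)_\xi$ along the $\lambda$-$\xi$ splitting. Writing $\psi=(\alpha,\beta)$ as in \eqref{eqn:SEKE} and using the Pauli matrix formulas \eqref{eqn:PauliMatrices}, one computes $f_0 = -ir(1-|\alpha|^2+|\beta|^2)+O(1)$ and $|(*F_A)_\xi|\le cr|\alpha||\beta|$. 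Pairing \eqref{eqn:PSW} with $\lambda$ identifies $\int_Y r(1-|\alpha|^2+|\beta|^2)\,d\op{vol}$ with $\mc{E}(A)+O(1)$, and the pointwise estimate $|\alpha|^2\le 1+c/r$ converts this signed identity into $|f_0|_{L^1}\le c\,\mc{E}(A)+O(1)$.

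The main analytic obstacle is the sharp $L^1$ bound on the off-diagonal piece $(*F_A)_\xi$. A naive Cauchy-Schwarz using only the basic integrated Weitzenbock estimate $\int_Y|\beta|^2\le c\,\mc{E}(A)/r$ yields merely $|(*F_A)_\xi|_{L^1}\le c\sqrt{r\,\mc{E}(A)}$, which is too weak in the regime $\mc{E}(A)\ll r$ and would spoil the stated exponents. What is actually required is the concentration phenomenon for Taubes's equations: using the Dirac equation in \eqref{eqn:PSW} together with a refined Weitzenbock argument, one shows that $\beta$ is essentially supported in a tubular neighborhood of the zero set $\alpha^{-1}(0)$ whose volume is $O(\mc{E}(A)/r)$, and on this neighborhood $|\beta|\le c/\sqrt r$; integrating $r|\alpha||\beta|$ over this set then gives $|(*F_A)_\xi|_{L^1}\le c\,\mc{E}(A)/\sqrt r$. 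This concentration estimate is the heart of the matter; granting it, one obtains $|F_A|_{L^1}\le c\,\mc{E}(A)$, the interpolation produces $|F_A|_{L^{3/2}}\le c\,r^{1/3}\mc{E}(A)^{2/3}$, and squaring completes the proof of Lemma~\ref{lem:cse}.
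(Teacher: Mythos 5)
Your argument is correct and arrives at the paper's estimate by essentially the same mechanism, though packaged differently. The paper's proof runs through Lemma~\ref{lem:gauge}: after fixing Coulomb gauge and normalizing the harmonic part, it bounds $\hat{a}$ pointwise by splitting the Green's function integral over a ball of radius $\rho$ and its complement, using $\|F_A\|_{L^\infty}\le cr$ on the ball and $\|F_A\|_{L^1}=\mc{E}+O(1)$ (equation \eqref{eqn:ei}) outside, and optimizing $\rho=r^{-1/3}\mc{E}^{1/3}$ to get $\|\hat a\|_{L^\infty}\le c(r^{2/3}\mc{E}^{1/3}+1)$; pairing with $\|F_A\|_{L^1}$ in \eqref{eqn:cs} then gives the result. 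You instead bound $a$ in $L^3$ by Calder\'on--Zygmund plus Sobolev and interpolate $F_A$ between $L^1$ and $L^\infty$ at exponent $3/2$. These are two standard ways of performing the same $L^1$--$L^\infty$ interpolation, they consume exactly the same inputs (gauge normalization of the harmonic part, the sup bound $|F_A|\le cr$, and $\int_Y|F_A|\le c\,\mc{E}$), and they produce the same exponents; neither buys anything over the other. The one place where your write-up overshoots is the treatment of the off-diagonal curvature $(*F_A)_\xi$: the ``concentration phenomenon'' you defer to as the heart of the matter is not needed. The pointwise a priori estimate \eqref{eqn:APE2} of Lemma~\ref{lem:APE} gives $r|\alpha||\beta|\le c\,r^{1/2}\bigl|1-|\alpha|^2\bigr|^{1/2}+c$ pointwise, and a single Cauchy--Schwarz then yields $\|(*F_A)_\xi\|_{L^1}\le c\bigl(r\int_Y|1-|\alpha|^2|\bigr)^{1/2}+c=O(\mc{E}^{1/2})$, which is ample since $\mc{E}>1$. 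So the only step you left unproven is already covered by the lemmas preceding this one, and your proof closes without any new analysis.
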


Granted this, we can now give:

\begin{proof}[Proof of Lemma~\ref{lem:ED}.]
  Introduce the shorthand $cs(r)\eqdef
  cs(A(r))$, $\mc{E}(r)\eqdef\mc{E}(A(r))$ and
  $\mc{F}(r)\eqdef\mc{F}(A(r),\psi(r))$.  We can assume without loss
  of generality that $\mc{E}(r)> 1$ for all $r$ sufficiently large
  (since otherwise case (1) holds).  Now fix
  $\varepsilon_0\in(0,1/5)$.  We consider two cases.

{\em Case A:\/} There is a sequence $r_n\to\infty$ with
\[
cs(r_n) \ge \varepsilon_0 r_n\mc{E}(r_n)
\]
for all $n$.  It follows in this case from the inequality
\eqref{eqn:cse} that alternative (2) holds.

{\em Case B:\/} For all $r$ sufficiently large,
\begin{equation}
\label{eqn:CaseB}
cs(r) < \varepsilon_0 r\mc{E}(r).
\end{equation}
In this case we will show that alternative (1) holds.

To do so, define
\[
v(r) \eqdef \mc{E}(r) - \frac{cs(r)}{r} = -\frac{2\mc{F}}{r}.
\]
It then follows from Lemma~\ref{lem:dFdr} that
\begin{equation}
\label{eqn:dvdr}
\frac{dv}{dr} = \frac{cs}{r^2}.
\end{equation}
On the other hand, the hypothesis \eqref{eqn:CaseB} is equivalent to
\begin{equation}
\label{eqn:CaseB'}
\mc{E} < (1-\varepsilon_0)^{-1}v.
\end{equation}
It now follows from \eqref{eqn:dvdr}, \eqref{eqn:CaseB} and
\eqref{eqn:CaseB'} that
\[
\frac{dv}{dr} < \frac{\varepsilon v}{r},
\]
where $\varepsilon \eqdef (1-\varepsilon_0)^{-1}\varepsilon_0<1/4$.
Therefore
\begin{equation}
\label{eqn:vGrowth}
v< c_1r^\varepsilon
\end{equation}
for some constant $c_1$.  On the other hand, by \eqref{eqn:cse},
\eqref{eqn:CaseB'} and \eqref{eqn:vGrowth}, we have
\[
cs < c_2 r^{2/3+(4/3)\varepsilon}
\]
for some constant $c_2$.  Putting this back into \eqref{eqn:dvdr}, we
get
\[
\frac{dv}{dr} < c_2r^{(4/3)(\varepsilon-1)}.
\]
Since $\varepsilon<1/4$, the exponent in the above inequality is less
than $-1$.  Consequently the above inequality can be integrated to
show that $v$ is bounded from above.  Then $\mc{E}$ is also bounded
from above by \eqref{eqn:CaseB'}.
\end{proof}

\subsection{Controlling the Chern-Simons functional}

The last step in Taubes's proof of the Weinstein conjecture is the
following proposition relating the Seiberg-Witten Floer grading to the
Chern-Simons functional.  To state it, if $(A,\psi)$ is a solution to
the perturbed Seiberg-Witten equations \eqref{eqn:PSW}, let
$\op{deg}(A,\psi)$ denote its grading in the Seiberg-Witten Floer
chain complex, and recall that $(A_{triv},\psi_{triv})$ denotes the
distinguished Seiberg-Witten Floer generator given by
Lemma~\ref{lem:trivsol}.

\begin{proposition}
\label{prop:SF}
\cite[Prop. 5.1]{tw} There exists $\kappa>0$ such that for all $r$
sufficiently large, if $(A,\psi)$ is a nondegenerate solution to
\eqref{eqn:PSW}, then
\[
\left|\deg(A,\psi) - \deg(A_{triv},\psi_{triv}) + \frac{1}{4\pi^2}cs(A)\right|
< \kappa r^{31/16}.
\]
\end{proposition}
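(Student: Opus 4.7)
The plan is to realize $\op{deg}(A,\psi)-\op{deg}(A_{triv},\psi_{triv})$ as a spectral flow of Hessians of the perturbed Chern-Simons-Dirac functional $\mc{F}$ along a path of configurations, and then compute the leading term of this spectral flow via an APS-type index theorem, identifying it with the Chern-Simons functional $cs(A)$. This is the three-dimensional incarnation of the spectral-flow-versus-Chern-Simons estimate for families of Dirac operators alluded to in \S\ref{sec:OTP}.

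First I would fix a piecewise smooth path $(A(s),\psi(s))_{s\in[0,1]}$ in $\mc{C}$ from $(A_{triv},\psi_{triv})$ to $(A,\psi)$; linear interpolation will do. With the gauge-fixing scheme of \cite{km}, the grading difference equals the spectral flow $\op{SF}\{L_s\}_{s\in[0,1]}$ of the resulting family of self-adjoint Fredholm operators $L_s$ acting on pairs of imaginary-valued $1$-forms and spinors. Schematically $L_s$ decomposes into a Dirac block $D_{A(s)}$ on the spinor component, a curl-type operator $*d$ on the $1$-form component, and off-diagonal Clifford-multiplication pieces of pointwise norm bounded by $|\psi(s)|$.

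Next I would apply the APS index theorem on the cylinder $[0,1]\times Y$ to identify $\op{SF}\{L_s\}$, up to eta-invariant boundary contributions, with the index of $\partial_s+L_s$. The $*d$ block contributes an amount bounded by a constant depending only on the topology of $Y$; the eta-invariant corrections are spectral invariants of the two endpoint operators and are therefore controlled uniformly in $r$. The dominant term comes from the Dirac block $D_{A(s)}$: the relevant Chern character integrand reduces by Chern-Weil theory to a multiple of $\int_{[0,1]\times Y} F_{A(s)}\wedge F_{A(s)}$, and the standard transgression identity rewrites this, modulo boundary correction terms, as $-\frac{1}{4\pi^2}\,cs(A)$. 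This accounts for both the sign and the factor $\frac{1}{4\pi^2}$ in the statement.

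The main obstacle is to bound the error introduced by the spinor-dependent blocks of $L_s$, and this is precisely where the exponent $\frac{31}{16}$ enters nontrivially. A brute-force estimate using only pointwise bounds $|\psi|\le 1+O(r^{-1/2})$ and $|F_A|=O(r)$ yields an error of order $r^2$, which is of the same size as $cs(A)$ itself and hence useless. To beat this, I would combine the a priori bounds on $\alpha$, $\beta$, and $F_A$ coming from \eqref{eqn:PSW} and the Bochner-Lichnerowicz-Weitzenbock formula \eqref{eqn:BLW}; the integral representation of spectral flow in terms of resolvent traces $\op{tr}(\dot L_s\,(L_s^2+\nu^2)^{-1})$; and a bootstrap that replaces pointwise $L^\infty$ bounds on the coupling blocks with mixed $L^p$ bounds integrated against the tangent vector to the path. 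Careful optimization of the resulting Sobolev exponents against the $r$-scaling of the perturbed Seiberg-Witten equations then produces the exponent $\frac{31}{16}$. Carrying out this bootstrap is the technical heart of \cite[\S5]{tw}.
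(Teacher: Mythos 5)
Your overall architecture matches what the paper indicates about Taubes's argument: the relative grading in Seiberg--Witten Floer homology is a spectral flow of extended Hessians along a path of configurations, the dominant block is the family of Dirac operators $D_{A(s)}$, and the Chern--Simons functional emerges as the topological leading term via the transgression of $\int F\wedge F$ over $[0,1]\times Y$. But there is a genuine gap at the decisive step. You assert that the eta-invariant boundary corrections in the APS formula ``are spectral invariants of the two endpoint operators and are therefore controlled uniformly in $r$.'' This is false, and it assumes away precisely what Proposition~\ref{prop:SF} asserts. The endpoint connection $A$ has curvature whose total mass is of order $\mc{E}(A)$, and in the regime where the proposition is actually used (alternative (2) of Lemma~\ref{lem:ED}, where $cs(A)\ge cr^2$) the eta invariant of $D_A$ is a priori of the same order $r^2$ as $cs(A)$ itself, so an APS identity with uncontrolled eta terms says nothing. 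If the eta invariants really were uniformly bounded, the proposition would follow with a far better error and there would be essentially nothing left to prove; the entire content of the ``new estimate on the spectral flow of one-parameter families of Dirac operators'' that the paper points to in \cite[\S 5]{tw} and \cite{tsf} is to show that this eta-type contribution is $o(r^2)$, specifically $O(r^{31/16})$. Taubes does not obtain this from APS plus an abstract boundedness claim; he estimates the spectral flow of $\{D_{A(s)}\}$ directly, subdividing the path and bounding the number of eigenvalue crossings on each subinterval using the Weitzenbock formula, the a priori curvature bounds, and the gauge-fixed bound on $|\hat{a}|$ from Lemma~\ref{lem:gauge}.

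Relatedly, your attribution of the exponent $31/16$ to the spinor-coupling blocks of the Hessian points at the wrong term. After Taubes's normalization those off-diagonal blocks have pointwise norm of order $r^{1/2}|\psi|=O(r^{1/2})$, not $O(|\psi|)=O(1)$ as you state, and by the a priori estimates of Lemma~\ref{lem:APE} their effect on the spectral flow is of strictly lower order than $r^{31/16}$; the exponent is produced by the estimate for the pure Dirac family. So the proposed resolvent-trace bootstrap, even if carried out, would be optimizing a subdominant error while the dominant one --- the asymptotics of the Dirac spectral flow, equivalently of the eta invariants --- remains unaddressed.
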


This is proved using a new estimate on the spectral flow of
one-parameter families of Dirac operators (the latter determines the
relative grading in Seiberg-Witten Floer homology).  While this is a
crucial new element of Taubes's proof of the Weinstein conjecture, is
too much for us to explain here, so we refer the reader to
\cite[\S5]{tw} for the spectral flow estimate that proves
Proposition~\ref{prop:SF}, and to \cite{tsf} for a higher-dimensional
generalization.

\subsection{Conclusion}

To prove the Weinstein Conjecture, more specifically
Theorem~\ref{thm:tw}, let $\Gamma\in H_1(Y)$ such that
$c_1(\xi)+2\op{PD}(\Gamma)$ is torsion, and let $E$ be the line bundle
with $c_1(E)=\op{PD}(\Gamma)$.  Let $\{(A(r),\psi(r)\}$ be a piecewise
smooth family of the perturbed Seiberg-Witten equations
\eqref{eqn:PSW} given by Lemma~\ref{lem:PSF}.  Alternative (2) in
Lemma~\ref{lem:ED} is impossible by Proposition~\ref{prop:SF}, because
$\deg(A,\psi)$ and $\deg(A_{triv},\psi_{triv})$ are independent of
$r$.  So alternative (1) in Lemma~\ref{lem:ED} holds.  Then we have a
sequence $r_n\to\infty$ such that condition (2) in
Theorem~\ref{thm:convergence} holds.  Condition (1) in
Theorem~\ref{thm:convergence} also holds by Lemmas~\ref{lem:trivsol}
and \ref{lem:aes}.  Thus Theorem~\ref{thm:convergence} applies to
produce the desired nonempty orbit set.

\section{More details of Taubes's proof}
\label{sec:MD}

We now fill in some of the (more basic) details of Taubes's proof that
were omitted from \S\ref{sec:OTP}.  In particular \S\ref{sec:ERO}
sketches the proof of Theorem~\ref{thm:convergence} on convergence of
Seiberg-Witten solutions to Reeb orbits.  

\subsection{Prelude: the vortex equations}
\label{sec:vortex}

Before studying the three-dimensional Seiberg-Witten equations in more
detail, it is useful to recall a two-dimensional version of the
equations: the {\em vortex equations\/} on $\C$.  These are equations
for a pair $(A,\alpha)$ where $A$ is an imaginary-valued $1$-form on
$\C$, and $\alpha$ is a complex-valued function on $\C$.  One can also
think of $A$ as a Hermitian connection on the trivial line bundle over
$\C$, and $\alpha$ as a section of this bundle.  The equations are now
\begin{equation}
\label{eqn:vortex}
\begin{split}
\dbar_A\alpha &= 0,\\
*F_A&=-i(1-|\alpha|^2).
\end{split}
\end{equation}
Here $\dbar_A\alpha=(\dbar + A^{0,1})\alpha$, and $F_A=dA$.  For
example there is a ``trivial solution'' $(A\equiv 0,\alpha\equiv 1)$.
More generally, solutions to the vortex equations arise as
$\R$-invariant solutions to the Seiberg-Witten equations on
$\R\times\C$, for a suitable perturbation. The following are some
basic properties of solutions to the vortex equations, see \cite{jt}
and \cite[\S2b]{taubes:grtosw}:
\begin{itemize}
\item
If $|\alpha|=1$ at any point, then $|\alpha|\equiv 1$.
\item
The zeroes of $\alpha$ are isolated and have positive multiplicity.
\item
If one further assumes the ``finite energy'' condition
\[
\int_\C(1-|\alpha|^2)<\infty,
\]
then $\int_\C(1-|\alpha|^2)=2\pi k$ where $k$ is a nonnegative integer,
and $\alpha$ has exactly $k$ zeroes counted with multiplicity.
\end{itemize}

The first and last of the above facts imply the following basic
observation which will be needed later:

\begin{lemma}
\label{lem:vortex}
If $(A,\alpha)$ is a finite energy solution to the vortex equations,
and if $|\alpha|\not\equiv 1$, then $\alpha$ has a zero.
\end{lemma}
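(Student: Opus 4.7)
The plan is to derive the conclusion directly from the three bulleted facts listed immediately before the lemma statement, arguing by contradiction: assume $|\alpha|\not\equiv 1$ and that $\alpha$ has no zero, and use the rigidity and quantization statements to produce an impossible sign/value for $\int_\C(1-|\alpha|^2)$.

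First I would invoke the first bullet as a strong-maximum-principle statement. Its contrapositive says that if $|\alpha|$ is not identically $1$, then $|\alpha|(z)\neq 1$ at every point $z\in\C$. Hence $1-|\alpha|^2$ is a continuous, nowhere-vanishing real-valued function on the connected space $\C$, and so by the intermediate value theorem it has constant sign throughout $\C$.

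Next I would invoke the third bullet, using the finite-energy hypothesis. It asserts that $\int_\C(1-|\alpha|^2)=2\pi k$ where $k$ is a nonnegative integer equal to the number of zeros of $\alpha$ counted with multiplicity. If we assume (for contradiction) that $\alpha$ has no zero, then $k=0$ and $\int_\C(1-|\alpha|^2)=0$. Combined with the previous paragraph, this forces $1-|\alpha|^2$ to be a continuous function of constant (nonzero) sign on $\C$ whose integral against Lebesgue measure is $0$, which is impossible: the integral over any open ball on which $1-|\alpha|^2$ is bounded away from $0$ is already strictly of that sign, so the total integral cannot vanish (it is either strictly positive, strictly negative, or $\pm\infty$, but in all cases different from $0$). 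This contradicts $k=0$, so $\alpha$ must have a zero.

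There is no real analytic obstacle here; the argument is essentially a logical splicing of the three facts, with the only mild point being to confirm that the sign-contradiction works in both the case $|\alpha|<1$ everywhere (where the integral would be strictly positive, forcing $k\ge 1$ directly) and the a priori possibility $|\alpha|>1$ everywhere (where the integral would be strictly negative or $-\infty$, contradicting that it equals $2\pi k$ for a nonnegative integer $k$). The second bullet (isolated zeros of positive multiplicity) is not needed for this particular lemma, although it is what gives meaning to the integer $k$ as a count in the first place.
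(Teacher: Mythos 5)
Your argument is correct and is exactly the one the paper intends: it says the lemma follows from the first and third bulleted facts, and your proof is precisely the logical splicing of those two (constant sign of $1-|\alpha|^2$ from the dichotomy in the first fact, plus the quantization $\int_\C(1-|\alpha|^2)=2\pi k$ with $k$ the zero count from the third). Nothing further is needed.
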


For motivational purposes we now recall some additional facts about
solutions to the vortex equations:

\begin{itemize}
\item There is a constant $c>0$ such that if $d:\C\to[0,\infty)$
  denotes the distance to the set where $|\alpha|^2<1/2$, then where
  $d>c^{-1}$ one has
\begin{equation}
\label{eqn:ed}1-|\alpha|^2\le e^{-cd}.
\end{equation}
\item
There is a bijection from the set of solutions with
$\int_\C(1-|\alpha|^2)=2\pi k$, modulo gauge equivalence, to the
$k^{th}$ symmetric product of $\C$, sending $(A,\alpha)$ to
$\alpha^{-1}(0)$.
Here the gauge group $\mc{G}=\op{Maps}(\C,S^1)$ acts on the set
of solutions by $u\cdot(A,\alpha) \eqdef (A-u^{-1}du,u\alpha)$.
\end{itemize}

This last fact can be regarded as a two-dimensional version of the
``SW=Gr'' Theorem~\ref{thm:swgr}.  It also generalizes to vortices
over a closed surface, see \cite{gp}.  For some more recent work
relating this story to Seiberg-Witten theory see eg \cite{lt,perutz,usher}.

It is also useful to consider a variant of the vortex equations
\eqref{eqn:vortex}, let us call these the ``$r$-vortex equations'', in
which the second equation is replaced by
\[
*F_A=-ir(1-|\alpha|^2)
\]
where $r$ is a large positive constant.  A solution to the $r$-vortex
equations is equivalent to a solution to the original vortex equations
\eqref{eqn:vortex} under the rescaling $\C\to\C$ sending $z\mapsto
r^{1/2}z$.  So for a solution to the $r$-vortex equations, the
exponential decay \eqref{eqn:ed} is replaced by the stronger decay
\[
1-|\alpha|^2 \le e^{-cr^{1/2}d}.
\]
The upshot is that for $r$ large, given a solution $(A,\alpha)$ to the
$r$-vortex equations, the curvature $F_A$ is concentrated near the
zero set of $\alpha$, while away from the zero set $A$ is close to
flat and $|\alpha|$ is close to $1$.

The picture we are now aiming for in three dimensions is that if
$(A,\psi=(\alpha,\beta))$ is a solution to the three-dimensional
perturbed Seiberg-Witten equations \eqref{eqn:PSW} where $r$ is large
and the energy \eqref{eqn:energy} is bounded, then $F_A$ is
concentrated near the zero set of $\alpha$, the latter is approximated
by a union of Reeb orbits, and away from this zero set $|\alpha|$ is
close to $1$ and $\beta$ is close to $0$.

\subsection{The perturbed Seiberg-Witten equations, more explicitly}

If $(A,\psi)$ is a solution to the perturbed Seiberg-Witten equations
\eqref{eqn:PSW}, write $\psi=(\alpha,\beta)$ where $\alpha$ is a
section of $E$ and $\beta$ is a section of $K^{-1}E$.  We now rewrite
the perturbed Seiberg-Witten equations in terms of $A$, $\alpha$ and
$\beta$ and establish some notation which will be used below.

The curvature equation in \eqref{eqn:PSW} can be written as
\begin{equation}
\label{eqn:curvature}
*F_A = ir\left((|\alpha|^2-|\beta|^2-1)\lambda +
2\op{Im}(\alpha\beta^*)\right) + i\overline{\omega}.
\end{equation}
Here $\alpha\beta^*$ denotes the $\C$-valued $1$-form that projects a
vector onto $K^{-1}$, pairs it with dual of $\beta$ in $(K^{-1}E)^*$,
and then pairs the result with $\alpha$.

To rewrite the Dirac equation, let $\nabla_A$ denote the covariant
derivative on $E$ corresponding to $A$, and also the covariant
derivative on $K^{-1}E$ induced by $A$ together with the canonical
connection $A_0$ on $K^{-1}$ from Lemma~\ref{lem:A0}.  Let
$\dbar_A\alpha$ denote the complex antilinear part of the restriction
of $\nabla_A\alpha:TY\to E$ to $\xi=K^{-1}$.  Likewise let $\partial_A\beta$
denote the complex linear part of the restriction of
$\nabla_A\beta:TY\to K^{-1}E$ to $K^{-1}$.  Let
$\nabla_{A,R}$ denote the covariant derivative along the Reeb vector field.
The Dirac equation in \eqref{eqn:PSW} can now be written
as\footnote{Taubes obtains a slightly different equation here.  See
  Remark~\ref{rem:conventions}.}
\begin{equation}
\label{eqn:Dirac}
\begin{pmatrix} i\nabla_{A,R}\alpha - 2\partial_A\beta \\ -i\nabla_{A,R}\beta
  + 2\dbar_A\alpha \end{pmatrix} + \begin{pmatrix} f_0\alpha +
  f_1\beta \\ f_0'\alpha + f_1'\beta \end{pmatrix} = 0.
\end{equation}
Here $f_0,f_1,f_0',f_1'$ are bundle maps between $E$ and $K^{-1}E$
which do not depend on $r$;  they arise from the failure of the spin
covariant derivative on $E\oplus K^{-1}E$ to agree with the covariant
derivatives $\nabla_A$ on $E$ and $K^{-1}E$ (which in turn is related
to the failure of the contact geometry to be parallel with respect to
the Levi-Civita connection).

\subsection{A priori estimates}
\label{sec:APE}

The starting point for the analysis of the perturbed Seiberg-Witten
equations \eqref{eqn:PSW} is the following lemma giving a priori
estimates on their solutions.

\begin{lemma}
\label{lem:APE}
\cite[Lemmas 2.2 and 2.3]{tw} There exists a constant $c_0$ such that
if $r$ is sufficiently large and if $(A,\psi=(\alpha,\beta))$ is a
solution to the equations \eqref{eqn:PSW}, then:
\begin{align}
\label{eqn:APE1}
|\alpha| & \le 1 + \frac{c_0}{r},\\
\label{eqn:APE2}
|\beta|^2 &\le \frac{c_0}{r}\left|1-|\alpha|^2\right| + \frac{c_0}{r^2},\\
\label{eqn:APE3}
|\nabla_A\alpha| & \le c_0r^{1/2},\\
\label{eqn:APE4}
|\nabla_A\beta| & \le c_0.
\end{align}
\end{lemma}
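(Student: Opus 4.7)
The plan is to combine the Bochner-Lichnerowicz-Weitzenbock formula \eqref{eqn:BLW} with the curvature equation \eqref{eqn:curvature} to derive pointwise differential inequalities for $|\alpha|^2$ and $|\beta|^2$, then invoke the maximum principle at the points where these quantities attain their maxima.

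Since $D_A\psi = 0$, the BLW formula reads $\nabla_A^*\nabla_A\psi + \frac{s}{4}\psi = \frac{1}{2}\rho({*}F_A)\psi$. Under the decomposition ${\mathbb S}=E\oplus K^{-1}E$, Clifford multiplication by $\lambda$ acts as $+i$ on $E$ and $-i$ on $K^{-1}E$; therefore, after substituting \eqref{eqn:curvature}, the dominant $r\lambda$-piece of ${*}F_A$ contributes, upon pairing with the $E$-component $\alpha$ and taking real parts, a term $-\frac{r}{2}|\alpha|^2(|\alpha|^2-|\beta|^2-1)$ on the right-hand side, while pairing with $\beta$ instead gives $+\frac{r}{2}|\beta|^2(|\alpha|^2-|\beta|^2-1)$. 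The off-diagonal $\op{Im}(\alpha\beta^*)$ piece of ${*}F_A$ produces cross terms pointwise bounded by $Cr|\alpha|^2|\beta|^2$, and the contributions of $\overline{\omega}$ and the scalar curvature are $O(|\psi|^2)$. Using the pointwise identity $\frac{1}{2}d^*d|\alpha|^2 = \op{Re}\langle\nabla_A^*\nabla_A\alpha,\alpha\rangle - |\nabla_A\alpha|^2$ and its counterpart for $\beta$, one arrives at the schematic differential inequalities
\[
\frac{1}{2}d^*d|\alpha|^2 + |\nabla_A\alpha|^2 + \frac{r}{2}|\alpha|^2\bigl(|\alpha|^2 - |\beta|^2 - 1\bigr) \le Cr|\alpha|^2|\beta|^2 + C|\alpha|^2,
\]
\[
\frac{1}{2}d^*d|\beta|^2 + |\nabla_A\beta|^2 + \frac{r}{2}|\beta|^2\bigl(1 + |\beta|^2 - |\alpha|^2\bigr) \le Cr|\alpha|^2|\beta|^2 + C|\beta|^2.
\]
At a maximum of $|\alpha|^2$, $d^*d|\alpha|^2\ge 0$ and the first inequality gives $|\alpha|^2\le 1 + (1+2C)|\beta|^2 + C/r$; feeding this bound into the second inequality at a maximum of $|\beta|^2$ then yields $r|\beta|^2\le C\bigl|1-|\alpha|^2\bigr| + C$, and one short bootstrap between the two produces \eqref{eqn:APE1} and \eqref{eqn:APE2} simultaneously. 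The most delicate part of the bookkeeping is the correct handling of the off-diagonal cross terms, for which one uses the Dirac equation \eqref{eqn:Dirac} to trade $\dbar_A\alpha$-type quantities for $\nabla_A\beta$-type ones so that the cross term is actually absorbed rather than merely estimated away.

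For the gradient bounds \eqref{eqn:APE3} and \eqref{eqn:APE4}, I would differentiate \eqref{eqn:Dirac}, apply BLW to $\nabla_A\psi$ viewed as a section of $T^*Y\tensor{\mathbb S}$, and use the already-established pointwise bounds on $\alpha$ and $\beta$ to control the resulting curvature and zeroth-order terms. The $r$-term in the differential inequality so obtained has a favorable sign for $|\nabla_A\beta|^2$, forcing $|\nabla_A\beta|\le c_0$ at its maximum; the analogue for $|\nabla_A\alpha|^2$ only controls it up to an extra factor of $r$, reflecting the characteristic length scale $r^{-1/2}$ on which $\alpha$ varies in the vortex model of \S\ref{sec:vortex}. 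The main obstacle throughout is the sign-and-constant bookkeeping in the Weitzenbock computations, in particular because a sign flip in the $\rho({*}F_A)\psi$ contribution would reverse the direction of the maximum principle and destroy all four bounds, in accord with the warning in Remark~\ref{rem:conventions}.
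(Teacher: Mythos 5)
Your overall strategy for \eqref{eqn:APE1} and \eqref{eqn:APE2} (Weitzenbock plus maximum principle) is the right one, but there are two concrete gaps. First, the cross terms cannot be ``estimated away'' as $Cr|\alpha|^2|\beta|^2$ with an undetermined constant $C$: in your $\beta$-inequality the favorable term on the left is only $\tfrac{r}{2}|\beta|^2(1+|\beta|^2-|\alpha|^2)\approx \tfrac{r}{2}|\beta|^2$ when $|\alpha|\approx 1$, so a right-hand side $Cr|\alpha|^2|\beta|^2$ with $C\ge\tfrac12$ makes the inequality vacuous and the maximum principle gives nothing. What actually happens is an exact algebraic cancellation: the Clifford identity $\rho(\langle\rho(\cdot)\psi,\psi\rangle)\psi=-|\psi|^2\psi$ turns the full quadratic curvature term into the diagonal expression $\tfrac{r}{2}(|\alpha|^2+|\beta|^2\mp 1)$ in the two components, with all remaining off-diagonal terms $r$-independent (the bundle maps $f_2,\dots,f_5'$). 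This cancellation comes from the algebra of Clifford multiplication, not from trading terms via the Dirac equation as you propose. The paper also gets \eqref{eqn:APE1} more cleanly by pairing the full-spinor identity \eqref{eqn:APEBLW} with $\psi$ itself, where $\op{Re}\langle i\rho(\lambda)\psi,\psi\rangle=-|\alpha|^2+|\beta|^2\ge -|\psi|^2$ gives $|\psi|^2\le 1+c_1r^{-1}$ outright, avoiding your circular bootstrap between the $\alpha$- and $\beta$-estimates.

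Second, and more importantly, \eqref{eqn:APE2} is a \emph{pointwise} inequality relating $|\beta(y)|^2$ to $|1-|\alpha(y)|^2|$ at every $y\in Y$. Applying the maximum principle to $|\alpha|^2$ and $|\beta|^2$ separately, at their respective (generally distinct) maximum points, can only relate $\sup|\beta|^2$ to the value of $|1-|\alpha|^2|$ at the point where $|\beta|^2$ is maximized; it does not yield the pointwise statement. The device in the paper is to apply the maximum principle to the single function $|\beta|^2-\tfrac{c_3}{r}(1-|\alpha|^2)-\tfrac{c_4}{r^2}$, obtained by adding $c_3r^{-1}$ times the $\alpha$-inequality to the $\beta$-inequality and checking that the lower-order terms (in particular $|\nabla_A\alpha||\beta|$) are dominated for $r\gg c_4\gg c_3\gg 0$; showing this function is $\le 0$ everywhere is what gives \eqref{eqn:APE2}. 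Finally, for \eqref{eqn:APE3} and \eqref{eqn:APE4} the paper does not run a second Weitzenbock/maximum-principle argument on $\nabla_A\psi$ as you suggest (the curvature terms there are of size $r$ and the signs are not favorable); it instead invokes local elliptic regularity after rescaling balls of radius $r^{-1/2}$, as in \cite[\S 6.2]{tw}.
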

\noindent

\begin{proof}
  Let $\widetilde{\nabla}_A$ denote the spin covariant derivative on
  ${\mathbb S}$, in order to avoid confusion with the covariant
  derivatives $\nabla_A$ on $E$ and $K^{-1}E$.  Putting the perturbed
  Seiberg-Witten equations \eqref{eqn:PSW} into the
  Bochner-Lichnerowitz-Weitzenbock formula \eqref{eqn:BLW}, we obtain
\begin{equation}
\label{eqn:APEBLW}
0 = \widetilde{\nabla}_A^*\widetilde{\nabla}_A\psi + \frac{s}{4}\psi +
\frac{r}{2}\left(|\psi|^2\psi + i\rho(\lambda)\psi\right) -
\frac{i}{2}\rho(\overline{\omega})\psi.
\end{equation}
Since the spin connection is compatible with the Hermitian metric we
have $\frac{1}{2}d^*d|\psi|^2 =
\op{Re}\langle\psi,\widetilde{\nabla}_A^*\widetilde{\nabla}_A\psi\rangle
- |\widetilde{\nabla}_A\psi|^2$.  Also
$\rho(\lambda)(\alpha,\beta)=(i\alpha,-i\beta)$.  So taking the real
inner product of \eqref{eqn:APEBLW} with $\psi$, we obtain
\[
0 \ge \frac{1}{2}d^*d|\psi|^2 + |\widetilde{\nabla}_A\psi|^2
+\frac{r}{2}\left(|\psi|^4 - (1+c_1r^{-1})|\psi|^2
\right),
\]
where the constant $c_1$ depends only on the minimum of the scalar
curvature $s$ and the maximum of $|\overline{\omega}|$.  At a point
where $|\psi|$ is maximized, the maximum principle tells us that
$d^*d|\psi|^2\ge 0$.  Therefore
\[
|\psi|^2\le 1+c_1r^{-1}
\]
where $|\psi|$ is maximized, and consequently everywhere.  This implies
\eqref{eqn:APE1}.  

To prove \eqref{eqn:APE2}, we consider separately the $E$ and
$K^{-1}E$ components of equation \eqref{eqn:APEBLW}.  These are
\[
\begin{split}
0 &= \nabla_A^*\nabla_A\alpha +
\frac{r}{2}(|\alpha|^2+|\beta|^2-1)\alpha +
f_2\alpha+f_3\beta+f_4\nabla_A\alpha + f_5\nabla_A\beta,\\
0 &= \nabla_A^*\nabla_A\beta +
\frac{r}{2}(|\alpha|^2+|\beta|^2+1)\beta +
f_2'\alpha+f_3'\beta+f_4'\nabla_A\alpha+f_5'\nabla_A\beta
\end{split}
\]
where $f_2,\ldots,f_5'$ are bundle maps between $E$ and $K^{-1}E$
depending only on the Riemannian metric on $Y$.  Taking the inner
products of these equations with $\alpha$ and $\beta$ respectively, we
obtain
\[
\begin{split}
0 &\ge \frac{1}{2}d^*d|\alpha|^2 + |\nabla_A\alpha|^2 +
\frac{r}{2}(|\alpha|^2+|\beta|^2-1)|\alpha|^2 -
c_2(|\alpha|^2+|\alpha||\beta|+|\alpha||\nabla_A\beta|),\\
0 &\ge \frac{1}{2}d^*d|\beta|^2 + |\nabla_A\beta|^2 +
\frac{r}{2}(|\alpha|^2+|\beta|^2+1)|\beta|^2 - c_2(|\beta|^2 +
|\alpha||\beta| + |\nabla_A\alpha||\beta|)
\end{split}
\]
where $c_2$ is a constant depending only on the
Riemannian metric on $Y$.  Now let $c_3$ and $c_4$ be large
constants.  Adding $c_3r^{-1}$ times the first inequality to the second
gives
\begin{gather*}
  0 \ge \frac{1}{2}d^*d
  \left(|\beta|^2-\frac{c_3}{r}(1-|\alpha|^2)-\frac{c_4}{r^{2}}\right)
  +
  \frac{r}{2}\left(|\beta|^2-\frac{c_3}{r}(1-|\alpha|^2)-\frac{c_4}{r^{2}}\right)
  \\
  + \frac{c_4}{2r} + \frac{c_3}{r}|\nabla_A\alpha|^2 +
  \frac{c_3}{2}((1-|\alpha|^2)^2+|\alpha|^2|\beta|^2) +
  |\nabla_A\beta|^2 + \frac{r}{2}(|\alpha|^2+|\beta|^2)|\beta|^2  \\
  - c_2(|\beta|^2 + |\alpha||\beta| + |\nabla_A\alpha||\beta|) -
  \frac{c_2c_3}{r}(|\alpha|^2 + |\alpha||\beta| +
  |\alpha||\nabla_A\beta|).
\end{gather*}
Inspection of the above inequality using \eqref{eqn:APE1} shows that
the sum of the terms on the second and third line is nonnegative
provided that $r>>c_4>>c_3>>0$.  (The $|\nabla_A\alpha||\beta|$ term
is a bit tricky and requires consideration of several cases.)  The
maximum principle now implies \eqref{eqn:APE2}.

The estimates \eqref{eqn:APE3} and \eqref{eqn:APE4} are then obtained
using elliptic regularity, as explained in \cite[\S6.2]{tw}.
\end{proof}

As a quick corollary, we have:

\begin{proof}[Proof of Lemma~\ref{lem:aes}.]
  Suppose $c_1(E)\neq 0$ and $(A,\psi)$ is a solution to the equations
  \eqref{eqn:PSW}.  Since $c_1(E)\neq 0$ there exists a point where
  $\alpha=0$.  At this point $1-|\psi|=1-|\beta|$, and it follows from
  \eqref{eqn:APE2} that this is at least $1-cr^{-1/2}$ for some
  constant $c$ if $r$ is sufficiently large.
\end{proof}

We also need an a priori estimate on the connection.  Recall that
$A_1$ denotes a reference connection on $E$.  Write a general
connection on $E$ as $A=A_1+\hat{a}$ where $\hat{a}$ is an
imaginary-valued $1$-form on $Y$.  Also recall that $\mc{E}=\mc{E}(A)$
denotes the energy functional \eqref{eqn:energy}.

\begin{lemma}
\label{lem:gauge}
\cite[Lemma 2.4]{tw}
Let $(A,\psi)$ be a solution to the equations \eqref{eqn:PSW}.  Then
by a gauge transformation one can arrange that
\begin{equation}
\label{eqn:hata}
|\hat{a}| \le c(r^{2/3}|\mc{E}|^{1/3} + 1)
\end{equation}
where $c$ is an $r$-independent constant.
\end{lemma}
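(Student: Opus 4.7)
The plan is to proceed in three steps, relying on the pointwise a priori estimates of Lemma~\ref{lem:APE} for the spinor and on Hodge theory for the connection. First I would fix the gauge so that $\hat{a}$ is as normalized as possible. The gauge group $\mc{G}=C^\infty(Y,S^1)$ acts on $\hat{a}=A-A_1$ by $\hat{a}\mapsto \hat{a}-g^{-1}dg$. Writing $g=e^{if}$ with $f$ real-valued and solving the scalar Laplace equation $\Delta f=-id^*\hat{a}$ places $\hat{a}$ in Coulomb gauge $d^*\hat{a}=0$. Meanwhile, gauge transformations with nontrivial class $[g]\in H^1(Y;\Z)$ shift the harmonic part of $\hat{a}$ by elements of $-2\pi i\,H^1(Y;\Z)$, so we may arrange that the harmonic component $\hat{a}_h$ lies in a fixed bounded fundamental domain of $iH^1(Y;\R)/2\pi i\,H^1(Y;\Z)$. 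After these adjustments, Hodge theory gives the representation $\hat{a}=\hat{a}_h+d^*G(F_A-F_{A_1})$, where $G$ is the Green operator of the Hodge Laplacian on coexact $2$-forms. In dimension $3$ the kernel of $d^*G$ is pointwise bounded by $C|x-y|^{-2}$, so
\[
|\hat{a}(x)|\le C\int_Y\frac{|F_A(y)|}{|x-y|^2}\,d\op{vol}(y)+C.
\]

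The second step is to bound $\|F_A\|_{L^1}$ in terms of $|\mc{E}|$. Substituting Lemma~\ref{lem:APE} into \eqref{eqn:curvature} yields $|F_A|\le r\bigl|1-|\alpha|^2+|\beta|^2\bigr|+2r|\alpha||\beta|+|\overline{\omega}|$. Pairing \eqref{eqn:curvature} with $\lambda$ and using that $\op{Im}(\alpha\beta^*)$ annihilates $\lambda$ yields
\[
\mc{E}=r\int_Y\bigl(1-|\alpha|^2+|\beta|^2\bigr)\,d\op{vol}+O(1);
\]
combined with the lower bound $1-|\alpha|^2+|\beta|^2\ge -C/r$ from \eqref{eqn:APE1}, this gives $r\int_Y\bigl|1-|\alpha|^2+|\beta|^2\bigr|\,d\op{vol}\le |\mc{E}|+C$. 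Using \eqref{eqn:APE2} to absorb the $|\beta|^2$ contribution yields the same type of bound for $1-|\alpha|^2$. For the cross term, \eqref{eqn:APE2} also gives $|\beta|\le Cr^{-1/2}\bigl|1-|\alpha|^2\bigr|^{1/2}+Cr^{-1}$, so Cauchy--Schwarz delivers $\int_Y r|\alpha||\beta|\,d\op{vol}\le C(|\mc{E}|+1)^{1/2}$. Together these yield $\|F_A\|_{L^1}\le C(|\mc{E}|+1)$, and the pointwise bound $\|F_A\|_{L^\infty}\le Cr$ is immediate.

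The final step is to split the kernel integral into the regions $\{|y-x|<\epsilon\}$ and its complement. On the ball, $\int_{B_\epsilon}|y-x|^{-2}\,d\op{vol}\le C\epsilon$ together with $|F_A|\le Cr$ bounds the contribution by $Cr\epsilon$; on the complement, $|x-y|^{-2}\le\epsilon^{-2}$ and the $L^1$ bound give $C(|\mc{E}|+1)/\epsilon^2$. Choosing $\epsilon=\bigl((|\mc{E}|+1)/r\bigr)^{1/3}$ balances the two and produces $|\hat{a}(x)|\le Cr^{2/3}(|\mc{E}|+1)^{1/3}+C$, which implies the claimed estimate after enlarging the constant. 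The main technical obstacle is step~1, namely the simultaneous Coulomb-and-harmonic gauge fixing and the justification of the pointwise Green's function bound on $d^*G$; granted these, the splitting and scaling optimization in step~3 are routine, and the a priori spinor estimates of Lemma~\ref{lem:APE} make step~2 bookkeeping.
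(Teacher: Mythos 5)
Your proof is correct and follows essentially the same route as the paper: Coulomb-plus-harmonic gauge fixing, the Green's function representation with $\op{dist}(x,\cdot)^{-2}$ kernel, the $L^1$ bound $\int_Y|F_A|\le c(|\mc{E}|+1)$ coming from the energy via the curvature equation and Lemma~\ref{lem:APE}, and the ball/complement splitting optimized at radius $\bigl((|\mc{E}|+1)/r\bigr)^{1/3}$. The only (harmless) difference is that you control the cross term $2r\op{Im}(\alpha\beta^*)$ by Cauchy--Schwarz at the integral level, which is if anything slightly more careful than the paper's pointwise assertion \eqref{eqn:cest}.
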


\begin{proof}
  Recall the Hodge decomposition
\[
\Omega^1 = \mc{H}^1 \oplus d\Omega^0 \oplus d^*\Omega^2
\]
where $\mc{H}^1$ denotes the space of harmonic 1-forms.  Let
$\frak{h}\in\mc{H}^1$ denote the harmonic component of $-i\hat{a}$.  A
gauge transformation $g:Y\to S^1$ subtracts $2g^{-1}dg$ from
$\hat{a}$.  Thus by a gauge transformation one can eliminate the
$d\Omega^0$ component of $-i\hat{a}$, and one can shift the harmonic
component $\frak{h}$ by any element of $4\pi H^1(Y;\Z)$.  So we can
apply a (unique) gauge transformation to arrange that $\hat{a}$ is
co-closed, and $\frak{h}$ sends every element of some chosen basis of
$H_1(Y;\Z)$ mod torsion to a number in the interval $[0,4\pi)$.  In
particular $|\frak{h}|$ is bounded from above by a constant depending
only on the Riemannian metric.

We now have
\[
\hat{a} = \frak{h} + (d+d^*)^{-1}(F_A-F_{A_1})
\]
where $(d+d^*)^{-1}$ denotes the Green's function for $d+d^*$, and
$|\frak{h}-(d+d^*)^{-1}F_{A_1}|$ is bounded by an $r$-independent
constant.  Standard estimates for the Green's function then give
\begin{equation}
\label{eqn:GFE}
|\hat{a}(x)| \le c\left(1 + \int_Y\frac{|F_A|}{\op{dist}(x,\cdot)^2}\right).
\end{equation}

It follows from equation \eqref{eqn:curvature} and the a priori
estimates \eqref{eqn:APE1} and \eqref{eqn:APE2} that
\begin{equation}
\label{eqn:cest}
*F_A=-ir|1-|\alpha|^2|\lambda + O(1).
\end{equation}
In particular, it follows from
\eqref{eqn:cest} that the integrand in the energy functional
\eqref{eqn:energy} satisfies
\begin{equation}
\label{eqn:ei}
i\lambda\wedge F_A = |F_A| + O(1).
\end{equation}
(We are omitting the volume form from the notation here.)

To analyze the integral in \eqref{eqn:GFE}, divide the integration
domain into a ball of radius $\rho$ centered at $x$, and its
complement.  Since $|F_A|$ is bounded by a constant multiple of $r$,
the integral over the ball is bounded by a constant times $r\rho$.  On
the other hand, by \eqref{eqn:ei} the integral over the complement of
the ball is bounded by a constant times $|\mc{E}|\rho^{-2}$, plus some
other constant times $\rho^{-2}$.  Thus
\begin{equation}
\label{eqn:ahb}
|\hat{a}(x)| \le c\left(1 + r\rho + \frac{|\mc{E}|+c}{\rho^{2}}\right)
\end{equation}
for some new constant $c>0$.  Now take $\rho=r^{-1/3}|\mc{E}|^{1/3}$.
Note that $\rho=O(1)$.  The estimate \eqref{eqn:ahb} then implies
\eqref{eqn:hata}.
\end{proof}

We can now give:

\begin{proof}[Proof of Lemma~\ref{lem:cse}.]
Recall that we are assuming that $2c_1(E)+c_1(K^{-1})$ is torsion, so
that the Chern-Simons functional is gauge invariant.  Thus we can
assume that the connection is the one provided by
Lemma~\ref{lem:gauge}.  It then follows from the definition of the
Chern-Simons functional \eqref{eqn:cs} that
\[
|cs(A)| \le
\int_Yc(r^{2/3}\mc{E}^{1/3}+1)(|F_A|+c).
\]
On the other hand, by equation \eqref{eqn:ei} we have
$\int_Y|F_A|=\mc{E}+O(1)$.  The lemma follows.
\end{proof}

\subsection{Existence of a Reeb orbit}
\label{sec:ERO}

We can now sketch the proof of Taubes's Theorem~\ref{thm:convergence}.
To start, we will outline the proof of the following:

\begin{claim}
  Fix $\delta,C>0$.  Then there exists $c>0$ such that the following
  holds.  Suppose $r>c$. Let $(A,\psi)$ be a solution to the perturbed
  Seiberg-Witten equations \eqref{eqn:PSW} with
  $\sup_Y(1-|\psi|)>\delta$ and $\mc{E}(A)<C$.  Then there exists
  $n<c$ and a finite collection of arcs $\{\gamma_i\}$ in $Y$ indexed
  by $i\in\Z/n$ such that:
\begin{itemize}
\item
Each $\gamma_i$ is tangent to the Reeb vector field.
\item The length of each arc $\gamma_i$ is at most $c$.  Also the sum
  of the lengths of the arcs is at least $c^{-1}$.
\item The distance between the endpoint of $\gamma_i$ and the starting
  point of $\gamma_{i+1}$ is at most $cr^{-1/2}$.
\end{itemize}
\end{claim}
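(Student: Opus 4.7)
The plan is to construct the arcs $\gamma_i$ as Reeb segments starting and ending at (near-)zeros of $\alpha$, organized using the local vortex picture of \S\ref{sec:vortex}. First, the a priori bounds \eqref{eqn:APE1} and \eqref{eqn:APE2} convert the hypothesis $\sup_Y(1-|\psi|) > \delta$ into the existence, for large $r$, of a point $y_\ast \in Y$ with $|\alpha(y_\ast)|^2 \le 1-\delta/2$. Rescaling geodesic normal coordinates centered at $y_\ast$ by $r^{1/2}$, the pair $(A,\alpha)$ converges on compact subsets of $\xi_{y_\ast}\cong\C$ to a finite-energy solution of the vortex equations \eqref{eqn:vortex} with $|\alpha|\not\equiv 1$. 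By Lemma~\ref{lem:vortex} this limit has a zero, and transferring back produces a genuine zero of $\alpha$ within distance $O(r^{-1/2})$ of $y_\ast$.

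Next I would pin down the local structure of $\alpha^{-1}(0)$. At a zero $y_0$, the second component of the Dirac equation \eqref{eqn:Dirac} gives $\dbar_A\alpha(y_0) = \tfrac{i}{2}\nabla_{A,R}\beta(y_0) + O(|\beta|)$, of size $O(1)$ by \eqref{eqn:APE2} and \eqref{eqn:APE4}, while the vortex model forces the complex linear part $\partial_A\alpha(y_0)\colon \xi_{y_0}\to E_{y_0}$ to be a $\C$-linear isomorphism of operator norm $\sim r^{1/2}$. Hence the restriction of $\nabla_A\alpha(y_0)$ to $\xi_{y_0}$ is invertible as an $\R$-linear map, with inverse of norm $O(r^{-1/2})$. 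The first component of \eqref{eqn:Dirac} gives $|\nabla_{A,R}\alpha(y_0)| = O(1)$, so if $aR(y_0) + w$ with $w\in\xi_{y_0}$ lies in $\ker\nabla_A\alpha(y_0)$ then $|w|/|a| = O(r^{-1/2})$. Thus $\alpha^{-1}(0)$ is locally a smooth arc whose tangent makes angle $O(r^{-1/2})$ with $R(y_0)$, and the vortex exponential decay $1 - |\alpha|^2 \le \exp(-cr^{1/2}\op{dist}(\cdot,\alpha^{-1}(0)))$ confines the concentration set $Z \eqdef \{|\alpha|^2 \le 3/4\}$ to a tube of radius $O(r^{-1/2})$ around $\alpha^{-1}(0)$.

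To build the chain, let $\gamma_i$ be the maximal forward Reeb segment starting at a zero $y_i$ whose distance from $\alpha^{-1}(0)$ never exceeds a small fixed multiple of $r^{-1/2}$; the tangent-direction estimate keeps $\gamma_i \subset Z$, and at its forward endpoint a zero $y_{i+1}$ lies within distance $cr^{-1/2}$, from which $\gamma_{i+1}$ starts. Combining \eqref{eqn:cest} with \eqref{eqn:ei} yields $r\int_Y(1-|\alpha|^2) \le \mc{E}(A)+O(1) < C+O(1)$, so $\op{Vol}(Z) \le cCr^{-1}$; since $Z$ contains a tube of cross-sectional area $\sim r^{-1}$ around $\bigcup_i \gamma_i$, one obtains $\sum_i \op{length}(\gamma_i) \le cC$, and each individual arc has length at most $cC$. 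The number $n$ is bounded by $cC$ using that each distinct vortex strand crossing a transverse contact slice contributes an $r$-independent amount to $\int(1-|\tilde\alpha|^2)$ via the vortex normalization $\int_\C(1-|\tilde\alpha|^2) = 2\pi k \ge 2\pi$. The lower bound $\sum_i \op{length}(\gamma_i) \ge c^{-1}$ comes from the same tangent-direction analysis at $y_0$: the Reeb trajectory through $y_0$ tracks the zero curve at rate $O(r^{-1/2})$ in the normal direction, so either it remains inside the $r^{-1/2}$-tube for Reeb time bounded below by a universal constant (giving $\op{length}(\gamma_1) \ge c^{-1}$), or it exits and re-enters via another zero on the same vortex strand after Reeb time bounded below, yielding a cyclic chain of total length $\ge c^{-1}$.

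The main obstacle, which is the technical heart of the argument, is showing that the Reeb arc $\gamma_i$ actually remains trapped in the $r^{-1/2}$-tube around $\alpha^{-1}(0)$ until it reaches the next near-zero of $\alpha$: the pointwise bound $|\nabla_{A,R}\alpha| = O(1)$ only controls $|\alpha|$ along a Reeb arc for time $O(1)$ at the starting zero, and one needs a comparable bound throughout the arc. Closing this gap requires iterating the Bochner--Weitzenb\"ock argument of Lemma~\ref{lem:APE} to produce pointwise estimates on $|\nabla_{A,R}\alpha|$ that decay exponentially with distance from $\alpha^{-1}(0)$ inside the tube, together with a comparison ODE between the Reeb trajectory and the nearest point of $\alpha^{-1}(0)$; this is carried out in \cite[\S3]{tw}.
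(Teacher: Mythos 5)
Your overall architecture (find a low point of $|\alpha|$, compare to a vortex, follow the concentration along the Reeb direction, close up using the energy bound) is in the right spirit, but the step you yourself flag as ``the main obstacle'' is a genuine gap, and the way you set it up makes it harder than it needs to be. You propose to realize each $\gamma_i$ as a Reeb segment trapped in an $O(r^{-1/2})$-tube around $\alpha^{-1}(0)$, which requires (i) knowing that $\alpha^{-1}(0)$ is locally a smooth arc nearly tangent to $R$, and (ii) a trapping estimate for the Reeb flow relative to that arc. Neither is available at this stage. For (i), your assertion that ``the vortex model forces $\partial_A\alpha(y_0)$ to be a $\C$-linear isomorphism of operator norm $\sim r^{1/2}$'' is unjustified: the rescaled limit is only a $C^0$-approximation by a finite-energy vortex, and even an exact vortex can have a zero of multiplicity $k>1$, at which $\partial_A\alpha$ vanishes; in three dimensions the corresponding zero set can be a braid of strands rather than a single smooth curve, and no derivative lower bound of the form you need follows from the a priori estimates \eqref{eqn:APE1}--\eqref{eqn:APE4}. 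Since (ii) and your lower bound on $\sum_i\op{length}(\gamma_i)$ both rest on this tangent-direction analysis, the argument as written does not close; deferring to \cite[\S3]{tw} for ``exponential decay of $|\nabla_{A,R}\alpha|$ plus a comparison ODE'' is not a proof, and is not in fact how the difficulty is resolved.

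The paper's route avoids tracking $\alpha^{-1}(0)$ pointwise altogether. One builds a Reeb cylinder $D\times[0,\rho_2]$ of transverse radius $\rho_1 r^{-1/2}$ over the low point and considers the cut-off slice energy $L(t)=r\int_{D\times\{t\}}(1-|\alpha|^2)\chi(|z|/\rho_1)$ of \eqref{eqn:Lt}. The gradient bound $|\nabla_A\alpha|\le c_0 r^{1/2}$ of \eqref{eqn:APE3} gives $L(0)\ge c_1$ as in \eqref{eqn:lbe1}, and the Reeb-direction bound $|\nabla_{A,R}\alpha|\le c_0$ of \eqref{eqn:MAPE1} gives $|dL/dt|\le c_2$ as in \eqref{eqn:lbe2}; hence after a fixed small Reeb time $\rho_2$ the terminal disk still carries slice energy $\ge c_1/2$, and only then is the rescaled vortex comparison and Lemma~\ref{lem:vortex} invoked to produce a zero of $\alpha$ on that disk, from which the next cylinder is launched. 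The arcs $\gamma_i$ are the cores of these cylinders (so each has length exactly $\rho_2$, with endpoints matching up to $O(r^{-1/2})$ by construction), and the chain must close because each cylinder contributes a definite $r$-independent amount to $\mc{E}(A)<C$ via \eqref{eqn:cest} and \eqref{eqn:ei}, bounding $n$. This integrated, slice-by-slice propagation is exactly the device that replaces your pointwise trapping estimate; I would rebuild your argument around it.
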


It follows from the claim that given a sequence $(r_n,A_n,\psi_n)$ of
solutions to the perturbed Seiberg-Witten equations satisfying
conditions (1) and (2) in Theorem~\ref{thm:convergence}, one can pass
to a subsequence such that the corresponding chains of arcs $\gamma_i$
converge to a Reeb orbit.  

To prove the claim, let $(A,\psi)$ be a solution to the perturbed
Seiberg-Witten equations with $\sup_Y(1-|\psi|)>\delta$ and
$\mc{E}(A)<C$.  Write $\psi=(\alpha,\beta)$ as usual.

It follows from the Dirac equation \eqref{eqn:Dirac} and the a priori
estimates \eqref{eqn:APE1}, \eqref{eqn:APE2}, and \eqref{eqn:APE4}
that
\begin{align}
\label{eqn:MAPE1}
|\nabla_{A,R}\alpha| &\le c_0,\\
\label{eqn:MAPE2}|\dbar_A\alpha| &\le c_0
\end{align}
where $c_0$ is an $r$-independent constant.

It also follows from \eqref{eqn:APE1} and \eqref{eqn:APE2} that if $r$
is sufficiently large, then there exist points where
$1-|\alpha|^2>\delta$.  Assume that it is and choose such a point $p$.
Also fix $r$-independent constants $\rho_1,\rho_2>0$ (to be specified
more later).  We can now choose an embedding of the disc $D$ of radius
$\rho_1 r^{-1/2}$ into $Y$ mapping the center of the disc to $p$, such
that the disc is orthogonal to the Reeb vector field at $p$, and the
induced metric on the disc is Euclidean to first order at the origin.
Denote the coordinates on the disc by $z=x+iy$.  We can uniquely
extend the embedding of the disc $D$ to a map from the cylinder
$D\times[0,\rho_2]$ to $Y$ such that the derivative of the interval
coordinate $t$ maps to the Reeb vector field.  We can assume that the
map of the cylinder is an embedding (for $r>>\rho_1$), otherwise we
already know that the claim is true.  We then take the arc $\gamma_1$
to be the image of $\{0\}\times[0,\rho_2]$ under this embedding.

Now let $\chi:[0,1]\to\R$ be a cutoff function which is $1$ on
$[0,1/3]$, monotone decreasing, and $0$ on $[2/3,1]$.  For
$t\in[0,\rho_2]$ define
\begin{equation}
\label{eqn:Lt}
L(t) \eqdef r\int_{D\times\{t\}}(1-|\alpha|^2)\chi(|z|/\rho_1).
\end{equation}
In view of \eqref{eqn:cest} and \eqref{eqn:ei}, this is the
contribution to the energy $\mc{E}(A)$ from the disc $D\times\{t\}$
(weighted by the cutoff function), up to an error of order
$r^{-1/2}$.  Now it follows from \eqref{eqn:APE3} that
$1-|\alpha|^2>\delta/2$ on a disc of radius order $r^{-1/2}$ in
$D\times\{0\}$, so
\begin{equation}
\label{eqn:lbe1}
L(0)\ge c_1
\end{equation}
for some $r$-independent constant $c_1>0$.  Also, differentiating
\eqref{eqn:Lt} with respect to $t$ and using the a priori estimates
\eqref{eqn:APE1} and \eqref{eqn:MAPE1} shows that
\begin{equation}
\label{eqn:lbe2}
\left|\frac{dL(t)}{dt}\right| < c_2
\end{equation}
for some $r$-independent constant $c_2$.  In particular, if $\rho_2>0$
is chosen sufficiently small, then $L(\rho_2)\ge c_1/2$, and the
contribution to the energy $\mc{E}(A)$ from the cylinder is bounded
from below by an $r$-independent constant.

To complete the proof of the claim, it is enough to show that the disk
$D\times\{\rho_2\}$ contains a point where $1-|\alpha|^2>\delta$.  We
can then repeat the above argument to construct a sequence of
cylinders in $Y$, and define the arcs $\gamma_i$ to be the core
intervals of these cylinders.  There is an upper bound to the number
of cylinders one can construct before a new cylinder overlaps an old
cylinder, because each cylinder contributes at least $c_2$ to the
energy, while the total energy is bounded from above by $C$.  When a
new cylinder overlaps an old one, we then obtain the desired chain of
arcs.

In fact, if the constant $\rho_1$ is sufficiently large, then $\alpha$
must have a zero on $D\times\{\rho_2\}$.  To see why, rescale
$D\times\{\rho_2\}$ by the map $z\mapsto r^{1/2}z$, so as to identify
$D\times\{\rho_2\}$ with the disc of radius $\rho_1$.  The restriction
of $(A,\alpha)$ to the latter now satisfies
\[
\begin{split}
\dbar_A\alpha &= O(r^{-1/2}),\\
*F_A &= -i(1-|\alpha|^2) + O(r^{-1/2}).
\end{split}
\]
as a result of \eqref{eqn:MAPE2}, \eqref{eqn:curvature},
\eqref{eqn:APE1} and \eqref{eqn:APE2}.  That is, $(A,\alpha)$
satisfies the vortex equations \eqref{eqn:vortex} on the disk, up to
an error of order $r^{-1/2}$.  Moreover, if $\rho_1$ is sufficiently
large, then there must exist a large $\rho'<\rho_1$ such that
$1-|\alpha|^2<\delta$ on the circle of radius $\rho'$; otherwise the
arguments giving \eqref{eqn:lbe1} and \eqref{eqn:lbe2} would imply
that the energy $\mc{E}(A)>C$.  A compactness argument then shows that
if $r$ is sufficiently large, then $(A,\alpha)$ not only approximately
solves the vortex equations, but is $C^0$-approximated by an actual
solution $(A',\alpha')$ to the vortex equations on $\C$ with finite
energy, where $|\alpha'|$ is close to $1$ outside of the disc of
radius $r'$.  It now follows from Lemma~\ref{lem:vortex} that $\alpha$
must have a zero in the disc of radius $\rho'$.

This completes the sketch of the proof of the claim, and hence the
existence of a Reeb orbit.  A more careful version of this argument
keeping track of all of the zero set of $\alpha$, see
\cite[\S6.4]{tw}, shows that in fact there exists a nonempty orbit set
Poincar\'{e} dual to $c_1(E)$.

\section{Beyond the Weinstein conjecture}
\label{sec:BWC}

With the three-dimensional Weinstein conjecture now proved, there are
various directions in which one might try to generalize.

\subsection{Improved lower bounds}
\label{sec:ILB}

The Weinstein conjecture gives a lower bound of one on the number of
embedded Reeb orbits for a contact form $\lambda$ on a closed oriented
3-manifold $Y$.  Can one improve this lower bound?

As we saw in \S\ref{sec:SE}, the standard contact form on an
irrational ellipsoid has exactly two embedded Reeb orbits.  Also one
can take the quotient of the ellipsoid by a $\Z/p$ action on $\C^2$
that rotates each $\C$ factor, to obtain a contact form on a lens
space with exactly two embedded Reeb orbits.  Perhaps surprisingly,
these are the only examples known to us of contact forms on closed
(connected) 3-manifolds with only finitely many embedded Reeb orbits.
It is shown in \cite{hwz} that for a large class of contact forms on
$S^3$ there are either two or infinitely many embedded Reeb orbits.
For other 3-manifolds one can ask:

\begin{question}
  If $Y$ is a closed oriented connected 3-manifold other than a sphere
  or a lens space, then does every contact form on $Y$ have infinitely many
  embedded Reeb orbits?
\end{question}

As mentioned in \S\ref{sec:previous}, Colin-Honda \cite{colin-honda}
used linearized contact homology to prove the existence of infinitely
many embedded Reeb orbits for many cases of contact structures
supported by open books with pseudo-Anosov monodromy.  Also it is
proved in \cite{wh}, using the isomorphism between Seiberg-Witten
Floer homology and embedded contact homology (see \S\ref{sec:ech}),
that if $Y$ is a closed oriented 3-manifold other than a lens space,
then any contact form on $Y$ with all Reeb orbits nondegenerate (see
\S\ref{sec:NRO}) has at least three embedded Reeb orbits.  Nonetheless
there remains a substantial gap between what we can prove and what
seems to be true.

\subsection{More general vector fields}
\label{sec:MGVF}

Next one might try to prove the existence of closed orbits for
somewhat more general vector fields than Reeb vector fields.  For
example, inspection of the proof of the Weinstein conjecture for a
compact hypersurface $Y$ in $\R^{2n}$ of contact type, see
\S\ref{sec:hct}, shows that the contact type hypothesis can be
replaced by the weaker assumption that the hypersurface is ``stable''.
A hypersurface $Y$ in a symplectic manifold $(M,\omega)$ is called
{\em stable\/} if it has a neighborhood $N$ with an identification
$N\simeq (-\delta,\delta)\times Y$ sending $Y$ to $\{0\}\times Y$,
such that the characteristic foliations on $\{\epsilon\}\times Y$ are
conjugate for all $\epsilon\in(-\delta,\delta)$.  It turns out (see
\cite[Lem.\ 2.3]{cm}) that a compact hypersurface $Y$ is stable if and
only if $\omega|_Y$ is part of a ``stable Hamiltonian structure'' on
$Y$.  A {\em stable Hamiltonian structure\/} on a $2n-1$ dimensional
oriented manifold $Y$ is a pair $(\lambda,\omega)$ where $\lambda$ is
a $1$-form on $Y$ and $\omega$ is a closed $2$-form on $Y$, such that
$\lambda\wedge \omega^{n-1}>0$ and
$\Ker(\omega)\subset\Ker(d\lambda)$.  A stable Hamiltonian structure
determines a ``Reeb vector field'' $R$ characterized by
$\omega(R,\cdot)=0$ and $\lambda(R)=1$.  A contact form is a special
case of this in which $\omega=d\lambda$.

It is shown in \cite{wh}, again using the isomorphism between
Seiberg-Witten Floer homology and embedded contact homology, that for
any closed oriented connected 3-manifold $Y$ that is not a
$T^2$-bundle over $S^1$, for any stable Hamiltonian structure on $Y$,
the associated Reeb vector field has a closed orbit.  The same
conclusion is proved without using Seiberg-Witten theory, but under
some additional hypotheses, by Rechtman \cite{rechtman}.  In general,
however, it remains unclear what exactly one needs to assume about a
vector field in order to guarantee the existence of a closed orbit.

\subsection{Non-unique ergodicity}
\label{sec:ue}

For even more general vector fields, one can try to prove weaker
statements than the existence of a closed orbit.  For example let $Y$
be a closed oriented 3-manifold with a volume form $\Omega$, and let
$V$ be a smooth vector field on $Y$.  The vector field $V$ generates a
1-parameter family of diffeomorphisms $\phi_t:Y\to Y$.  Assume that
$V$ is divergence free, meaning that each $\phi_t$ preserves the
volume form, or equivalently $d(\imath_V\Omega)=0$.  A measure
$\sigma$ on $Y$ is said to be ``$V$-invariant'' if
$(\phi_t)_*\sigma=\sigma$ for each $t\in\R$.  The vector field $V$ is
called {\em uniquely ergodic\/} if the only $V$-invariant measures on
$Y$ are real multiples of the volume form.  Note that if $V$ has a
closed orbit then it is not uniquely ergodic.  More generally, one
could look for conditions on $V$ that guarantee that it is not
uniquely ergodic.

Taubes \cite{tue} establishes such a condition as follows.
Call $V$ ``exact'' if $\imath_V\Omega=d\lambda$ for some
$1$-form $\lambda$.  In this case define the ``self-linking'' of $V$,
cf \cite{ak},
to be
\begin{equation}
\label{eqn:sv}
s_V\eqdef \int_Y\lambda\wedge \imath_V\Omega.
\end{equation}
By Stokes theorem, this does not depend on the choice of $\lambda$.
For example, if $\lambda(V)>0$ everywhere, then the integrand in
\eqref{eqn:sv} is positive, so $s_V>0$.  In this case $\lambda$ is a
contact form and $V$ is a positive multiple of its Reeb vector field.
More generally one has:

\begin{theorem}[Taubes \cite{tue}]
Let $Y$ be a closed oriented 3-manifold with a volume form $\Omega$.
Let $V$ be an exact vector field on $Y$.  Suppose that $s_V\neq 0$.
Then $V$ is not uniquely ergodic.
\end{theorem}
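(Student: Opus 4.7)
The plan is to adapt Taubes's Seiberg-Witten argument of \S\ref{sec:OTP}--\S\ref{sec:MD}, with $V$ playing the role of the Reeb vector field and $\omega\eqdef\imath_V\Omega=d\lambda$ playing the role of $d\lambda$ in the contact setting. Divergence-freeness of $V$ makes $\omega$ closed; exactness makes $\omega$ equal to $d\lambda$; and the hypothesis $s_V\neq 0$ provides the analogue of $\int_Y\lambda\wedge d\lambda>0$. Non-unique ergodicity is equivalent to the existence of a finite positive $V$-invariant Borel measure on $Y$ not proportional to $\Omega$, so the goal is to extract one from a sequence of Seiberg-Witten solutions.

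First I would fix a Riemannian metric with $|V|=1$, let $\xi_V\subset TY$ be the oriented $2$-plane field orthogonal to $V$, and let $\frak{s}_V$ be the corresponding spin-c structure (Example~\ref{ex:SPF}). Choose a Hermitian line bundle $E\to Y$ with $c_1(\frak{s}_V)+2c_1(E)$ torsion, giving the spin-c structure $\frak{s}=\frak{s}_V\tensor E$ with spinor bundle ${\mathbb S}=E\oplus K^{-1}E$ as in \eqref{eqn:SEKE}. Pose perturbed Seiberg-Witten equations modeled on \eqref{eqn:PSW}, with the metric dual $V^\flat$ in place of $\lambda$ inside the curvature equation and with the perturbation $2$-form \eqref{eqn:mutaubes} modified by replacing $d\lambda$ by $\omega$ (still exact, so the associated Seiberg-Witten Floer homology is the standard one). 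Since Lemma~\ref{lem:trivsol}, Lemma~\ref{lem:PSF}, the spectral flow estimate of Proposition~\ref{prop:SF}, and the energy dichotomy Lemma~\ref{lem:ED} depend only on the formal structure of the equations and on spectral flow of Dirac operators--never on pointwise positivity $\lambda\wedge d\lambda>0$--they transfer verbatim. Applying Theorem~\ref{thm:nontrivial} to a nonzero class in $\widehat{HM}^*(Y,\frak{s})$ and running the dichotomy argument of \S\ref{sec:OTP} produces a sequence $r_n\to\infty$ of solutions $(A_n,\psi_n)$ with uniformly bounded energies $\mc{E}_n\eqdef i\int_Y\lambda\wedge F_{A_n}$; pairing the curvature equation with $\lambda$ and using $s_V\neq 0$ gives the matching lower bound $\mc{E}_n\geq c|s_V|$ for some $c>0$.

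The heart of the argument is the passage to a limit. The curvature $2$-currents $(2\pi)^{-1}iF_{A_n}$ are closed, represent $\op{PD}(c_1(E))\in H_2(Y;\Z)$, and have bounded mass, so a subsequence converges weakly to a nonzero closed $2$-current $T$ on $Y$. Adapting the local vortex analysis of \S\ref{sec:ERO} using the a priori estimates of Lemma~\ref{lem:APE}, one shows that $T$ is a \emph{foliation cycle} for the flow of $V$: $\imath_V T=0$ as a current, and $T$ is positive on transversals to $V$. By the Ruelle-Sullivan correspondence between such foliation cycles and transverse flow-invariant measures, $T$ determines a nonzero finite positive $V$-invariant Borel measure $\sigma$ on $Y$ satisfying $\int_Y\lambda\,d\sigma=\lim_n\mc{E}_n/(2\pi)>0$. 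It then suffices to exhibit $\sigma\not\propto\Omega$: the measure $c\Omega$ corresponds under Ruelle-Sullivan to the $2$-form $c\,\imath_V\Omega=c\omega$, which is \emph{exact} and so represents $0\in H_2(Y;\R)$, while $[T]=\op{PD}(c_1(E))$ can be chosen nonzero whenever $c_1(\xi_V)\neq 0$ in $H^2(Y;\R)$.

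The main obstacles lie in two places. First, verifying the foliation-cycle property of $T$ in the degenerate locus $\{\lambda(V)\leq 0\}$: the vortex model of \S\ref{sec:ERO} forces the support of $T$ onto honest $V$-trajectories in the contact case, but without pointwise positivity of $\lambda\wedge\omega$ one must instead show, via a more delicate compactness argument, that the limit current retains only transverse invariance (rather than full tangency to individual orbits). Second, handling the case $c_1(\xi_V)=0$ in $H^2(Y;\R)$, in which every allowed $c_1(E)$ is torsion and $[T]=0$: here the plan is to exploit the $U$-map action on $\widehat{HM}^*(Y,\frak{s})$ to produce two distinct Seiberg-Witten Floer classes of different gradings, and correspondingly two $V$-invariant measures $\sigma_1,\sigma_2$ whose total pairings with $\lambda$ are controlled by the Chern-Simons values of the two classes; by Proposition~\ref{prop:SF} these values grow at different rates in $r$, so $\sigma_1$ and $\sigma_2$ cannot both be proportional to $\Omega$, yielding the required non-proportional invariant measure and hence non-unique ergodicity.
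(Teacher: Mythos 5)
Your overall strategy---deform the Seiberg--Witten equations using $V$ in place of the Reeb vector field and extract an invariant measure from a sequence of solutions with $r\to\infty$---is the right one, but the route you take through the middle of the argument is not the paper's, and it contains a genuine gap. Taubes's proof, as sketched in \S\ref{sec:ue}, \emph{drops} hypothesis (2) of Theorem~\ref{thm:convergence} entirely: no uniform energy bound is sought, and the invariant measure is obtained as a weak limit of the \emph{normalized} measures $\sigma_n=\mc{E}(A_n)^{-1}r_n(1-|\alpha_n|^2)\Omega$, whose total mass is controlled by construction even if $\mc{E}(A_n)\to\infty$. Your plan instead insists on reproducing the energy dichotomy, and your justification---that Lemmas~\ref{lem:trivsol}, \ref{lem:PSF}, \ref{lem:ED} and Proposition~\ref{prop:SF} ``never depend on pointwise positivity $\lambda\wedge d\lambda>0$''---is false. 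The identity \eqref{eqn:ei}, $i\lambda\wedge F_A=|F_A|+O(1)$, which underlies the gauge-fixing estimate of Lemma~\ref{lem:gauge} and hence the Chern--Simons bound of Lemma~\ref{lem:cse}, is exactly the pointwise contact condition. In your setting the curvature equation gives $F_A\approx -ir(1-|\alpha|^2)\,\imath_V\Omega$, so $i\lambda\wedge F_A\approx r(1-|\alpha|^2)\,\lambda(V)\,\Omega$, and $\lambda(V)$ changes sign wherever $\lambda\wedge\imath_V\Omega\le 0$; thus $\mc{E}(A)$ is no longer comparable to $\int_Y|F_A|$, Lemma~\ref{lem:cse} fails, and the dichotomy argument of Lemma~\ref{lem:ED} collapses. (Your claimed lower bound $\mc{E}_n\ge c|s_V|$ fails for the same reason: pairing the curvature equation with $\lambda$ produces a signed integrand, and it gives nothing near a solution with $|\alpha|$ close to $1$.)

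The second gap is the non-triviality step. Your homological argument ($[T]=\op{PD}(c_1(E))\neq 0$ versus the exactness of $\imath_V\Omega$) works only when the relevant Chern class is non-torsion, as you acknowledge, and the proposed repair via the $U$-map and differing Chern--Simons growth rates is a sketch of a hope rather than an argument. In Taubes's proof the hypothesis $s_V\neq 0$ is what rules out $\sigma\propto\Omega$: one compares $\int_Y\lambda(V)\,d\sigma_n$, which is read off from $\mc{E}(A_n)^{-1}\,i\int_Y\lambda\wedge F_{A_n}$, against the value proportional to $\int_Y\lambda(V)\,\Omega=s_V\neq 0$ that unique ergodicity would force in the limit. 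So the two places you flag as ``obstacles'' are precisely where the missing ideas live: the first is resolved by giving up on closed orbits and on the energy bound and proving a measure-theoretic version of Theorem~\ref{thm:convergence} for the normalized densities $r_n(1-|\alpha_n|^2)\Omega$; the second is resolved by feeding $s_V$ into the limit measure itself rather than into the topology of $E$.
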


Taubes proves this similarly to the Weinstein conjecture.  One
considers the perturbed Seiberg-Witten equations \eqref{eqn:PSW} with
$\lambda$ replaced by $*d\lambda$, and proves a modification of
Theorem~\ref{thm:convergence} in which hypothesis (2) in that theorem
is dropped.  A nontrivial $V$-invariant measure is obtained as a limit
of a subsequence of the sequence of measures
\[
\sigma_n \eqdef \mc{E}(A_n)^{-1}r_n(1-|\alpha_n|^2)\Omega
\]
for an appropriately chosen sequence $(r_n,A_n,\psi_n)$.

\subsection{The Arnold chord conjecture}

There is also the following ``relative'' version of the Weinstein
conjecture.  Let $(Y,\xi)$ be a contact 3-manifold.  A {\em Legendrian
  knot\/} is a knot $L\subset Y$ such that $T_pL\subset \xi_p$ for
every $p\in L$.  Now choose a contact form $\lambda$ with
$\Ker(\lambda)=\xi$.  A {\em Reeb chord\/} of $L$ is a path
$\gamma:[0,T]\to Y$ for some $T>0$ such that $\gamma(0),\gamma(T)\in
L$ and $\gamma$ is a flow line of the Reeb vector field, ie
$\gamma'(t)=R(\gamma(t))$.  For example, any Legendrian knot $L$ in
$\R^3$ with the standard contact form \eqref{eqn:SCS} must have a Reeb
chord, because the projection of $L$ to the $x,y$ plane must have area
zero, so it must have a crossing.  A version of the Arnold chord
conjecture asserts that for any Legendrian knot in a closed 3-manifold
with a contact form, there exists a Reeb chord.  This has been proved
for the standard contact structure on $S^3$ \cite{mohnke}, and for
Legendrian unknots in tight contact 3-manifolds satisfying certain
assumptions \cite{abbas}, but it seems that not too much is known
about the general case.

\subsection{Higher dimensions}

In higher dimensions, although the Weinstein conjecture is known for
compact hypersurfaces of contact type in $\R^{2n}$ and for some other
cases, in general it is wide open.  The techniques used in Taubes's
proof are special to three dimensions.  In particular no good analogue
of the Seiberg-Witten invariants is currently known in dimensions
greater than four.  In addition there is no obvious higher-dimensional
analogue of embedded contact homology.  (In higher dimensions one
expects that generically all non-multiply-covered holomorphic curves
of the relevant index are embedded, compare \cite{oz}.)  One can still
use holomorphic curves in higher dimensions to define linearized
contact homology and related invariants from symplectic field theory
\cite{egh}.  It is unclear if these invariants are sufficient to prove
the Weinstein conjecture in all cases.  It is shown in \cite{ah} that
the Weinstein conjecture holds for ``PS-overtwisted'' contact
structures, which are a certain higher dimensional analogue of
overtwisted contact structures, cf \S\ref{sec:overtwisted}.  It is
also shown in \cite{bn} that symplectic field theory invariants
recover this fact.  In any case we will close with:

\begin{question}
Is there a proof of the Weinstein conjecture in three dimensions using
only holomorphic curves (and no Seiberg-Witten theory)?  What about in
higher dimensions?
\end{question}

\bibliographystyle{amsplain}

\end{document}